\numberwithin{equation}{section}
\let\cal\mathcal
\def\Bscr{{\cal B}}
\def\Dscr{{\cal D}}
\def\Escr{{\cal E}}
\def\Fscr{{\cal F}}
\def\Gscr{{\cal G}}
\def\Hscr{{\cal H}}
\def\Nscr{{\cal N}}
\def\Oscr{{\cal O}}
\def\Pscr{{\cal P}}
\def\Qscr{{\cal Q}}
\def\Sscr{{\cal S}}
\def\Tscr{{\cal T}}
\def\Uscr{{\cal U}}
\def\Vscr{{\cal V}}
\let\blb\mathbb
\def\CC{{\blb C}}
\def\FF{{\blb F}} 
\def\QQ{{\blb Q}}
\def \PP{{\blb P}}
\def \AA{{\blb A}}
\def \ZZ{{\blb Z}}
\def \NN{{\blb N}}
\def \RR{{\blb R}}
\def \HH{{\blb H}}
\def \LL{{\blb L}}
\let\oldmarginpar\marginpar
\def\marginpar#1{\setlength{\marginparwidth}{0.15\textwidth}\oldmarginpar{\tiny \baselineskip 0pt\lineskip 0pt\lineskiplimit 0pt  \raggedright #1}}
\def\perf{\operatorname{perf}}
\def\id{\text{id}}
\def\Id{\operatorname{id}}
\def\mod{\operatorname{mod}}
\def\Gr{\operatorname{Gr}}
\def\Gr{\operatorname{Gr}}
\def\coh{\mathop{\text{\upshape{coh}}}}
\def\GL{\operatorname {GL}}
\def\SL{\operatorname {SL}}
\def\SO{\operatorname {SO}}
\def\PGL{\operatorname {PGL}}
\def\PSL{\operatorname {PSL}}
\def\Ext{\operatorname {Ext}}
\def\Hom{\operatorname {Hom}}
\def\End{\operatorname {End}}
\def\RHom{\operatorname {RHom}}
\def\H{\operatorname {H}}
\def\End{\operatorname {End}}
\def\id{{\operatorname {id}}}
\def\rk{\operatorname {rk}}
\def\Pic{\operatorname {Pic}}
\def\r{\rightarrow}
\def\l{\leftarrow}
\DeclareMathOperator{\Aut}{Aut}
\DeclareMathOperator{\NS}{NS}
\DeclareMathOperator{\REnd}{REnd}
\DeclareMathOperator{\cone}{cone}
\newtheorem{lemma}{Lemma}[section]
\newtheorem{proposition}[lemma]{Proposition}
\newtheorem{theorem}[lemma]{Theorem}
\newtheorem{corollary}[lemma]{Corollary}
\newtheorem{observation}[lemma]{Observation}
\theoremstyle{definition}
\newtheorem{definition}[lemma]{Definition}
\theoremstyle{remark}
\newtheorem{remark}[lemma]{Remark}
\newdimen\uboxsep \uboxsep=1ex
\def\uboxn#1{\vtop to 0pt{\hrule height 0pt depth 0pt\vskip\uboxsep
\hbox to 0pt{\hss #1\hss}\vss}}
\def\uboxs#1{\vbox to 0pt{\vss\hbox to 0pt{\hss #1\hss}
\vskip\uboxsep\hrule height 0pt depth 0pt}}
\def\Stab{\operatorname{Stab}}
\def\ar{\operatorname{ar}}  
\def\Re{\operatorname{Re}}
\def\Im{\operatorname{Im}}
\def\Isom{\operatorname{Isom}}
\def\Sph{\operatorname{Sph}}
\title[Groups generated by spherical twists on K3 surfaces]{Groups generated by spherical twists on K3 surfaces and full exceptional collections on Fano threefolds}
\subjclass[2020]{14J45,  14J28,  14F08}
\keywords{Fano threefold, exceptional collection, K3 surface, group of derived autoequivalences}
\author{Anya Nordskova}
\author{Michel Van den Bergh}
\thanks{The second author is a senior researcher at the Research
  Foundation Flanders (FWO). Both authors are also supported by  the ERC grant SCHEMES}
\address[Anya Nordskova and Michel Van den Bergh]{Vakgroep Wiskunde, Universiteit Hasselt, Universitaire Campus \\
  B-3590 Diepenbeek}
\email{anya.nordskova@uhasselt.be}
\email{michel.vandenbergh@uhasselt.be}
\address[Michel Van den Bergh]{Vakgroep Wiskunde en Data Science, Vrije Universiteit Brussel, Pleinlaan 2, 1050 Brussel} 
\email{michel.van.den.bergh@vub.be}
\begin{document}
\begin{abstract} Let $Y$ be a smooth K3 surface of Picard rank $1$.  We prove that the subgroup $\Gscr$ of $\Aut \Dscr^b(Y)$ generated by spherical twists with respect to all spherical objects is free.  Moreover,  we provide a precise recipe to find free generators of $\Gscr$ and determine the cases when $\Gscr$ is finitely generated, depending on the degree of $Y$.  This description in particular yields a precise classification of spherical objects in $\Dscr^b(Y)$. 

We apply these results to verify the first three-dimensional case of a conjecture due to Bondal and Polishchuck, namely,  we establish the transitivity of the braid group action on full exceptional collections for Fano threefolds of Picard rank 1. 
\end{abstract}
\maketitle
 \tableofcontents 
\section{Introduction} 
Let $Y$ be a smooth K3 surface of Picard rank $1$.  One of the main results of the groundbreaking work
of Bridgeland \cite{BridgelandK3} and Bayer-Bridgeland \cite{BayerBridgeland} (also building on some preceding work \cite{HMS,Kawatani,Orlov97}) is the existence of a short exact sequence of groups (see Theorem \ref{thm:mainBB})
 \begin{equation}
\label{eq:mainBBintro}
 1 \to  \Aut^0 \Dscr^b(Y) \to \Aut \Dscr^b(Y) \to \Aut^+ \H^*(Y,\ZZ)  \to 1,
\end{equation}
where $\Aut^+ \H^*(Y,\ZZ)$ is the group of Hodge isometries of $\H^*(Y,\ZZ)$ which preserve the orientation of the positive $4$-planes and $\Aut^0\Dscr(Y)$
is the product of the group generated by the double shift $[2]$ and the group
freely generated by the squares of spherical twists $T_{S}^2$ with $S$ running over all spherical vector bundles on $Y$.  The sequence \eqref{eq:mainBBintro} is obtained as a corollary of a highly non-trivial theorem on the contractibility of $\Stab^\dagger(Y)$,  a distinguished component of the moduli space of stability conditions,  which carries a natural $\Dscr^b(Y)$-action.  
\medskip

As our first main result, we refine Bayer-Bridgeland's sequence obtaining a more precise description of $\Aut \Dscr^b(Y)$ as well as a more precise classification of spherical objects in $\Dscr^b(Y)$.  Let us now elaborate on this. 
\medskip

It is well known that K3 surfaces of Picard rank $1$ have a single discrete invariant, the minimal degree $d=2\delta$ of an ample divisor. 
In this introduction
we will assume $d\ge 4$ for simplicity and refer to the text for the case $d=2$.   First note that the algebraic cycles 
provide us with a subgroup of rank $3$ in $\H^\ast(Y,\ZZ)$
\[
\Nscr(Y):=\H^0(Y,\ZZ)\oplus \Pic(Y)\oplus \H^4(Y,\ZZ)\cong \ZZ^3
\]
which is equipped with the Mukai quadratic form $q(r,l,s)=\delta l^2-rs$. We define $\Aut^+ \Nscr(Y)$ as the isometries
of $\Nscr(Y)$ which preserve the orientation of the positive 2-planes. It turns out that there is a well-defined restriction map
\begin{equation}
\label{eq:resintro}
\Aut^+ \H^\ast(Y,\ZZ)\r \Aut^+ \Nscr(Y)
\end{equation}
which is injective (assuming $d\ge 4$, see Lemma \ref{lem:HvsN}).  The following precise descriptions of the groups $\Aut^+ \Nscr(Y)$ and $\Aut^+ \H^*(Y, \ZZ)$ are well known to experts and go back to Dolgachev \cite{Dolgachev} and Kawatani \cite{Kawatani14}.  For the benefit of the reader, we give independent proofs of these results in the Appendix (see \S \ref{sec:autN}, \S \ref{sec:autH}).  Recall that for $\delta \geq 1$ the  \emph{Fricke group} $F(\delta)$ is obtained by adjoining the \emph{Fricke involution} $f_\delta=\left(\begin{smallmatrix} 0 &-1\\ \delta &0\end{smallmatrix}\right)$ to the standard \emph{congruence subgroup}
\[
\Gamma_0(\delta):=\left\{{\footnotesize\begin{pmatrix} a&b\\ c&d\end{pmatrix}} \in \PSL_2(\ZZ)\mid c\cong 0\mod \delta\right\}\subset \PSL_2(\RR),
\]
while the \emph{Atkin-Lehner} group, denoted $\Gamma^+_0(\delta)$,  is obtained from $\Gamma_0(\delta)$ by adjoining the ``Atkin-Lehner elements'',  of which $f_\delta$ is a special case (see \S\ref{sec:fuchsian}).  In particular, $F(\delta) \subset \Gamma^+_0(\delta)$.
\begin{proposition}[{\cite[Theorem 7.1,  Remark 7.2.2]{Dolgachev}, \cite[Theorem 3.3]{Kawatani14}},  see also Remark \ref{rem:pm1}, Proposition \ref{prop:imHN}]  Assume $d\ge 4$. We have the isomorphisms
\begin{equation}
\label{eq:FGammaintro}
\begin{aligned}
\Aut^+ \H^*(Y,\ZZ)/\{\pm 1\}& \cong F(\delta)\\
\Aut^+ \Nscr(Y)/\{\pm 1\}&\cong \Gamma^+_0(\delta)
\end{aligned} 
\end{equation}
\end{proposition}
One has $F(\delta)/\Gamma_0(\delta)\cong\ZZ/2\ZZ$ and $\Gamma^+_0(\delta)/\Gamma_0(\delta)=(\ZZ/2\ZZ)^c$ where $c$ is the number of distinct prime factors in $\delta$.
Hence \eqref{eq:resintro} is in general not surjective.  The first counterexample is given by $\delta=6$, i.e.\ $d=12$.

The groups that appear in \eqref{eq:FGammaintro} 
are examples of so-called Fuchsian groups,  that is,  they are discrete subgroups of the group of orientation preserving isometries of the complex upper half-plane $\HH$ on which they act by
\emph{M\"obius transformations} ${z\mapsto (az+b)/(cz+d)}$.  It was observed already by Kawatani \cite{Kawatani} that $\Stab^\dagger(Y)$, a distinguished component of the space of stability
conditions on $\Dscr^b(Y)$,  is closely related to the hyperbolic plane $\HH$.  On the other hand,  as mentioned above,  $\Stab^\dagger(Y)$ is equipped with a natural action of $\Aut \Dscr^b (Y)$ and the existence of the exact sequence \eqref{eq:mainBBintro} ultimately boils down to the fact that $\Stab^\dagger(Y)$ is simply connected,  which was originally conjectured in \cite{BridgelandK3} and proved in \cite{BayerBridgeland}.  Leveraging this together with \eqref{eq:FGammaintro} we obtain the following,  which was already observed by Fan and Lai in \cite{FanLai} (see (1.2) on p.4). 
\begin{observation}[Lemma \ref{lem:tautological2}]  Let $Y$ be a K3 surface of Picard rank $1$ and degree $d = 2 \delta \ge 4$. Let $\Delta^+(Y)=\{v = (r,l,s)\in \Nscr(Y)\mid q(v)-1, r>0\}$ and let $\HH^0$ be the complement  in $\HH$ of the points 
$(l\delta +i\sqrt{\delta})/(r\delta)$ for $(r,l,s)\in \Delta^+(Y)$. Then 
\[
\Aut \Dscr^b(Y)/[1]\cong \pi_1([\HH^0/F(\delta)])
\]
where $[\HH^0/F(\delta)]$ is the orbifold quotient of $\HH^0$ by $F(\delta)$.
\end{observation}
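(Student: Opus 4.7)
The plan is to identify $\Stab^\dagger(Y)/\CC$ with a space homotopy-equivalent to the universal cover of $\HH^0$, and then realise $[\HH^0/F(\delta)]$ as an orbifold quotient by the group $\Aut \Dscr^b(Y)/[1]$. Concretely, by \cite{BayerBridgeland}, the map $\Stab^\dagger(Y) \to \Nscr(Y)\otimes\CC$ induced by the central charge is an $\Aut^0 \Dscr^b(Y)$-Galois (in fact, universal) cover onto the period domain minus walls $\mathcal{P}^+_0(Y) := \mathcal{P}^+(Y) \setminus \bigcup_{\alpha \in \Delta^+(Y)} \{Z(\alpha) = 0\}$. The $\GL_2^+(\RR)$-invariant projection $\mathcal{P}^+(Y) \to \mathcal{P}^+(Y)/\GL_2^+(\RR) \cong \HH$ (sending $Z$ to the positive $2$-plane it spans) can be computed via the twistor parametrisation $Z_z = (1,z,\delta z^2)$ of $\{q(Z) = 0\} \subset \PP(\Nscr(Y)\otimes\CC)$, which meets each $\GL_2^+(\RR)$-orbit in a single $\CC^\ast$-orbit. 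Solving $\langle Z_z,\alpha\rangle = 0$ for $\alpha=(r,l,s)\in\Delta^+(Y)$ yields exactly the punctures $(l\delta + i\sqrt\delta)/(r\delta) \in \HH$, so the composite $\Stab^\dagger(Y) \to \HH$ has image $\HH^0$.

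Next, since $\mathcal{P}^+_0(Y) \to \HH^0$ is a principal $\GL_2^+(\RR)$-bundle and $\Stab^\dagger(Y) \to \mathcal{P}^+_0(Y)$ is the universal cover, the composite $\Stab^\dagger(Y) \to \HH^0$ is (trivialisably) a principal $\widetilde{\GL}_2^+(\RR)$-bundle over the universal cover $\widetilde{\HH^0}$ of $\HH^0$. The $\CC$-action on $\Stab^\dagger(Y)$ corresponds under this picture to the inclusion $\CC \hookrightarrow \widetilde{\GL}_2^+(\RR)$ of the centre, with $[1] \in \Aut \Dscr^b(Y)$ identifying with $1 \in \CC$. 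Since $\widetilde{\GL}_2^+(\RR)/\CC \cong \HH$ is contractible, the projection $\Stab^\dagger(Y)/\CC \to \widetilde{\HH^0}$ is a homotopy equivalence; in particular $\Stab^\dagger(Y)/\CC$ is simply connected.

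Finally, the $\Aut \Dscr^b(Y)$- and $\CC$-actions on $\Stab^\dagger(Y)$ commute, with $\Aut \Dscr^b(Y) \cap \CC = \langle[1]\rangle$, so $\Aut \Dscr^b(Y)/[1]$ acts on $\Stab^\dagger(Y)/\CC$. The surjection $\Aut \Dscr^b(Y)/[1] \twoheadrightarrow F(\delta)$ coming from \eqref{eq:mainBBintro} and the Fricke isomorphism \eqref{eq:FGammaintro} has kernel $\Aut^0 \Dscr^b(Y)/\langle[2]\rangle$, the free group on the $T_S^2$ for spherical vector bundles $S$; under the homotopy equivalence above this kernel acts as the deck group of $\widetilde{\HH^0} \to \HH^0$. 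Therefore $[(\Stab^\dagger(Y)/\CC)/(\Aut \Dscr^b(Y)/[1])] \cong [\HH^0/F(\delta)]$ as orbifolds, and the simple-connectedness of $\Stab^\dagger(Y)/\CC$ combined with orbifold covering-space theory delivers $\pi_1([\HH^0/F(\delta)]) \cong \Aut \Dscr^b(Y)/[1]$.

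The main obstacle will be matching the deck-group generators of $\widetilde{\HH^0} \to \HH^0$ with the spherical twists $T_S^2$: this amounts to identifying the monodromy of $\Stab^\dagger(Y) \to \HH^0$ around a small loop encircling a wall, which is classically $T_S^2$ by Seidel--Thomas. A secondary subtlety is the orbifold structure at the elliptic fixed points of $F(\delta)$ in $\HH^0$, which must correspond to genuine stabilisers of the $\Aut \Dscr^b(Y)/[1]$-action on $\Stab^\dagger(Y)/\CC$.
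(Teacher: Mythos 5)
Your argument is correct and follows essentially the same route as the paper's proof of Lemma \ref{lem:tautological2}: both descend the contractible space $\Stab^\dagger(Y)$ along the $\widetilde{\GL}_2^+(\RR)$-direction (you quotient by $\CC$ and observe the residual $\HH$-fibre is contractible, the paper quotients by all of $\widetilde{\GL}_2^+(\RR)$) to obtain a simply connected space sitting over $\HH^0$, identify the punctures via the formula $(l\delta+i\sqrt{\delta})/(r\delta)$, and read off the orbifold fundamental group of the quotient by $\Aut \Dscr^b(Y)/[1]$ using \eqref{eq:mainBBintro} together with $\Aut^+\H^*(Y,\ZZ)/\{\pm 1\}\cong F(\delta)$. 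The only slip is the parenthetical claim that $\CC$ is the centre of $\widetilde{\GL}_2^+(\RR)$ (the centre of $\GL_2^+(\RR)$ is the real scalars), but all your argument actually uses is that $\widetilde{\GL}_2^+(\RR)/\CC\cong\HH$ is contractible, which is true.
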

$[\HH^0/F(\delta)]$ is an orbifold Riemann surface, so in particular its fundamental group is well understood (see \S\ref{subsec:prelimrs}). This yields a good understanding
of the group $\Aut \Dscr^b(Y)/[1]$. A similar approach can be applied to subgroups of $\Aut \Dscr^b(Y)/[1]$ that contain $\Aut^0 \Dscr^b(Y)/[2]$ such as  (the image of) the group 
\[
\Gscr = \langle T_S \ | \ S  \in \Dscr^b(Y) \text{ spherical} \rangle \subset \Aut \Dscr^b(Y)
\]
 generated 
by all spherical twists.  It turns out that the image $G$ of $\Gscr$ in $\Aut^+ \H^*(Y, \ZZ) / \{ \pm 1 \} = F(\delta)$ acts freely on $\HH^0$ (see Lemma \ref{lem:fuchs}\eqref{it:fuchs:it1}),  so 
that $[\HH^0/G]=\HH^0/G$ is a (non-compact) Riemann surface. 
Hence $\Gscr=\pi_1([\HH^0/G])$ is actually a free group.  Elaborating on this leads to our first main result.
\begin{theorem}[Theorem \ref{thm:sphtwistfree}]\label{thm:introsphtwistfree} Let $Y$ be a K3 surface of Picard rank $1$. 
Then the subgroup $\Gscr$ of $\Aut \Dscr^b(Y)$ generated by spherical twists is freely generated by a collection of spherical 
twists associated to spherical vector bundles. These vector bundles are orbit representatives for the action of $\Gscr$ on the set of all spherical objects in $\Dscr^b(Y)$. 
\end{theorem}
The actual result in the text is more detailed and in fact gives a precise recipe to find the generators of $\Gscr$ in terms of a suitable fundamental domain for the action of $G$ on $\HH$.  We apply this recipe explicitly to some low-degree K3 surfaces in \S\ref{sec:examples},  in particular for those that appear as anticanonical sections of Fano threefolds of Picard rank $1$ possessing a full exceptional collection.  These examples are of particular interest to us and we will discuss them in detail below.  In these cases we show that the group $\Gscr \cong F_4$  is freely generated by four elements, which may suggest the optimistic conjecture that $\Gscr$ is always finitely generated.  This is far from the case,  however,  and in fact we derive a complete (finite) list of degrees $d$ for which it is true: 

\begin{theorem}[Theorem \ref{thm:fingendeg}] Let $Y$ be a K3 surface of Picard rank $1$ and degree $d = 2\delta$.  Let
  $\Gscr$ be the group generated by all spherical
  twists. Then $\Gscr$ is finitely generated if and only if $\delta \in \{1,2,3,5,7,11,17,19,23,29,31,41,47,59,71\}$.  \end{theorem}
  
In \S\ref{sec:examples} we consider in particular the smallest infinitely generated example,  namely,  $d=8$.  In that case it turns out that $\Gscr$ is freely generated by $T_{\Oscr(nH)}$, $n \in \ZZ$,  where $H$ is a generator of $\Pic(Y)$. 

\subsection{Application to Bondal-Polishchuk's conjecture}

Bondal and Polishchuk conjectured \cite[Conjecture 2.2]{BondalPolishchuk} that the action of 
$B_n \ltimes \ZZ^n$,  where $B_n$ is the $n$-strand braid group, by mutations and shifts is transitive on full exceptional collections of length $n$ for any triangulated category.  This conjecture has been recently disproved by Chang,  Haiden and Schroll \cite{CHS} who found a series of partially wrapped Fukaya categories producing counterexamples.
Restricting to bounded derived categories $\Dscr^b(X)$ of smooth projective varieties $X$, the conjecture has been verified in a small number of special cases,  namely,  by Rudakov \cite{Rudakov1}, \cite{Rudakov2} for $\PP^2$ and $\PP^1 \times \PP^1$,  by Kuleshov and Orlov \cite{KuleshovOrlov} for all del Pezzo surfaces and by Ishii,  Okawa and Uehara \cite{IshiiOkawaUehara} for the second Hirzebruch surface $\FF_2$.  However,  in general the question is still widely open.  

\medskip

As an application of our result on the groups generated by spherical twists on K3 surfaces,  we settle the first three-dimensional case of Bondal-Polishchuk's conjecture.  Namely,  consider threefolds with a full exceptional collection of four vector bundles.  It turns out that such a threefold $X$ is automatically Fano and has one of the four types ${X = \PP^3, Q_3, V_5}$ and $V_{22}$,  where only $V_{22}$ has non-trivial moduli (see Lemma \ref{lm:4Fanos} and \S\ref{sec:examples} for more details).
Since $\rk K_0(X)=4=\dim X+1$,  it follows from \cite{MR992977}
(see Proposition \ref{pr:braidaction})
that the full $B_n \ltimes \ZZ^n$-orbit of the given full exceptional collection consists of full exceptional collections of vector bundles,  up to shifts.  A converse to this result was obtained by Polishchuk \cite{Polishchuk}: he showed that the action of $B_4 \ltimes \ZZ^4$ on full exceptional collections of shifted vector bundles is transitive.  However,  even for $\PP^3$ it was unknown if these constitute all full exceptional collections.  The following result settles this:

\begin{theorem}[Theorem \ref{thm:main}, Corollary \ref{cor:allsheaves}] \label{thm:main1} Let $X$ be a threefold with a full exceptional collection of four vector bundles.
Then:
\begin{enumerate}
\item \label{thm:main1:it1}
all full exceptional collections on $X$ consist of shifted vector bundles;
\item \label{thm:main1:it2}
the action of $B_4 \ltimes \ZZ^4$ by mutations and shifts is transitive and free. 
\end{enumerate}
\end{theorem} Note that in addition to the transitivity conjectured by Bondal and Polishchuk,  we also prove that the action is free. 
By Polishchuk's result,  \eqref{thm:main1:it1} implies the transitivity part of \eqref{thm:main1:it2}.
However,  we actually prove \eqref{thm:main1:it2} first using a new approach detailed below and then we deduce \eqref{thm:main1:it1} from it. 

\begin{remark} We remark that Theorem \ref{thm:main1}\eqref{thm:main1:it1} does not rule out the existence of exceptional objects in $\Dscr^b(X)$ which are not shifts of vector bundles, nor does it yield a classification of exceptional vector bundles on $X$, since even for $\PP^3$ it is unknown whether every exceptional vector bundle can be included in a full exceptional collection.  
\begin{remark} 
We should also stress that all exceptional collections in Theorem \ref{thm:main1} are assumed to be full.  Even for $\PP^3$ it is currently unknown whether every exceptional collection of length $4$ is full.  Recently an example of a rational surface with a non-full exceptional collection of maximal length was discovered by Krah \cite{Krah}.  \end{remark}
\end{remark}
\begin{remark} A general result by Positselski \cite{Positselski} states that if  $\rk K_0(X)=\dim X+1$ then every full strong exceptional collection 
(see Definition \ref{def:exceptional}) on $X$ consists of shifts
of vector bundles. So to prove Theorem \ref{thm:main1} it would be enough to show that every full exceptional collection is strong. We do not know how to do this directly however.
\end{remark}

Let us now elaborate on the way Theorem \ref{thm:introsphtwistfree} is applied to prove Theorem \ref{thm:main1}\eqref{thm:main1:it2}.  Our proof starts from the following elementary yet crucial observation.  Recall that if $H$ is a group then the \emph{Hurwitz action} of $B_n$ on $H^n$ is  given by (see \S\ref{subsec:hurwitz}):
\[
\sigma_i(g_1,\ldots,g_i,g_{i+1},\ldots, g_n)=(g_1,\ldots,g_i g_{i+1}g_i^{-1},g_i,\ldots, g_n)
\] 
Let $X$ be a Fano threefold of Picard rank $1$ and let $Y$ be a generic anticanonical divisor $Y\subset X$.  Then $Y$ is a K3 surface of Picard rank $1$.  The restriction of an exceptional object $E$ on~$X$ to $Y$ is a spherical object $E \vert_Y$.  Moreover,  if $X$ has a full exceptional collection,  then it turns out that the group $\Gscr \subset \Aut \Dscr^b(Y)$ generated by spherical twists and the Hurwitz action appear naturally: 
\begin{observation}[Lemma \ref{eq:twistmutation}]  \label{lem:twistmutation:intro}
Let $E$ be an exceptional object on $X$ and let $\phi(E):=T_{E\vert_Y}\in \Aut \Dscr^b(Y)$ be the spherical twist associated to the spherical object $E\vert_Y$. 
When extended to sequences of objects in $\Dscr^b(X)$, the map
$\phi$ intertwines the action of the braid group $B_n$ on the set of full exceptional collections in $\Dscr^b(X)$ and the Hurwitz action of $B_n$ on $\Gscr^n$. 
\end{observation}

\medskip

Theorem \ref{thm:introsphtwistfree} claims that the group $\Gscr$ in Observation \ref{lem:twistmutation:intro} is free.  More precisely,  it specialises to the following: 

\begin{theorem}[Theorem \ref{thm:examples}] \label{thm:examplesintro} Let $X$ be a threefold with a full exceptional collection of $4$ vector bundles,  namely,  ${X  = \PP^3, Q_3, V_5}$ or $V_{22}$. Let $Y \subset X$ be a generic anticanonical divisor which is a K3 surface of Picard rank $1$ and degree $4,6,10$ and $22$ respectively.  Then the group $\Gscr\subset \Aut \Dscr^b(Y)$
generated by all spherical twists is isomorphic to the free group $F_4$ on $4$ generators which are the spherical twists along the spherical vector bundles obtained by restricting a full exceptional collection of vector bundles on $X$ to $Y$.  
\end{theorem}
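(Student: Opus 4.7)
The plan is to specialize the general framework developed in the paper, in particular Theorem \ref{thm:sphtwistfree}, to the four K3 surfaces at hand. By Lemma \ref{lem:twistmutation:intro}, restricting any full exceptional collection of vector bundles $(E_1, \ldots, E_4)$ on $X$ to $Y$ yields four spherical vector bundles $E_i|_Y$ on $Y$, hence four elements $T_i := T_{E_i|_Y} \in \Gscr$. It therefore suffices to show that the $T_i$ form a free generating set of $\Gscr$.

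First, Theorem \ref{thm:sphtwistfree} already tells us that $\Gscr$ is free, generated by spherical twists along spherical vector bundles, so the problem reduces to determining the rank of $\Gscr$ and matching a rank-four basis to the $T_i$. Via Lemma \ref{lem:tautological2}, $\Gscr/[2]$ is identified with the fundamental group of the Riemann surface $\HH^0/G$, where $G$ is the image of $\Gscr$ in $F(\delta)$ and acts freely on $\HH^0$ by Lemma \ref{lem:fuchs}\eqref{it:fuchs:it1}. Hence $\Gscr$ is free of rank $2g + n - 1$, where $g$ and $n$ are the genus and the number of ends of $\HH^0/G$, and one needs to verify that this count equals $4$ in each case.

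The main computational task is then, for each $\delta \in \{2, 3, 5, 11\}$ (corresponding to $X = \PP^3$, $Q_3$, $V_5$, $V_{22}$), to construct an explicit fundamental domain for $G \subset F(\delta)$ on $\HH$. Using the Fricke/Atkin-Lehner presentation of $F(\delta)$ given by \eqref{eq:FGammaintro} together with a description of the $G$-orbits on the set of punctures $\Delta^+(Y)$, one reads off a free generating set of $\pi_1(\HH^0/G)$ from the side-pairings of the domain. For these small values of $\delta$ the count $2g + n - 1$ is expected to equal $4$, and the four resulting generators can be lifted to $\Gscr$ and compared with the $T_i$ by computing their images in $F(\delta)$ via the restriction map on Mukai lattices and invoking the explicit recipe alluded to before the statement.

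The principal obstacle is carrying out this fundamental-domain analysis case by case and verifying that the generators extracted from the geometry coincide, up to elements of the kernel of $\Aut \Dscr^b(Y) \to F(\delta)$, with the twists along the restrictions of standard FECs on $\PP^3$, $Q_3$, $V_5$, and $V_{22}$. The case of $V_{22}$ is likely the most delicate, since $\delta = 11$ is the largest value and $V_{22}$ carries non-trivial moduli; the smaller cases $\delta = 2, 3, 5$ should reduce to explicit computations within well-understood modular curves.
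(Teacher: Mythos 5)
This is essentially the paper's approach: both reduce, via Theorem \ref{thm:sphtwistfree}, to a case-by-case analysis of the reflection group $G\subset\Gamma_0^+(\delta)$ for $\delta\in\{2,3,5,11\}$ (fundamental domain, orbit representatives for the roots, comparison with the Mukai vectors of the restricted exceptional bundles). The one ingredient your outline leaves unexplained is how to obtain the ``description of the $G$-orbits on $\Delta^+(Y)$'' and the index of $G$ in $\Gamma_0^+(\delta)$: the paper derives these from the known signatures of $\Gamma_0^+(\delta)$ (genus $0$, listed elliptic points, one cusp) combined with Proposition \ref{pr:compserre}, which gives the relation $T_1T_2T_3T_4=-\otimes i^*\omega_X[2]$ and thereby identifies the relevant elliptic elements (e.g.\ $(s_0t)^4=1$ for $\PP^3$, forcing $G$ to be the index-$4$ kernel of $\ZZ/2\ast\ZZ/4\twoheadrightarrow\ZZ/4$ and hence $(\ZZ/2)^{\ast 4}$); without this input the verification that the orbit count is exactly $4$ in each case, and that the generators are the restricted bundles, does not close.
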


To summarise,  the action of the braid group $B_4$ on the set of full exceptional collections in $\Dscr^b(X)$ is closely related to the Hurwitz action of $B_4$ on $F_4^4$,  a power of a free group.  The latter action can be understood well abstractly \cite{NVdB}.  Note that in principle restricting to an anticanonical divisor looses some information.  However this turned out to be remedied by the fact that we are considering full exceptional collections and not individual exceptional objects.  We obtain Theorem \ref{thm:main1}\eqref{thm:main1:it2} from Theorem \ref{thm:examplesintro} and the fact that the product of the spherical twists associated to a full exceptional collection
only depends on the Serre functor on $X$ (this is a special case of \cite[Theorem A.1]{KuznetsovPerry}): 
\begin{proposition}[Proposition \ref{pr:compserre}]  Let $(E_1,\cdots, E_n)$ be a full exceptional collection in a Fano variety $X$ and let $i:Y\hookrightarrow X$ be a smooth anticanonical divisor.
Let $T_i :=T_{E_i\vert_Y}$.
One has  $ T_1 \dots T_n(-) =  - \otimes i^* \omega_X [2]$
in $\Aut \Dscr^b(Y)$ 
 \end{proposition}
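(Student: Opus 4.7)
The plan is to identify the autoequivalences $\Phi := T_1\circ\cdots\circ T_n$ and $\Psi := (-\otimes i^*\omega_X)[2]$ by showing their Fourier--Mukai kernels on $Y\times Y$ are isomorphic. Setting $S_j := i^*E_j$, the kernel of $\Phi$ is the convolution $K_{S_1}\ast\cdots\ast K_{S_n}$ of the spherical twist kernels $K_{S_j}=\cone(S_j^\vee\boxtimes S_j\to \Oscr_{\Delta_Y})$, while the kernel of $\Psi$ is $(\Delta_Y)_*i^*\omega_X\,[2]$.

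The strategy combines two inputs. The first is the Bondal--Kapranov resolution of the diagonal on $X\times X$ associated to the full exceptional collection $(E_1,\ldots, E_n)$ and its right dual collection $(F_1,\ldots, F_n)$: it presents $\Oscr_{\Delta_X}$ as a complex whose graded pieces are tensor products $E_j\boxtimes F_j^\vee$, up to shifts. The second is excess intersection applied to the fibre square
\[
\begin{tikzcd}
\Delta_Y \arrow[r] \arrow[d] & \Delta_X \arrow[d] \\
Y\times Y \arrow[r, "i\times i"'] & X\times X
\end{tikzcd}
\]
Since $Y$ is an anticanonical divisor, the excess conormal bundle of $\Delta_Y \subset \Delta_X$ is the line bundle $i^*\omega_X$, so the excess intersection formula yields
\[
(i\times i)^*\Oscr_{\Delta_X}\;\cong\;\Oscr_{\Delta_Y}\oplus (\Delta_Y)_*i^*\omega_X\,[1].
\]

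The core of the proof is then to identify the convolution $K_{S_1}\ast\cdots\ast K_{S_n}$ with the pullback of the Bondal--Kapranov resolution of $\Oscr_{\Delta_X}$ under $(i\times i)^*$, in such a way that the trivial summand $\Oscr_{\Delta_Y}$ in the above decomposition cancels while the excess summand picks up an additional shift. Expanding each $K_{S_j}$ via its defining cone, the mixed terms of the iterated convolution should reassemble, using the semi-orthogonality of the $E_j$ on $X$ combined with the Koszul-type identity $i^*i_*\cong\mathrm{id}\oplus(-\otimes i^*\omega_X)[1]$, into the image of the pulled-back diagonal resolution. The main obstacle is precisely this identification: tracking the iterated cones so that the trivial summand cancels and the extra shift $[1]$ contributed by the defining cones of the twists combines with the excess $[1]$ from the anticanonical divisor to produce the correct total shift $[2]$, yielding $(\Delta_Y)_*i^*\omega_X\,[2]$ as the kernel of $\Phi$.
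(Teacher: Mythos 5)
Your setup is sound as far as it goes: the excess intersection computation $(i\times i)^*\Oscr_{\Delta_X}\cong \Oscr_{\Delta_Y}\oplus (\Delta_Y)_*i^*\omega_X[1]$ is correct (the conormal of $Y$ is $i^*\omega_X$ since $Y$ is anticanonical), and the cone of the counit $(i\times i)^*\Oscr_{\Delta_X}\to\Oscr_{\Delta_Y}$ is indeed $(\Delta_Y)_*i^*\omega_X[2]$, which is the kernel of the target autoequivalence. But the heart of the proposal --- identifying the iterated convolution $K_{S_1}\ast\cdots\ast K_{S_n}$ with the cone of $(i\times i)^*\Oscr_{\Delta_X}\to\Oscr_{\Delta_Y}$ via the Bondal--Kapranov resolution --- is exactly the statement to be proved, and you explicitly defer it (``the main obstacle is precisely this identification''). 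As written this is a plan, not a proof. Two concrete difficulties stand in the way. First, convolution of kernels does not commute with pullback along $i\times i$ (the middle factor in the triple product changes from $X$ to $Y$), so the kernel of $T_{S_j}$ is \emph{not} $(i\times i)^*$ of the kernel of the mutation functor $L_{E_j}$ on $X$, and ``pulling back the diagonal resolution'' does not directly produce the convolution you need. Second, ``reassembling the mixed terms of the iterated cones'' is a Postnikov-system argument; cones are not functorial in a triangulated category, so this bookkeeping requires either a dg-enhancement or a careful convolution-of-complexes argument that you have not supplied.

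The paper's proof supplies precisely the missing ingredient by working at the level of spherical functors: writing $A=\REnd_X(\bigoplus_k E_k)$, it invokes Barbacovi's theorem on compositions of spherical twists (together with Lemma \ref{eq:samehom}) to identify $T_1\cdots T_n$ with the twist along the spherical functor $-\otimes^{\LL}_A\overline{\Escr}\colon\Dscr^b(A)\to\Dscr^b(Y)$, which after composing with the tilting equivalence $\Dscr^b(X)\simeq\Dscr^b(A)$ is the twist along $i^*$; the formula $T_{i^*}(\Fscr)=\cone(i^*i_*\Fscr\to\Fscr)=\Fscr\otimes\Oscr_Y(-Y)[2]$ then finishes the argument. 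Your kernel-level computation of $T_{i^*}$ agrees with this last step, so your proposal would become a proof if you either cite a result of Barbacovi/Halpern-Leistner--Shipman type for the first identification, or genuinely carry out the convolution of the $n$ twist kernels (in the style of Canonaco--Karp) with the enhancement issues addressed.
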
 
Combining this with our abstract result concerning the Hurwitz action on powers of free groups (\cite[Theorem 2.6(1)]{NVdB},  see Theorem \ref{thm:hurwitz_orbit}) we are able to deduce that any exceptional collection can be mutated
into one furnishing a chosen set of generators for $\Gscr$ (see Lemma \ref{lm:freegrp_implies_transitive}). This essentially finishes the proof of Theorem \ref{thm:main1}\eqref{thm:main1:it2}. 

\section{Acknowledgement}
We are very grateful to Sasha Kuznetsov and Tom Bridgeland for  the wealth of helpful comments and corrections on an earlier version of the text.  We also thank Tom Bridgeland for promptly answering our questions concerning \cite{BayerBridgeland} and Daniel Huybrechts for pointing out important missing references.  
The first author is also greatly indebted to Alexey Bondal for introducing her to the crucial results of \cite{BayerBridgeland}. 
\section{Generalities} 
We work over the complex numbers.  Unless otherwise specified $X$ is
a smooth projective variety.  We denote by $\coh(X)$ the category of
coherent sheaves on~$X$ and by $\Dscr^b(X) := \Dscr^b(\coh(X))$ the
bounded derived category of $\coh(X)$.  A Fano variety is a smooth
projective variety $X$ whose anticanonical divisor $-K_X$ is ample.
By a K3 surface $Y$ we mean a complex algebraic surface with trivial
canonical bundle $\omega_Y \simeq \Oscr_Y$ and $\H^1(Y, \Oscr_Y) = 0$.
We refer the reader to \cite{Huybrechts} for a comprehensive account
on K3 surfaces.

\subsection{Exceptional collections and Fano threefolds}

\begin{definition} \label{def:exceptional}
An object $E \in \Dscr^b(X)$ is \emph{exceptional} if $\Hom^i(E,E) = 0$ for any $i \neq 0$ and $\Hom^0(E,E) \cong \CC$.  An ordered collection $\Escr = (E_1, \dots,E_m)$ of objects of $\Dscr^b(X)$ is called an \emph{exceptional collection} if each $E_i$ is exceptional and $\Hom^\bullet(E_j,E_k) = 0$ for all $j>k$.  An exceptional collection $\Escr$ is \emph{full} if $\Dscr^b(X)$ is the smallest triangulated subcategory of $\Dscr^b(X)$ containing $E_1, \dots,E_m$.  We say that $\Escr$ is \emph{strong} if $\Hom^k(E_i,E_j) = 0$ for any $k \neq 0$. 
\end{definition} 

\begin{definition} Let $(E,F)$ be an exceptional pair.  We define the left (respectively right) \emph{mutation} of $(E,F)$ as $(L_E F,  E)$ (respectively $(F, R_F E)$),  where the objects $L_E F$ and $R_F E$ are given by the distinguished triangles
\begin{align*}
\Hom^\bullet(E,F) \otimes E \to F\to L_E F \\
R_F E \to E \to \Hom^\bullet(E,F)^\ast \otimes F 
\end{align*}

\end{definition} 
\begin{proposition}[{Bondal, \cite[Assertion 2.3]{MR992977}}]\label{pr:braidaction} The braid group $B_n$ acts on the set of exceptional collections in $\Dscr^b(X)$ of length $n$ by mutations.  More precisely,  the standard generator $\sigma_i \in B_n$, $1 \leq i \leq n-1$ acts by
\begin{align*} \sigma_i(E_1, \dots,E_{n}) = (E_1, \dots, E_{i-1}, L_{E_{i}}E_{i+1}, E_{i},E_{i+2}, \dots, E_{n})
\end{align*}
\end{proposition}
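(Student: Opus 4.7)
The plan is to verify three things: (a) that the formula for $\sigma_i$ sends full exceptional collections to full exceptional collections, (b) that each $\sigma_i$ is invertible on this set, and (c) that the braid relations $\sigma_i\sigma_j=\sigma_j\sigma_i$ for $|i-j|\ge 2$ and $\sigma_i\sigma_{i+1}\sigma_i=\sigma_{i+1}\sigma_i\sigma_{i+1}$ hold. Together, (a)--(c) give a well-defined homomorphism from $B_n$ to the group of bijections of the set of full exceptional collections of length~$n$.

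For (a), the key local statement is that if $(E,F)$ is an exceptional pair, then so is $(L_E F, E)$. Applying $\RHom(-,E)$ to the defining triangle $\Hom^\bullet(E,F)\otimes E\to F\to L_EF$ and using $\Hom^\bullet(F,E)=0$ together with $\Hom^\bullet(E,E)=\CC$ shows $\Hom^\bullet(L_EF,E)=0$. Similarly, applying $\RHom(L_EF,-)$ to the same triangle, combined with applying $\RHom(-,F)$ to compute $\Hom^\bullet(L_EF,F)$, gives $\Hom^\bullet(L_EF,L_EF)\cong\CC$. The semiorthogonality between $L_EF$ and the unchanged entries $E_j$ ($j\ne i-1,i$) follows immediately from the defining triangle because $\Hom^\bullet(E_j,E)=\Hom^\bullet(E_j,F)=0$ for $j<i-1$ and symmetrically on the other side. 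Fullness is automatic: from the defining triangle, $F$ lies in the triangulated subcategory generated by $E$ and $L_EF$, so the two collections generate the same subcategory of $\Dscr^b(X)$.

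For (b), one shows by the same type of argument that $R_{L_EF}E\cong F$: this is obtained by rotating the defining triangle of $L_EF$ and identifying it with the triangle that defines $R_{L_EF}E$, which works because $\Hom^\bullet(L_EF,E)[-1]$ computes the correct cone. Thus $\sigma_i$ has an inverse, sending $(\ldots,A,B,\ldots)$ to $(\ldots,B,R_BA,\ldots)$.

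Step (c) is the heart of the matter and will be the main obstacle. Far commutativity is formal, since $\sigma_i$ and $\sigma_j$ for $|i-j|\ge 2$ modify disjoint pairs of entries. The braid relation reduces, by localizing to an exceptional triple $(E,F,G)$, to the natural isomorphism
\[
L_{L_EF}(L_EG)\;\cong\;L_E(L_FG).
\]
I would prove this by an octahedral argument. Starting from the triangle defining $L_FG$, I apply $L_E$ and use that $L_E$ is compatible with cones, then combine the resulting triangle with the defining triangle of $L_EF$ using the octahedral axiom: the map $\Hom^\bullet(L_EF,L_EG)\otimes L_EF\to L_EG$ fits into a diagram whose outer triangle realizes $L_E(L_FG)$ as the cone of a map out of $L_EF$, thereby identifying it with $L_{L_EF}(L_EG)$. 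The verification requires a careful identification of the Hom-complexes appearing (using $\Hom^\bullet(L_EF,L_EG)\cong\Hom^\bullet(F,G)$, which itself follows from adjunction-type triangles), and this bookkeeping is the technically delicate point; once it is done, the braid relation and hence the full $B_n$-action follow.
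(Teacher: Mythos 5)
The paper offers no proof of this proposition --- it is quoted directly from Bondal --- so the only question is whether your argument is sound. Your overall strategy (closure of exceptional collections under mutation, invertibility, braid relation via an octahedral argument on an exceptional triple) is the standard one and does work, but step (a) as written proves a false statement. With the paper's convention ($\Hom^\bullet(E_j,E_k)=0$ for $j>k$), the pair $(L_EF,E)$ is exceptional if and only if $\Hom^\bullet(E,L_EF)=0$, \emph{not} $\Hom^\bullet(L_EF,E)=0$. The latter is generally nonzero: applying $\RHom(-,E)$ to the defining triangle and using $\Hom^\bullet(F,E)=0$, $\Hom^\bullet(E,E)=\CC$ gives
\[
\RHom(L_EF,E)\;\cong\;\Hom^\bullet(E,F)^\ast[-1],
\]
as one sees concretely for $E=\Oscr$, $F=\Oscr(1)$ on $\PP^1$, where $L_EF=\Oscr(-1)[1]$ and $\Hom^\bullet(L_EF,E)\cong\CC^2[-1]$. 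The correct verification applies $\RHom(E,-)$ to the triangle and uses that the induced map $\Hom^\bullet(E,F)\otimes\Hom^\bullet(E,E)\to\Hom^\bullet(E,F)$ is an isomorphism (composition with evaluation), whence $\Hom^\bullet(E,L_EF)=0$. The same left/right confusion recurs in (b): in the paper's notation $R_{L_EF}E$ is computed from $\Hom^\bullet(E,L_EF)=0$ and is therefore just $E$; what you need (and what your own formula $(A,B)\mapsto(B,R_BA)$ actually produces) is $R_E(L_EF)\cong F$, which does follow from rotating the defining triangle once one notes $\Hom^\bullet(L_EF,E)^\ast\cong\Hom^\bullet(E,F)[1]$. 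Note also that this nonvanishing of $\Hom^\bullet(L_EF,E)$ enters your computation of $\Hom^\bullet(L_EF,L_EF)$, so that step must be redone with the corrected input.

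The remaining points are fine: semiorthogonality of $L_EF$ against the untouched $E_j$, preservation of fullness, far commutativity, and the reduction of the braid relation to the isomorphism $L_{L_EF}(L_EG)\cong L_E(L_FG)$, which is indeed established by the octahedral axiom as you sketch. So the architecture of the proof is correct, but the central orthogonality computation is carried out in the wrong variable and, as stated, asserts something false; it needs to be replaced by the $\RHom(E,-)$ computation before the argument stands.
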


\begin{proposition}[{Bondal, \cite[Assertion 9.2]{MR992977}}]\label{prop:sheavesremain} Let $X$ be a smooth projective variety with $\rk K_0(X) = \dim X+ 1$.  Then mutations of full exceptional collections of shifted sheaves in $\Dscr^b(X)$ also consist of shifted sheaves.  
\end{proposition}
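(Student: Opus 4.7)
The plan is to analyze a single braid generator $\sigma_i$ acting on the full exceptional collection $(E_0,\ldots,E_{n-1})$, which only changes the consecutive pair $(E_{i-1},E_i)$. So the task reduces to showing: if $(F,G) = (E_{i-1},E_i)$ is an exceptional pair of shifted sheaves sitting inside a full exceptional collection of length $n = \dim X + 1$, then $L_F G$ is again a shifted sheaf (and symmetrically for $R_G F$). A direct computation on the defining triangle shows that $L_{F'[a]}(G'[b]) \simeq (L_{F'} G')[b]$, so by shifting $F$ and $G$ individually I may reduce to the case where both are genuine coherent sheaves.

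Next, I would apply the long exact sequence in cohomology to the mutation triangle $\Hom^\bullet(F,G) \otimes F \to G \to L_F G$ and use that $\Ext^k(F,G) = 0$ for $k \notin [0,\dim X]$. This confines the cohomology sheaves $H^k(L_F G)$ to the range $[-1,\dim X - 1]$, with explicit descriptions in terms of the kernel and cokernel of the evaluation map $\Hom(F,G) \otimes F \to G$ and of the sheaves $\Ext^k(F,G) \otimes F$ for $k \geq 1$. The task then reduces to showing that at most one of these cohomology sheaves is nonzero, which forces $L_F G$ to be a shifted sheaf.

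The essential input for this last step is the hypothesis $\rk K_0(X) = \dim X + 1$, combined with the fact that after mutation the collection $(E_0,\ldots, L_F G, F, E_{i+1},\ldots, E_{n-1})$ is itself a full exceptional collection; in particular $L_F G$ is exceptional, and its class together with the other classes forms a basis of $K_0(X)$. The strategy is that if $L_F G$ had two or more nonzero cohomology sheaves, one could exploit the hypercohomology spectral sequence computing $\Ext^\bullet(L_F G, L_F G)$ from $\{H^k(L_F G)\}$, together with the fullness of the original collection (which forces $[E_0], \ldots, [E_{n-1}]$ to span $K_0(X)$), to derive a numerical contradiction with the exceptionality of $L_F G$. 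The main obstacle is precisely this: converting the rank constraint $\rk K_0(X) = \dim X + 1$ into Ext-vanishing sharp enough to collapse the cohomological amplitude of $L_F G$ to a single degree, which requires a delicate combination of the Euler form on $K_0(X)$ with the sheaf-theoretic analysis of the evaluation map $\Hom(F,G) \otimes F \to G$.
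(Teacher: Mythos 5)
The paper does not actually prove this proposition: it is quoted from Bondal \cite[Assertion 9.2]{MR992977}, so there is no internal proof to compare with, and your proposal has to stand on its own. It does not: there is a genuine gap at exactly the point you flag as ``the main obstacle''. The reductions you carry out are fine --- it suffices to treat one generator $\sigma_i^{\pm1}$, shifts can be normalised away via $L_{F[a]}(G[b])\cong(L_FG)[b]$, and the long exact sequence of the mutation triangle confines the cohomology sheaves of $L_FG$ to degrees $[-1,\dim X-1]$, with $\Hscr^{-1}=\ker(\mathrm{ev})$, $\Hscr^0$ an extension of $\Ext^1(F,G)\otimes F$ by $\coker(\mathrm{ev})$, and $\Hscr^j=\Ext^{j+1}(F,G)\otimes F$ for $1\le j\le\dim X-1$. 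But all the difficulty is in the final collapse to a single degree, and the strategy you sketch for it is unlikely to succeed. Exceptionality of the single object $L_FG$, fed into the hypercohomology spectral sequence for $\Ext^\bullet(L_FG,L_FG)$, only kills the two corner terms $\Hom(\Hscr^{\max},\Hscr^{\min})$ and $\Ext^{\dim X}(\Hscr^{\min},\Hscr^{\max})$; in dimension $\ge 3$ this does not force concentration in one degree (which is why exceptional objects on curves and surfaces are automatically shifted sheaves while the same argument already fails on threefolds). Likewise, the observation that $[E_0],\dots,[E_{n-1}]$ span $K_0(X)$ holds for every full exceptional collection and extracts no information specific to the hypothesis $\rk K_0(X)=\dim X+1$.

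What that hypothesis actually buys --- and what Bondal's argument uses --- is that a full exceptional collection of length $n=\dim X+1$ generates a \emph{geometric} helix: the total left mutation satisfies $L_{E_0}\cdots L_{E_{n-2}}E_{n-1}\cong E_{n-1}\otimes\omega_X$ with \emph{no shift} precisely because $n-1=\dim X$ (cf.\ \cite[Proposition 3.3]{BondalPolishchuk} and Remark \ref{rem:ext1zero}). This periodicity, applied to the doubly infinite helix of sheaves obtained by twisting the collection by powers of $\omega_X$, is the global input that controls the evaluation map $\Hom(F,G)\otimes F\to G$ and the higher $\Ext$'s for an adjacent pair, and it is entirely absent from your outline. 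As written, the proposal is a correct set-up followed by an honest admission that the theorem has not been proved; to complete it you would need to bring in the helix structure (in effect reproducing Bondal's \S 9 argument), not merely the Euler form on $K_0(X)$.
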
 

\begin{lemma}\label{lm:4Fanos} Let $X$ be a smooth projective threefold.  The following conditions are equivalent.
\begin{enumerate}
\item \label{lm:4Fanos:it1} $\Dscr^b(X)$ admits a full exceptional collection of length $4$ consisting of vector bundles.  
\item \label{lm:4Fanos:it15} $X$ is Fano and $\Dscr^b(X)$ admits a full exceptional collection of length $4$.
\item \label{lm:4Fanos:it2} $X$ is Fano of Picard rank $1$ and $h^{1,2}(X) = 0$. 
\item \label{lm:4Fanos:it3} In the classification of \cite{IskovskikhProkhorov} (see also \cite{fanography}) $X$ is one of the following types of Fano threefolds: $\PP^3$,  the 3-dimensional quadric $Q_3$,  $V_5$ or $V_{22}$.
\end{enumerate}
\end{lemma}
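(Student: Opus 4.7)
I would prove the equivalence of (1)--(4) via the implications $(4)\Rightarrow(1)$, $(4)\Rightarrow(2)$, $(1)\Rightarrow(3)$, $(2)\Rightarrow(3)$ and $(3)\Rightarrow(4)$.

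The implication $(4)\Rightarrow(1)$ is carried out by citing the classical full exceptional collections of vector bundles on each of the four varieties (Beilinson's on $\PP^3$; Kapranov's spinor collection on $Q_3$; the collections of Orlov on $V_5$ and of Kuznetsov on $V_{22}$), which are reviewed in \S\ref{sec:examples}. Since each of these four varieties is Fano, this also yields $(4)\Rightarrow(2)$. For $(3)\Rightarrow(4)$ I would appeal directly to the Iskovskikh--Prokhorov classification \cite{IskovskikhProkhorov}: among smooth Fano threefolds of Picard rank $1$, the Hodge number $h^{1,2}$ vanishes exactly for the four listed types, only $V_{22}$ having non-trivial moduli.

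Both implications $(1)\Rightarrow(3)$ and $(2)\Rightarrow(3)$ rest on a common Hodge-theoretic core. A full exceptional collection of length $4$ provides a $\ZZ$-basis of $K_0(X)$, so $\rk K_0(X) = 4 = \dim X + 1$. Since $X$ admits a full exceptional collection its Chow motive decomposes as a direct sum of Tate motives (Gorchinskiy--Orlov); taking realizations forces $\H^{\mathrm{odd}}(X,\QQ)=0$ and places the whole Hodge decomposition on the diagonal, i.e.\ $h^{p,q}(X) = 0$ for $p \ne q$. In particular $h^{1,2}(X) = 0$ and $h^{i,0}(X) = 0$ for all $i>0$. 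Combining $h^{0,0}=h^{3,3}=1$, the Poincar\'e duality $h^{1,1}=h^{2,2}$ and $\sum_p h^{p,p}=\rk K_0(X)=4$ yields $h^{1,1}=1$, and then $h^{0,1}=h^{0,2}=0$ identifies the Picard rank of $X$ with $h^{1,1}=1$. For $(2)\Rightarrow(3)$ the Fano hypothesis is part of (2), so (3) is established.

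For $(1)\Rightarrow(3)$ it remains to prove that $X$ is Fano. Writing $K_X = aH$ for an ample generator $H$ of $\Pic(X)$, the vanishing $h^{3,0}=0$ excludes $K_X\equiv 0$, hence $a\ne 0$. To rule out the general-type case $a>0$ I would invoke a further consequence of Tate-motive decomposition: $X$ has universally trivial $\CH_0$ and in particular is rationally connected, so that a rationally connected smooth projective threefold with $\rho(X)=1$ is forced to be Fano (its canonical class cannot be pseudo-effective, and the only Mori contraction on such an $X$ is the contraction to a point). I expect this Fano conclusion to be the main obstacle: the Hodge-theoretic input by itself does not exclude a Godeaux-type smooth projective threefold of general type with $\rho=1$ and vanishing $h^{i,0}$, so one genuinely has to draw on the motivic (not just cohomological) content of the exceptional collection. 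All the remaining steps reduce to citing classical constructions, standard Hodge theory and the Iskovskikh--Prokhorov classification.
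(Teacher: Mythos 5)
Most of your scheme coincides with the paper's: $(4)\Rightarrow(1)$ by citing the Beilinson/Kapranov/Orlov/Kuznetsov collections, $(3)\Leftrightarrow(4)$ by the Iskovskikh--Prokhorov classification, and the diagonal Hodge diamond together with the count $\sum_{p,q}h^{p,q}=4$ to get $h^{1,2}=0$ and Picard rank $1$ (the paper derives the diagonal Hodge diamond from additivity of Hochschild homology rather than from the motivic decomposition of Gorchinskiy--Orlov, but both routes are fine). The one step where you genuinely diverge is the only hard one --- showing that a threefold satisfying (1) is Fano --- and there your argument has a real gap. The chain ``full exceptional collection $\Rightarrow$ Chow motive is a sum of Tate motives $\Rightarrow$ trivial $\CH_0$ $\Rightarrow$ rationally connected'' breaks at the last arrow: triviality of $\CH_0$, and even a full Tate-motive decomposition with $\QQ$-coefficients, does not imply rational connectedness. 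Barlow surfaces (and the classical Godeaux surface) are smooth projective surfaces of general type with $p_g=q=0$ for which Bloch's conjecture is known, so their rational Chow motive is a direct sum of Tate motives, yet they are nowhere near rationally connected. There is no theorem that upgrades the motivic consequences of a full exceptional collection to rational connectedness; whether a smooth projective variety with a full exceptional collection must be rational (or even rationally connected) is an open problem, and ``must be Fano'' is outright false without extra hypotheses (e.g.\ the Hirzebruch surface $\Sigma_2$). So the general-type case with $\rho=1$ is not excluded by your argument.

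Note also that your proof of $(1)\Rightarrow(3)$ never uses the hypothesis that the $E_i$ are vector bundles, whereas that hypothesis is exactly what makes the implication provable. The paper simply invokes \cite[Theorem 3.4]{BondalPolishchuk}, whose proof goes through the theory of helices: a full exceptional collection of sheaves of length $\dim X+1$ generates a geometric helix, and the periodicity of the helix under $-\otimes\omega_X$ (up to shift), combined with the positivity of the morphism spaces between the consecutive bundles in the helix, is what forces $-K_X$ to be ample. Replacing your $\CH_0$ argument by this citation (or by a reproduction of the Bondal--Polishchuk helix argument) repairs the proof; everything else in your write-up is correct.
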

\begin{remark} Among the four types in \ref{lm:4Fanos:it3} only $V_{22}$ has non-trivial moduli.  \end{remark}
\begin{proof} $\eqref{lm:4Fanos:it1} \implies \eqref{lm:4Fanos:it15}$: This is proved in \cite[Theorem 3.4]{BondalPolishchuk}. 
$\eqref{lm:4Fanos:it15} \implies \eqref{lm:4Fanos:it2}$: The fact that a smooth projective variety possessing a full exceptional collection has diagonal Hodge diamond ($h^{p,q}(X) = 0$ for $p\neq q$) is well known and follows from the additivity of Hochschild homology with respect to semiorthogonal decompositions (see e.g.  \cite[Corollary 7.5]{Kuznetsov09}).  For the fact that $X$ has Picard rank one, see the beginning of the proof of \cite[Theorem 3.4]{BondalPolishchuk} (as one can see,  the condition that the full exceptional collection consists of sheaves is not used in this part of the statement).  Note that we also automatically have $K_0(X) \cong \ZZ^4$.  $\eqref{lm:4Fanos:it2} \implies \eqref{lm:4Fanos:it3}$: This follows from the classification of Fano threefolds,  see \cite{IskovskikhProkhorov}.
$\eqref{lm:4Fanos:it3}\implies \eqref{lm:4Fanos:it1}$:
Full exceptional collections of vector bundles of length $4$ are known for each of the listed threefolds (constructed by Beilinson \cite{Beilinson}, Kapranov \cite{Kapranov}, Orlov \cite{OrlovV5} and Kuznetsov \cite{KuznetsovV22}, \cite{KuznetsovV22MPIM} respectively,  the constructions will be reviewed in \S \ref{sec:examples}).  \end{proof} 

\subsection{Spherical twists} 

\begin{definition} An object $S \in \Dscr^b(X)$ is \emph{spherical} if $\omega_X\otimes S\cong S$ and $\Hom^i(S,S) = \CC$ if ${i \in \{ 0, \dim X\}}$ and $\Hom^i(S,S) = 0$
  otherwise. The set of spherical objects in $\Dscr^b(X)$ up to isomorphism is denoted by $\Sph(X)$.
\end{definition} 

\begin{definition}[Seidel-Thomas, \cite{SeidelThomas}] A spherical object $S \in  \Dscr^b(X)$ gives rise to an autoequivalence $T_S \in \Aut  \Dscr^b(X)$ (referred to as \emph{spherical twist}) such that for every $A \in  \Dscr^b(X)$ there is a distinguished triangle 
\begin{align*}
\Hom^\bullet(S,A) \otimes S \xrightarrow{ev} A \to T_S(A).
\end{align*} \end{definition}  

Alternatively,  $T_S$ can be defined as the Fourier-Mukai transform with kernel $\cone(S \boxtimes S^\vee \to \Oscr_\Delta) \in \Dscr^b(X \times X)$.  

We also need a more general notion of a twist with respect to a spherical functor. 
\begin{definition} Let $F: \Bscr \to  \Dscr$ be a functor between triangulated categories with fixed dg-enhancements in the sense of \cite{BK} and assume that $F$ has a right adjoint $R$ and a left adjoint $L$.  The \emph{twist} $T_F $ and the \emph{cotwist} $C_F$ with respect to $F$ are defined via the distinguished triangles 
\begin{align*}
FR \xrightarrow{\varepsilon} \id_{\Dscr} \to T_F\\ 
C_F \to \id_{\Bscr}  \xrightarrow{\eta} RF,
\end{align*}
where $\varepsilon$ and $\eta$ are the counit and the unit of the adjunction $(F,R)$.  We say that the functor $F$ is \emph{spherical} if $T_F$ and $C_F$ are autoequivalences. \footnote{It is possible to define spherical functors without referring to dg-enhancements,  see \cite[Definition 2.8]{KuznetsovCY} (they are needed to define twists and cotwists, however).} \end{definition}

 \begin{remark} To make this definition sensible one needs to  assume in addition that $F, R$ and $L$ have lifts to quasi-functors and consider cones that descend from the dg level.  We refer the reader to \cite{AL} for a comprehensive treatment.  In the geometric context one can circumvent discussing dg-enhancements by considering Fourier-Mukai kernels.  
 \end{remark}

\begin{lemma}[{\cite[Lemma 8.21]{HuybrechtsFM}}]\label{lm:twistconj} Let $S$ be a spherical object and $\Phi \in \Aut \Dscr^b(X)$ an autoequivalence. Then 
\begin{align*} 
T_{\Phi(S)} \cong \Phi T_S \Phi^{-1}
\end{align*} 
 In particular,  for any two spherical objects $S_1, S_2$, one has $T_{T_{S_1}(S_2)} \cong T_{S_1} T_{S_2} T_{S_1}^{-1}$. \end{lemma}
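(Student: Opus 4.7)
The plan is to verify the identity $T_{\Phi(S)}\cong \Phi T_S \Phi^{-1}$ by applying both sides to an arbitrary object $A\in\Dscr^b(X)$ and comparing the defining triangles of the two spherical twists. The second assertion will then follow by specializing to $\Phi=T_{S_1}$ and $S=S_2$.

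First I would check that $\Phi(S)$ is itself spherical, so that $T_{\Phi(S)}$ makes sense. Since $\Phi$ is an equivalence, it commutes with the Serre functor $(-)\otimes\omega_X[\dim X]$ on $\Dscr^b(X)$; applied to $S$ this gives $\Phi(S)\otimes\omega_X\cong \Phi(S\otimes\omega_X)\cong \Phi(S)$. Moreover $\Phi$ induces isomorphisms $\Hom^i(\Phi(S),\Phi(S))\cong \Hom^i(S,S)$ for all $i$, so the sphericity conditions transfer.

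Next, I would start from the defining triangle of $T_S$ applied to $\Phi^{-1}(A)$,
\[
\Hom^\bullet(S,\Phi^{-1}(A))\otimes S \to \Phi^{-1}(A)\to T_S(\Phi^{-1}(A)),
\]
and apply the exact functor $\Phi$ termwise to obtain
\[
\Hom^\bullet(S,\Phi^{-1}(A))\otimes \Phi(S) \to A\to \Phi T_S \Phi^{-1}(A).
\]
The natural adjunction isomorphism $\Hom^\bullet(S,\Phi^{-1}(A))\cong \Hom^\bullet(\Phi(S),A)$ induced by the equivalence $\Phi$ allows me to rewrite the left-most term so the triangle has the same first two terms (and same first arrow, up to this identification) as the defining triangle
\[
\Hom^\bullet(\Phi(S),A)\otimes \Phi(S)\xrightarrow{\mathrm{ev}} A\to T_{\Phi(S)}(A)
\]
of $T_{\Phi(S)}$. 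Uniqueness of cones (up to non-unique isomorphism) then yields $\Phi T_S\Phi^{-1}(A)\cong T_{\Phi(S)}(A)$, and taking $\Phi=T_{S_1}$, $S=S_2$ proves the ``in particular'' statement.

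The main obstacle is twofold: upgrading this pointwise isomorphism of objects to an isomorphism of functors, and verifying that the rewritten map really is the evaluation, not merely \emph{some} map with the correct domain and codomain. Both issues can be handled by passing to Fourier--Mukai kernels, using the Seidel--Thomas description $T_S=\mathrm{FM}_{\cone(S\boxtimes S^\vee\to \Oscr_\Delta)}$. If $\Phi$ has kernel $\Pscr$ with inverse kernel $\Pscr^{-1}$, then convolving the defining triangle of the kernel of $T_S$ with $\Pscr$ on one side and $\Pscr^{-1}$ on the other, and using that $(\Phi\boxtimes\Phi)(S\boxtimes S^\vee)\cong \Phi(S)\boxtimes \Phi(S)^\vee$ together with $\Pscr\ast\Oscr_\Delta\ast\Pscr^{-1}\cong \Oscr_\Delta$, directly identifies the Fourier--Mukai kernel of $\Phi T_S\Phi^{-1}$ with that of $T_{\Phi(S)}$. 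This kernel-level argument sidesteps the functoriality subtleties and gives the isomorphism of autoequivalences cleanly.
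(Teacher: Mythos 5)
The paper gives no proof of this lemma, citing \cite[Lemma 8.21]{HuybrechtsFM}, and your kernel-level argument is essentially the proof from that reference (together with the standard observation that $\Phi(S)$ is spherical because $\Phi$ commutes with the Serre functor and preserves $\Hom$-spaces), so the approach is correct. The one small point to make explicit is that the convolved map $\Phi(S)\boxtimes\Phi(S)^\vee\to\Oscr_\Delta$ really is a nonzero multiple of the evaluation: this follows since $\Hom(\Phi(S)\boxtimes\Phi(S)^\vee,\Oscr_\Delta)\cong\Hom(\Phi(S),\Phi(S))\cong\CC$ is one-dimensional and convolution with an invertible kernel is an equivalence on categories of kernels, hence sends the nonzero evaluation map to a nonzero map, and any two nonzero maps then differ by a scalar automorphism and so have isomorphic cones.
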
 

Passing from $S$ to $T_S$ loses some information,  but not much: 
\begin{lemma}\label{lem:sphericalshift} Assume $\dim X \geq 2$ and let $S,S'$ be two spherical objects in $\Dscr^b(X)$. Then $T_{S} \cong T_{S'}$ if and only if $S \cong S'[n]$ for some $n\in \ZZ$.
\end{lemma}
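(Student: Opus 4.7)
I would treat the two directions separately. The ``if'' direction is a direct unwinding of definitions: for any $n \in \ZZ$, the canonical identification $\Hom^\bullet(S'[n], A) \otimes S'[n] \cong \Hom^\bullet(S', A) \otimes S'$ reconciles the defining triangles of $T_{S'[n]}(A)$ and $T_{S'}(A)$, giving $T_{S'[n]} \cong T_{S'}$.

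For the ``only if'' direction, the strategy is to evaluate both sides of the assumed isomorphism $T_S \cong T_{S'}$ on the object $S'$. A short computation with the defining triangle, using $\Hom^\bullet(S',S') = \CC \oplus \CC[-d]$ where $d = \dim X$, yields $T_{S'}(S') \cong S'[1-d]$. Hence $T_S(S') \cong S'[1-d]$ as well, and the defining triangle of $T_S$ applied to $S'$ becomes
\[
\Hom^\bullet(S, S') \otimes S \to S' \xrightarrow{g} S'[1-d] \to \Hom^\bullet(S, S') \otimes S[1].
\]
The next step is to show $g = 0$: since $\Hom^\bullet(S', S')$ is concentrated in degrees $0$ and $d$ and the hypothesis $d \geq 2$ forces $1-d \notin \{0,d\}$, we have $g \in \Hom^{1-d}(S',S') = 0$.

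A distinguished triangle with vanishing middle morphism splits (formally, after double rotation the connecting morphism becomes zero), giving $\Hom^\bullet(S,S') \otimes S \cong S' \oplus S'[-d]$. The derived category $\Dscr^b(\coh X)$ is Krull--Schmidt, and spherical objects are indecomposable (a nontrivial direct sum decomposition would produce two linearly independent idempotent endomorphisms, contradicting $\End = \CC$). Comparing indecomposable summands --- the left-hand side being a direct sum of shifts of $S$, the right-hand side having precisely two non-isomorphic indecomposable summands $S'$ and $S'[-d]$ --- forces $S \cong S'[n]$ for some $n \in \ZZ$.

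The main delicate step, and the only place the hypothesis $\dim X \geq 2$ is invoked, is the vanishing of $g$: without it the triangle need not split and the final Krull--Schmidt comparison would collapse. Everything else is formal.
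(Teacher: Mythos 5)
Your proof is correct and follows essentially the same route as the paper's: the paper evaluates both twists on $S$ (using $T_{S'}(S)\cong T_S(S)\cong S[1-\dim X]$ and observing that the second map of the defining triangle vanishes because $\Hom(S,S[1-\dim X])=0$ when $\dim X\ge 2$), whereas you evaluate on $S'$; the two arguments are mirror images, both concluding by splitting the triangle and invoking indecomposability of spherical objects. The only difference is that your version exhibits $S'\oplus S'[-d]$ as a sum of shifts of $S$ rather than $S$ as a summand of $\Hom^\bullet(S,S')\otimes S'$ --- an immaterial swap.
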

\begin{proof}
We explain the non-obvious direction.  So assume $T_S\cong T_{S'}$.
We then have $T_{S'}(S) = T_S(S) = S[1-\dim X]$.  On the other hand,  $T_{S'}(S)$ fits into the distinguished triangle 
\begin{align*}
\Hom^\bullet(S',S)\otimes S' \xrightarrow{} S \xrightarrow{} S[1-\dim X],
\end{align*}
Since $\dim X \geq 2$,  then the map $S \xrightarrow{0} S[1-\dim X]$ is $0$ because $\Hom(S,S[1-\dim X]) = 0$.  This implies that $S$ is a summand of $\Hom^\bullet(S',S)\otimes S'$.
Since both $S$ and $S'$ are indecomposable,  this yields that  $S = S'[n]$ for some $n \in \ZZ$.  
\end{proof}

\begin{remark} Lemma \ref{lem:sphericalshift} also holds for $\dim X = 1$, i.e.  $X$ a smooth elliptic curve.  However, we will not use it in this case, so we omit the proof. \end{remark} 
Now let $X$ be a smooth Fano variety and $i: Y \hookrightarrow X$ a smooth anticanonical divisor.  If $E \in \Dscr^b(X)$ is an exceptional object,  then the object $i^*E \in \Dscr^b(Y)$  is spherical (see \cite[Proposition 3.15]{SeidelThomas}). 

\begin{lemma}\label{eq:samehom} If $(E,F)$ is an exceptional pair in $\Dscr^b(X)$, then 
\begin{equation}
\label{eq:res}
\Hom_X^\bullet(E,F) \cong \Hom_Y^\bullet(i^* E, i^*F)
\end{equation}
\end{lemma}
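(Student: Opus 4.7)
The plan is to pass between $X$ and $Y$ via the adjunction $(i^\ast, i_\ast)$ combined with Serre duality, using the fact that $Y\in |-K_X|$ is exactly what forces $\Oscr_X(-Y)\cong \omega_X$.

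First I would start with the structure sequence for the divisor $Y \subset X$,
\[
0 \to \Oscr_X(-Y) \to \Oscr_X \to i_\ast \Oscr_Y \to 0,
\]
and observe that since $Y$ is anticanonical we can identify $\Oscr_X(-Y) \cong \omega_X$. Tensoring with $F$ (which is locally free, or more generally a perfect complex, so the tensor is exact/derived-friendly) and invoking the projection formula $F \otimes i_\ast \Oscr_Y \cong i_\ast i^\ast F$, I get a triangle
\[
F \otimes \omega_X \to F \to i_\ast i^\ast F.
\]

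Next I would apply $\RHom_X(E, -)$ to this triangle, producing a long exact sequence
\[
\cdots \to \Hom_X^k(E, F\otimes \omega_X) \to \Hom_X^k(E,F) \to \Hom_X^k(E, i_\ast i^\ast F) \to \cdots
\]
By adjunction $\Hom_X^k(E, i_\ast i^\ast F) \cong \Hom_Y^k(i^\ast E, i^\ast F)$. So the desired identity \eqref{eq:res} reduces to showing that the outer term vanishes, i.e.\ $\Hom_X^\bullet(E, F\otimes \omega_X) = 0$.

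Here is where I use the exceptional pair hypothesis. By Serre duality on the $n$-dimensional smooth projective variety $X$,
\[
\Hom_X^k(E, F \otimes \omega_X) \cong \Hom_X^{n-k}(F, E)^\ast,
\]
and since $(E,F)$ is an exceptional pair we have $\Hom_X^\bullet(F,E) = 0$ by definition. Thus the outer terms vanish, and the long exact sequence collapses to the desired isomorphism $\Hom_X^\bullet(E, F) \cong \Hom_Y^\bullet(i^\ast E, i^\ast F)$. I don't foresee any serious obstacle; the only point requiring mild care is ensuring the tensor product with $F$ and the projection formula work at the derived level, which is standard since $E$ and $F$ are objects of $\Dscr^b(X)$ on a smooth variety (so every object is perfect).
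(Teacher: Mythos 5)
Your proof is correct and follows essentially the same route as the paper: restrict via the triangle $F\otimes\omega_X\to F\to i_*i^*F$ coming from the anticanonical structure sequence, use adjunction/projection formula to identify the third term's Homs with $\Hom_Y^\bullet(i^*E,i^*F)$, and kill the outer terms by Serre duality together with $\Hom^\bullet_X(F,E)=0$. No issues.
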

\begin{proof} Observe that by adjunction 
\begin{align*} 
\Hom_Y(i^* E,i^* F[k])  \cong \Hom_X( E, F\otimes i_* \Oscr_Y[k])
\end{align*}
for any $k \in \ZZ$.  Applying $\Hom_X(E,-)$ to the distinguished triangle 
\begin{align*} 
F \otimes \omega_X\to F \to F\otimes i_* \Oscr_Y \r
\end{align*}
(obtained from identifying $\omega_X\cong \Oscr_X(-Y)$) we get the long exact sequence 
\begin{align*} 
\dots \to \Hom_X(E,F[k]) \to  \Hom_X( E, F\otimes i_* \Oscr_Y[k]) \to \Hom_X(E,F\otimes \omega_X[k+1]) \to \dots
\end{align*}
By Serre duality $\Hom_X(E,F\otimes \omega_X[k+1]) \cong  \Hom_X(F,E[\dim X-k-1])^* = 0$ for any $k \in \ZZ$, since $(E,F)$ is an exceptional pair.  Hence \eqref{eq:res} follows.
\end{proof}
 
 Now suppose $X$ has a full exceptional collection 
 \[
 {\Dscr^b(X) = \langle E_1, \dots, E_n\rangle}.
 \]
 
 We denote $T_k := T_{i^*E_k} \in \Aut \Dscr^b(Y) $.  The following fact is a special case of \cite[Theorem A.1]{KuznetsovPerry} by Kuznetsov and Perry and is well known to experts,  but we include the proof for the reader's convenience.  
 
\begin{proposition}\label{pr:compserre} There is an isomorphism of endofunctors of $\Dscr^b(Y)$ 
\begin{equation}
\label{eq:shift}
 T_1 \dots T_n(-) =  - \otimes i^* \omega_X [2]. 
\end{equation}
 \end{proposition}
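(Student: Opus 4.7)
The plan is to identify both sides of \eqref{eq:shift} with the twist $T_{i^\ast}$ of the spherical functor $i^\ast\colon \Dscr^b(X)\to\Dscr^b(Y)$, computing this twist explicitly and then factoring it along the given exceptional collection.

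First, I would verify that $i^\ast$ is spherical (with right adjoint $i_\ast$) and compute its twist directly. Applying $Li^\ast$ to the structure triangle $\omega_X\to\Oscr_X\to i_\ast\Oscr_Y$, which is valid because $\omega_X\cong\Oscr_X(-Y)$ under the anticanonical assumption, yields $i^\ast\omega_X\to\Oscr_Y\to i^\ast i_\ast\Oscr_Y$. The leftmost arrow is the restriction to $Y$ of the defining section of $Y$, hence zero; the triangle therefore splits as $i^\ast i_\ast\Oscr_Y\cong\Oscr_Y\oplus i^\ast\omega_X[1]$. Tensoring with $A\in\Dscr^b(Y)$ gives $i^\ast i_\ast A\cong A\oplus(A\otimes i^\ast\omega_X)[1]$, and since the counit $i^\ast i_\ast A\to A$ is the projection to the first summand, taking the cone produces $T_{i^\ast}(A):=\cone(i^\ast i_\ast A\to A)\cong A\otimes i^\ast\omega_X[2]$. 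This matches the right-hand side of \eqref{eq:shift}.

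Second, I would factor $T_{i^\ast}$ along the exceptional collection. The decomposition $\Dscr^b(X)=\langle E_1,\dots,E_n\rangle$ presents each component as $\Dscr^b(\mathrm{pt})$ embedded via $V\mapsto V\otimes E_k$; post-composing with $i^\ast$ gives the spherical functor $\Dscr^b(\mathrm{pt})\to\Dscr^b(Y)$, $V\mapsto V\otimes i^\ast E_k$, whose twist is precisely $T_k$. A general factorization theorem for spherical twists along a semi-orthogonal decomposition (as in the work of Kuznetsov, Addington--Donovan, or Halpern-Leistner--Shipman) then yields $T_{i^\ast}\cong T_1T_2\cdots T_n$. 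Combining the two steps gives \eqref{eq:shift}.

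The main obstacle is the factorization step: one must verify the hypotheses of the spherical-twist factorization theorem and, in particular, check that the composition order matches the enumeration of the exceptional collection. A self-contained alternative, more in line with the case treated in \cite{CanonacoKarp}, is to work at the level of Fourier--Mukai kernels: the kernel of $T_k$ is $\cone((i^\ast E_k)^\vee\boxtimes i^\ast E_k\to\Oscr_{\Delta_Y})$, and iterated convolution of these kernels can be identified with the kernel $\Delta_\ast(i^\ast\omega_X[2])$ of $(-)\otimes i^\ast\omega_X[2]$ by exploiting the Beilinson-type resolution of $\Oscr_{\Delta_X}\in\Dscr^b(X\times X)$ built from the exceptional collection and its dual, pulled back along $i\times i\colon Y\times Y\hookrightarrow X\times X$.
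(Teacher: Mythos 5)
Your proposal is correct and follows essentially the same route as the paper: both identify $T_1\cdots T_n$ with the twist of the spherical functor $i^*$ (the paper via Barbacovi's gluing theorem for compositions of spherical twists applied to the tilting equivalence $\Dscr^b(X)\simeq\Dscr^b(\REnd_X(\bigoplus_k E_k))$, together with Lemma \ref{eq:samehom}; you via the equivalent SOD-factorization of $T_{i^*}$), and then compute $T_{i^*}(\Fscr)=\cone(i^*i_*\Fscr\to\Fscr)\cong\Fscr\otimes i^*\omega_X[2]$, which the paper simply cites from \cite[Corollary 11.4]{HuybrechtsFM} and you rederive by the standard splitting of $i^*i_*$.
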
 

\begin{proof} Let $\Escr := \bigoplus_{k=1}^n E_k$ and $A:= \REnd_X(\Escr)$.  Note that $A$ is a smooth dg-algebra.  Then the functor 
\begin{equation}
\label{eq:equivalence}
\RHom_X(\Escr,-): \Dscr^b(X) \to \perf(A) 
\end{equation}
 is an equivalence of categories with quasi-inverse given by $-\otimes_A^\LL \Escr$ (see \cite{MR992977}).  Denote $\overline{\Escr} := \bigoplus_{k=1}^n i^* E_k$.  By \cite[Theorem 3.0.1]{Barbacovi} and Lemma \ref{eq:samehom}
the composition $T_1 \dots T_n$ is the twist along the spherical functor $$-\otimes_A^\LL \overline{\Escr}: \perf(A) \to  \Dscr^b(Y).$$ 
Taking the composition with the equivalence  \eqref{eq:equivalence} one observes that $T_1 \dots T_n$ is also the twist along the spherical functor $i^*: \Dscr^b(X) \to \Dscr^b(Y)$.  Thus,  for any $\Fscr \in \Dscr^b(Y)$ we have
\[
 T_1 \dots T_n(\Fscr) = T_{i^*}(\Fscr) = \cone(i^*i_* \Fscr \to \Fscr) =  \Fscr \otimes \Oscr_Y(-Y)[2],
\]
where the last equality is given by e.g.  \cite[Corollary 11.4]{HuybrechtsFM}. 
\end{proof}

\begin{remark}\label{rem:doublecover} We will also consider collections of spherical objects which occur in the following way.  
Let $Y$ be a K3 surface of degree $2$. Then $Y$ is realised as a double covering $\pi: Y \to \PP^2$ branched along a sextic curve.   If $E \in \Dscr^b(\PP^2)$ is exceptional, then $\RR\pi^*(E) \in \Dscr^b(Y)$ is spherical 
(\cite[Proposition 3.13,  Example 3.14b]{SeidelThomas}). One also checks that \eqref{eq:shift} holds for $n=3$ with the right-hand side replaced by $- \otimes \pi^* \omega_{\PP^2} [1]$.  This is another special case of  \cite[Theorem A.1]{KuznetsovPerry},  applied to the spherical functor $\pi^*$ instead of $i^*$. 
\end{remark} 

\subsection{The Hurwitz action}\label{subsec:hurwitz}

\begin{definition} Let $G$ be a group.  Then the $n$-strand braid group $B_n$ with the standard generators $\sigma_1, \dots, \sigma_{n-1}$ acts on $G^n$ via 
\begin{align*}
\sigma_i(g_1,\ldots,g_i,g_{i+1},\ldots, g_n)&=(g_1,\ldots,g_i g_{i+1}g_i^{-1},g_i,\ldots, g_n). 
\end{align*}
This action is referred to as \emph{the Hurwitz action}.\footnote{In \cite{NVdB} a  different convention for the Hurwitz action is used,  but this is inconsequential.}\end{definition} 

For $X$ Fano, $i: Y \hookrightarrow X$ a smooth anticanonical divisor and $E$ an exceptional object in $\Dscr^b(X)$ we denote by $\phi(E) := T_{i^*(E)} \in \Aut \Dscr^b(Y) $ the corresponding spherical twist.  Assume that $\Dscr^b(X)$ has a full exceptional collection of length $n$. We extend $\phi$ to the set $FEC(X)$ of full exceptional collections in $X$ by applying it term-wise: 
\begin{align*} 
\phi: FEC(X) &\to (\Aut \Dscr^b(Y))^n \\
(E_1,\dots,E_n) &\mapsto (\phi(E_1), \dots, \phi(E_n))
\end{align*}
Proposition \ref{pr:compserre} implies that the image of $\phi$ is contained in the subset 
\[
A_n(Y/X) := \{(g_1, \dots, g_n) \ \vert  \ g_1\cdots g_n = - \otimes i^* \omega_X[2]\} \subset (\Aut \Dscr^b(Y))^n,\]
which is stable under the Hurwitz action. 
\begin{lemma}\label{eq:twistmutation} 
The map $\phi$ intertwines the two $B_n$-actions,  namely,  the mutation action on $FEC(X)$ defined in Proposition \ref{pr:braidaction} and the Hurwitz action on $ (\Aut \Dscr^b(Y))^n$ (or $A_n(Y/X)$).
\end{lemma}
\begin{proof}
It suffices to prove that for an exceptional pair $(E,F)$ in $\Dscr^b(X)$ we have
\begin{align*}
\phi(L_E F ) = \phi(E) \phi(F) \phi(E)^{-1}.
\end{align*}
By Lemma \ref{lm:twistconj} one has \begin{align*}
\phi(E) \phi(F) \phi(E)^{-1} = T_{T_{i^*(E)}(i^*(F))}.
\end{align*}
Hence we need to show that $i^*(L_E F) = T_{i^*(E)}(i^*(F))$ or, equivalently, 
\begin{align*} i^* \cone(\Hom^\bullet_X(E,F) \otimes E \to F)  \cong \cone(\Hom^\bullet_Y(i^* E,i^* F) \otimes i^* E \to i^* F) 
\end{align*} 
This now follows from Lemma \ref{eq:samehom}. 
 \end{proof} 

Let $F_m = \langle x_1, \dots, x_m \rangle$ be the free group on $m$ generators.  We are going to use the following results concerning the Hurwitz action on powers of free groups. 

\begin{theorem}[e.g. {\cite[XI, Corollary 1.8]{MR2463428}}]\label{thm:hurwitz_stab} The stabiliser of $(x_1, \dots, x_m) \in F_m^m$ under the Hurwitz action of $B_m$ on $(F_m)^m$ is trivial.  \end{theorem}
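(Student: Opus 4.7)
The plan is to reformulate the statement as faithfulness of the classical Artin representation $\rho : B_m \to \Aut(F_m)$ and then invoke Artin's theorem.

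\medskip

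First I would construct $\rho$ by setting $\rho(\sigma_i)$ to be the automorphism $x_i \mapsto x_i x_{i+1} x_i^{-1}$, $x_{i+1} \mapsto x_i$, $x_j \mapsto x_j$ for $j \notin \{i, i+1\}$; this is indeed an automorphism (its inverse sends $x_i \mapsto x_{i+1}$, $x_{i+1} \mapsto x_{i+1}^{-1} x_i x_{i+1}$, fixing the other generators), and a short direct computation on the $x_j$'s verifies that the $\rho(\sigma_i)$ satisfy the braid relations, so $\rho$ descends to a well-defined group homomorphism. By construction, the $j$-th entry of $\beta \cdot (x_1, \ldots, x_m)$ equals $\rho(\beta)(x_j)$ (up to a left/right convention, which is harmless since $B_m$ is anti-isomorphic to itself via $\sigma_i \mapsto \sigma_i$). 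Consequently the stabilizer of $(x_1, \ldots, x_m)$ coincides with $\ker \rho$, and the theorem reduces to injectivity of $\rho$.

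\medskip

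For injectivity, my preferred route is geometric. Identify $B_m$ with the mapping class group $\operatorname{MCG}(D_m, \partial D_m)$ of an $m$-punctured disk fixing the boundary pointwise, and $F_m$ with $\pi_1(D_m, p_0)$ for a boundary basepoint, via standard loops $x_i$ encircling the punctures. Under these identifications $\rho$ is the canonical action of the mapping class group on the fundamental group. Injectivity then amounts to the claim that a self-homeomorphism $f$ of the punctured disk which fixes the boundary and acts trivially on $\pi_1$ is isotopic to the identity relative to the boundary. I would argue by induction on $m$: first, triviality on $\pi_1$ forces $f$ to fix each puncture, since the generators $x_i$ are distinguished by the punctures they encircle; next, choose an embedded arc from $p_0$ to one puncture and use triviality on $\pi_1$ to isotope $f$ so that it fixes this arc pointwise; finally, cut the disk along the arc to reduce to a disk with $m-1$ punctures and apply the inductive hypothesis.

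\medskip

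The main obstacle is this final geometric injectivity step — Artin's original theorem from 1925 — whose rigorous execution demands careful bookkeeping of isotopies relative to the boundary and of how the induced action on $\pi_1$ detects the isotopy class of an arc. An alternative combinatorial route uses Garside normal forms on $B_m$ together with the observation that the reduced word lengths of $\rho(\beta)(x_i) \in F_m$ encode enough of the normal form of $\beta$ to force $\ker \rho$ to be trivial. Either approach leans on well-developed braid group machinery, but the geometric argument above is the conceptually cleanest.
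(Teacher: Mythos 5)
The paper offers no proof of this statement: it is quoted verbatim from \cite[XI, Corollary 1.8]{MR2463428}, so there is nothing internal to compare against. Your reduction is the standard one and is correct: the $j$-th entry of $\beta\cdot(x_1,\dots,x_m)$ is $\tilde\rho(\beta)(x_j)$ for the \emph{anti}-homomorphism $\tilde\rho\colon B_m\to\Aut(F_m)$ determined by your formulas, and since word reversal is an anti-automorphism of $B_m$ (the defining relations are palindromic), triviality of the stabiliser of $(x_1,\dots,x_m)$ is indeed equivalent to faithfulness of the Artin representation $\rho$ --- you handle this convention point correctly, and your verification that $\rho(\sigma_i)$ is invertible and satisfies the braid relations is right. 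The remaining content is exactly Artin's classical faithfulness theorem, which you only sketch (the induction ``fix an arc, cut along it'' is the standard argument, but as you note its rigorous execution requires care with isotopies rel boundary); this is the same content the paper outsources to its citation, so I regard your proposal as a correct proof modulo a classical black box rather than as having a gap. For what it is worth, the cited source of Dehornoy--Dynnikov--Rolfsen--Wiest obtains the result by the combinatorial/ordering route you mention as an alternative, whereas your primary argument is the topological one via $\mathrm{MCG}(D_m,\partial D_m)\hookrightarrow\Aut(\pi_1(D_m))$; both are standard, and the topological one is arguably the more conceptual.
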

\begin{theorem}[{\cite[Example 1.3]{NVdB}}]\label{thm:hurwitz_stab1} The stabiliser of $(x_1,x_2,x_1,x_2)$ under the Hurwitz action of $B_2$ on $(F_2)^4$ is freely generated by $\sigma_1^{-1} \sigma_2 \sigma_1$ and $\sigma_3 \sigma_2 \sigma_3^{-1}$.  \end{theorem}

\begin{theorem}[{\cite[Theorem 2.6(1)]{NVdB}}]\label{thm:hurwitz_orbit} Under the Hurwitz action of $B_n$ on $(F_m)^n$,  the orbit of $t = (t_1, \dots,t_n)$ with $t_i \in \{x_1, \dots, x_m\}$ is given by the set of elements in $$\bigcup_{\tau \in S_n} C(t_{\tau(1)})\times\cdots\times C(t_{\tau(n)})$$ whose product is $t_1\cdots t_n$, where $C(x)$ denotes the conjugacy class of $x$ and $\tau \in S_n$ runs over all permutations of $n$ elements. 
\end{theorem}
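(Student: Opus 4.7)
The approach I would take is a minimum-length argument in the free group. The Hurwitz action admits two immediate invariants, which handle the ``easy'' inclusion, and a single length-reduction lemma takes care of the converse.

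The easy inclusion is direct: the generator $\sigma_i$ replaces the adjacent pair $(g_i, g_{i+1})$ by $(g_i g_{i+1} g_i^{-1}, g_i)$, which preserves both the multiset of conjugacy classes $\{[g_j]\}_{j=1}^n$ and the ordered product $g_1 g_2 \cdots g_n$ (since $g_i g_{i+1} g_i^{-1} \cdot g_i = g_i g_{i+1}$). Thus the Hurwitz orbit of $(t_1, \dots, t_n)$ is contained in the described set.

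For the reverse inclusion, fix any tuple $\underline{g} = (g_1, \dots, g_n)$ in the described set and choose a representative $\underline{h} = (h_1, \dots, h_n)$ within its Hurwitz orbit that minimises the total reduced word length $L(\underline{h}) := \sum_{i=1}^n |h_i|$ in $F_m$. The plan is to show $\underline{h} = (t_1, \dots, t_n)$, which then places $(t_1, \dots, t_n)$ in the orbit of $\underline{g}$ and finishes the proof. Each $h_i$ is conjugate to some generator $t_{\tau(i)}$, so $|h_i| \geq 1$; and the product $h_1 \cdots h_n$ equals $t_1 \cdots t_n$, a reduced word of length exactly $n$ (no cancellations occur among positive generators), whence $L(\underline{h}) \geq n$. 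If $L(\underline{h}) = n$, then every $h_i$ is itself a generator, and matching the two reduced expressions for $t_1 \cdots t_n$ forces $h_i = t_i$ for all $i$.

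The core of the argument is therefore excluding $L(\underline{h}) > n$ at the minimum. If this occurred, some $h_i$ would have the form $w x_j w^{-1}$ with $|w| \geq 1$, and the reduction of $h_1 \cdots h_n$ from total length $L > n$ down to the length-$n$ word $t_1 \cdots t_n$ would force nontrivial cancellation across at least one adjacent pair $(h_k, h_{k+1})$. I would then pick an ``innermost'' such cancellation and exhibit a single Hurwitz move $\sigma_k^{\pm 1}$ whose effect absorbs this cancellation and produces a strictly smaller $L$, contradicting minimality. The main obstacle is precisely this step: a Hurwitz move replaces $h_{k+1}$ by the conjugate $h_k h_{k+1} h_k^{-1}$, which \emph{a priori} only grows lengths, so one must verify letter-by-letter that the pre-existing cancellation at the $(k, k+1)$ interface is exactly what makes the conjugation shorten rather than lengthen the tuple. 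A careful normal-form analysis---splitting $h_k$ and $h_{k+1}$ at the interface according to which letters cancel, and using the oddness of $|h_i|$ for $h_i$ conjugate to a generator---is what makes the argument go through.
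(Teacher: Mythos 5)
First, a caveat on the comparison: the paper does not prove this statement at all --- it is imported verbatim from \cite[Theorem 2.6(1)]{NVdB}, so there is no in-paper proof to measure your argument against. Evaluated on its own terms, your easy inclusion is correct, and so is the endgame: if the minimal total length in the orbit is $n$, then each $h_i$ has length $1$, a length-one conjugate of a generator is that generator, and matching the two positive (hence reduced) words of length $n$ forces $h_i=t_i$.

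The gap is the length-reduction step, and it is not a technicality --- it is the entire theorem. Your recipe (``pick an innermost cancellation at some interface $(h_k,h_{k+1})$ and apply $\sigma_k^{\pm1}$ there'') is false as stated. Take $F_3=\langle x,y,z\rangle$, $t=(x,y,z)$, and the orbit element $\sigma_2\sigma_1(x,y,z)=(xyx^{-1},\,xzx^{-1},\,x)$, of total length $7>3$. The interface between $h_1=xyx^{-1}$ and $h_2=xzx^{-1}$ carries a genuine innermost cancellation ($x^{-1}\cdot x$), yet $\sigma_1$ produces $(xyzy^{-1}x^{-1},xyx^{-1},x)$ and $\sigma_1^{-1}$ produces $(xzx^{-1},xz^{-1}yzx^{-1},x)$, both of total length $9$. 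A short count explains why: if $h_k=w x w^{-1}$, $h_{k+1}=v y v^{-1}$ and the cancellation at the interface has length $c\le\min(|w|,|v|)$ (i.e.\ it only eats into the conjugators), then neither conjugation can shorten the tuple; a single move helps only when the cancellation penetrates past the central conjugated letter of one of the two factors. Proving that, whenever $L>n$, \emph{some} interface admits such a deep cancellation (equivalently, that there are no local minima of $L$ above $n$) requires a global analysis of the cancellation pattern of $h_1\cdots h_n$ --- e.g.\ via the non-crossing matching realizing the free reduction and a parity count of positive versus negative letters in each $h_i$ --- and this is precisely the hard content of Artin's classical characterization of braid automorphisms of free groups, of which the cited result is a generalization. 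Without that lemma your argument does not close.
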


\begin{remark}\label{rem:permutfree} In fact,  one can see from the proof of Theorem \ref{thm:hurwitz_orbit} or by passing to the abelianisation that any element in 
\[
C(x_{i_1}) \times \dots \times C(x_{i_n}) \in (F_m)^n
\]
with product $t_1\cdots t_n$ and $1 \leq i_j \leq n$ is in the orbit of $t = (t_1, \dots, t_n)$.  In other words,  the condition that $x_{i_1}, \dots, x_{i_n}$ is a permutation of $t_1, \dots, t_n$ is automatic. 
 \end{remark} 

\subsection{Bridgeland's stability conditions and derived autoequivalences of K3 surfaces}\label{sec:K3prelim}
Let $Y$ be a K3 surface with Picard rank $1$.
Let $H$ be an ample generator of ${\Pic(Y) \cong \NS(Y)}$ and put $H \cdot H = d:=2\delta$.  The extended Neron-Severi group (or equivalently the numerical Grothendieck group)
\begin{equation}\label{eq:Z3}
\Nscr(Y) = \H^0(Y,\ZZ) \oplus \Pic(Y) \oplus  \H^4(Y,\ZZ) \cong \ZZ^3
\end{equation}
is equipped with the Mukai pairing given by 
\[ \langle (r_1,l_1,s_1), (r_2,l_2,s_2) \rangle  := dl_1l_2 - r_1s_2 - r_2s_1 
\]
or, equivalently,  a quadratic form $q(r,l,s) = \delta l^2 - rs$. 
In particular $\Nscr(Y)$ has signature $(2,1)$.
For an object $E \in \Dscr^b(Y)$ let
\begin{equation}
\label{eq:mukai}
v(E) := (\rk(E),c_1(E), c_1(E)^2/2 - c_2(E) + \rk(E)) \in \Nscr(Y)
\end{equation}
 be the corresponding Mukai vector.  Then for $E, F \in \Dscr^b(Y)$ we have 
\[
\langle v(E), v(F) \rangle = - \chi(E,F) = - \sum_{i \in \ZZ} (-1)^i \dim \Hom^i(E,F).
\] 
Hence if $E \in \Dscr^b(Y)$ is a  $2$-spherical object and $v(E) = (r,l,s)$, then 
\[
 \langle v(E), v(E) \rangle = dl^2 - 2rs = -2
\]
 We say that a Mukai vector $\rho \in \Nscr(Y)$ is\emph{ spherical} (or a \emph{root}) if $\langle \rho, \rho \rangle = -2$. Set 
 \begin{align*}
  \Delta(Y) := \{ \rho \in \Nscr(Y) \ | \  \langle \rho,  \rho \rangle = -2 \} \\
  \Delta^+(Y) := \{ \rho = (r,l,s) \in \Delta(Y) \ | \  r > 0 \} 
\end{align*} 
 For any $\rho \in \Delta^+(Y)$ there exists a unique spherical vector bundle $E_\rho$ with $v(E_\rho) = \rho$ and any spherical sheaf on $Y$ is a slope stable vector bundle.  This is a combination of the results of Yoshioka \cite{Yoshioka} and Mukai \cite{Mukai87},  see footnote (2) on page 10 of \cite{BayerBridgeland}.
 
If $E \in \Dscr^b(Y)$ is a spherical object,  then the spherical twist $T_E$ acts on cohomology by the reflection at the hyperplane orthogonal to $v(E) \in \Nscr(Y)$: 
\[
 s_{v(E)}(\alpha) = \alpha + \langle \alpha, v(E) \rangle v(E).
 \]

Let $\Aut^+ \H^*(Y,\ZZ) \subset \Aut \H^*(Y,\ZZ) $  be the subgroup of
all Hodge isometries which preserve the orientation of the positive
4-planes\footnote{The Mukai lattice $\H^*(Y,\ZZ)$ has signature $(4,20)$. We say that an isometry $\rho \in \Aut \H^*(Y,\ZZ)$ preserves the orientation of the positive 4-planes if under the orthogonal projection a fixed orientation of a positive 4-plane in $\H^*(Y,\ZZ)$ is the same as the induced orientation on its image under $\rho$.  This does not depend on the choice of a positive 4-plane and an orientation on it.  For more details we refer to \cite{HMS}.}.  Analogously we define $\Aut^+ \Nscr(Y)$ as the group of isometries of $\Nscr(Y)$ which preserve the orientation of the positive 2-planes. By a result due to
Huybrechts, Macr\`i and Stellari (\cite[Theorem 2]{HMS}, see also
\cite[Corollary 16.3.13]{Huybrechts}) the subgroup
$\Aut^+ \H^*(Y,\ZZ)$ is precisely the image of the natural map
${\Aut \Dscr^b(Y) \to \Aut \H^*(Y,\ZZ)}$.  We denote the kernel of
this map by $\Aut^0 \Dscr^b(Y)$. 
The space
\begin{align}\label{eq:defP}
\Pscr(Y) := \{v \in  \Nscr(Y)_{\CC} \ | \ \Re(v), \Im(v) \text{ span a positive 2-plane}\}
\end{align}
has two connected components distinguished by the orientation.  Let $ \Pscr^+(Y)$ be the component containing $\exp(iH) = (1,i,-\delta)$. Define an open subset 
\begin{align*} \Pscr_0^+(Y)  :=  \Pscr^+(Y) \setminus \bigcup_{\rho \in \Delta(Y)} \rho^\perp,
\end{align*}
 where $\rho^\perp$ denotes the orthogonal complement with respect to the extended Mukai pairing on $\Nscr(Y)_\CC$.  
 
 For the details regarding Bridgeland's stability conditions on K3 surfaces and in general we refer the reader to \cite{BridgelandStab}, \cite{BridgelandK3}.  
 
\begin{definition}\label{def:stab}
A \emph{numerical stability condition} $\sigma=(Z,\Pscr)$ on $\Dscr^b(Y)$ consists of a group homomorphism $Z: \Nscr(Y) \to \CC$ (referred to as \emph{the central charge}) and a collection of full additive subcategories $\Pscr (\varphi)\subset  \Dscr^b(Y)$ for each \emph{phase} $\varphi\in \RR$ such that:
\begin{enumerate}
  \item If $0\neq E\in \Pscr(\varphi)$, then $Z(E) = m(E)\exp(i\pi\varphi)$
    for some $m(E)\in \RR_{>0}$.
  \item $\Pscr (\varphi+1) = \Pscr (\varphi)[1]$ for all $\varphi\in \RR$.
  \item If $\varphi_1>\varphi_2$ and $A_j\in \Pscr(\varphi_j)$, then $\Hom_{ \Dscr^b(Y)}(A_1,A_2)=0$. 
  \item For every $0\neq E\in  \Dscr^b(Y)$ there is a finite sequence of real numbers $\varphi_1>\varphi_2>\cdots>\varphi_n$ and a collection of distinguished triangles

\noindent
\begin{tikzpicture}[
  >=stealth,
  baseline=(E1.base),
  x=\dimexpr\linewidth/14\relax, 
  y=1cm
]
  \node (E0)   at (0,0)    {$0 = E_0$};
  \node (E1)   at (2.8,0)  {$E_1$};
  \node (E2)   at (5.6,0)  {$E_2$};
  \node (dots) at (8.4,0)  {$\cdots$};
  \node (Enm1) at (11.2,0) {$E_{\,n-1}$};
  \node (En)   at (14,0)   {$E_n = E$};
  \draw[->] (E0)--(E1); \draw[->](E1)--(E2); \draw[->](E2)--(dots);
  \draw[->] (dots)--(Enm1); \draw[->](Enm1)--(En);
  \node (A1) at ($(E0)!0.5!(E1) + (0,-2)$) {$A_1$};
  \node (A2) at ($(E1)!0.5!(E2) + (0,-2)$) {$A_2$};
  \node (An) at ($(Enm1)!0.5!(En) + (0,-2)$) {$A_n$};
  \draw[->] (E1)--(A1); \draw[->, dashed](A1)--(E0);
  \draw[->] (E2)--(A2); \draw[->, dashed](A2)--(E1);
  \draw[->] (En)--(An); \draw[->, dashed](An)--(Enm1);
\end{tikzpicture}
with $0 \neq A_j \in \mathcal{P}(\varphi_j)$ for all $j$.
\end{enumerate}
\end{definition}
 
Lemma 5.2 in \cite{BridgelandStab} shows that the categories $\Pscr(\varphi)$ in the definition above are in fact abelian for all $\varphi \in \RR$.  For $I \subset \RR$ an interval,  we let $\Pscr(I) \subset \Dscr^b(Y)$ be a subcategory generated by all $\Pscr(\phi)$ with $\phi \in I$.  A stability condition is called \emph{locally finite} if there exists $\varepsilon >0 $ such that the category $\Pscr ((\varphi - \varepsilon, \varphi+\varepsilon))$ has finite length for all $\varphi \in \RR$.  Denote the space of numerical locally finite stability conditions on $\Dscr^b(Y)$  by $\Stab(Y)$.  As shown in \cite[Corollary 1.3]{BridgelandStab}, $\Stab(Y)$ is a finite-dimensional complex manifold.

Recall that there is a natural left action of $\Aut \Dscr^b(Y)$ on $\Stab(X)$. This action is faithful and commutes with the right action of the universal cover $\widetilde{\GL^+(2,\RR)}$ of the group of orientation-preserving automorphisms $\GL^+(2,\RR)$ (\cite[Lemma 8.2]{BridgelandStab}).  There is a subgroup $\CC \subset \widetilde{\GL^+(2,\RR)}$ acting freely by rotating the central charge and relabelling the phases. 
 
  Since the Mukai pairing is non-degenerate,  for every $Z: \Nscr(Y) \to \CC$ there exists $Z^\vee \in \Nscr(Y)_{\CC}$ such that $Z(-) = \langle Z^\vee, - \rangle$.  Let 
  \[
  \pi: \Stab(Y) \to \Nscr(Y)_{\CC}
  \]
   be the forgetful map sending a stability condition $(Z,\Pscr)$ to $Z^\vee$.  By \cite[Theorem 1.2]{BridgelandStab},  this is a local homeomorphism. 

\begin{theorem}[Bridgeland,  {\cite[Theorem 1.1]{BridgelandK3}}]\label{thm:BK3} Let $Y$ be a K3 surface.  There exists a connected component $\Stab^\dagger(Y)$ of $\Stab(Y)$ which is mapped onto $\Pscr_0^+(Y)$ by $\pi$.  Moreover,  the induced map $\pi: \Stab^\dagger(Y) \to \Pscr_0^+(Y)$ is a Galois covering and the subgroup  of $\Aut^0 \Dscr^b(Y)$ preserving $\Stab^\dagger(Y)$ can be identified with the group of deck transformations of $\pi$. 
\end{theorem}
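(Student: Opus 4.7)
The plan is to follow Bridgeland's strategy in \cite{BridgelandK3}, proceeding in four stages: explicit construction of a family of stability conditions, extension by group actions to sweep out $\Pscr_0^+(Y)$, upgrade of the local homeomorphism $\pi$ to a covering, and identification of the Galois group.

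\emph{Stage 1: explicit construction.} For $(\beta,\omega)\in\NS(Y)_\RR\times\NS(Y)_\RR$ with $\omega$ ample (and a suitable positivity hypothesis on $\omega^2$), I would form the tilted $t$-structure
\[
\Ascr(\beta,\omega)=\langle\Fscr_{\beta,\omega}[1],\Tscr_{\beta,\omega}\rangle\subset\Dscr^b(Y),
\]
where $(\Tscr_{\beta,\omega},\Fscr_{\beta,\omega})$ is the torsion pair on $\coh(Y)$ splitting $\mu_\omega$-semistable torsion-free sheaves at slope $\beta\cdot\omega$, paired with the central charge
\[
Z_{\beta,\omega}(E):=\langle\exp(\beta+i\omega),v(E)\rangle.
\]
The claim is that $(Z_{\beta,\omega},\Ascr(\beta,\omega))$ is a Bridgeland stability condition. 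Positivity of $\Im Z_{\beta,\omega}$ on $\Ascr(\beta,\omega)\setminus\{0\}$ rests on the Bogomolov--Gieseker inequality for $\mu_\omega$-semistable torsion-free sheaves on K3 surfaces; existence of Harder--Narasimhan filtrations uses Noetherianness of $\Ascr(\beta,\omega)$ together with the classification of spherical sheaves cited in the excerpt. The resulting vectors $Z^\vee_{\beta,\omega}$ trace out an open subset of $\Pscr_0^+(Y)$.

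\emph{Stages 2--3: covering $\Pscr_0^+(Y)$.} Let $\Stab^\dagger(Y)$ be the connected component containing this family. Combining the local homeomorphism of $\pi$ from \cite[Theorem 1.2]{BridgelandStab} with the right $\widetilde{\GL^+(2,\RR)}$-action (which rotates $Z^\vee$ within the positive $2$-planes) and with autoequivalences in $\Aut^0\Dscr^b(Y)$ (for instance squares $T_S^2$ of spherical twists, which act trivially on $\Nscr(Y)$), I would transport the constructed region throughout $\Pscr_0^+(Y)$; connectedness of $\Pscr_0^+(Y)$ then yields surjectivity of $\pi$. To promote $\pi$ to a covering map, one verifies path lifting: for a path $\gamma$ in $\Pscr_0^+(Y)$ with a partial lift $\tilde\gamma$ to $\Stab^\dagger(Y)$, a limit of $\tilde\gamma$ exists in $\Stab(Y)$, because otherwise a sequence of semistable objects of bounded mass would, via their Mukai vectors, accumulate on a new root $\rho\in\Delta$ orthogonal to the limit of $Z^\vee$, contradicting the assumption $\gamma\subset\Pscr_0^+(Y)$.

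\emph{Stage 4 and main obstacle.} Once $\pi$ is a covering, any deck transformation acts trivially on $\Pscr_0^+(Y)\subset\Nscr(Y)_\CC$, hence lies in $\Aut^0\Dscr^b(Y)$; conversely, faithfulness of the $\Aut\Dscr^b(Y)$-action on $\Stab(Y)$ (\cite[Lemma 8.2]{BridgelandStab}) identifies the deck group with the stabiliser of $\Stab^\dagger(Y)$ in $\Aut^0\Dscr^b(Y)$. The technical core, and the main obstacle, lies in Stages 2--3: controlling semistable objects along paths and ruling out the ``escape'' scenario in which a lift exits $\Stab(Y)$ while its projection remains in the complement of the root hyperplanes. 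This requires the delicate wall-and-chamber analysis developed in \cite{BridgelandK3}, and is the reason this theorem is cited rather than reproved here.
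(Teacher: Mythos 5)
The paper does not prove this theorem; it is quoted verbatim from Bridgeland \cite{BridgelandK3}, and your proposal is an accurate outline of Bridgeland's own argument (geometric stability conditions from tilting with central charge $\langle\exp(\beta+i\omega),v(E)\rangle$, transport by $\widetilde{\GL^+(2,\RR)}$ and autoequivalences, the wall analysis showing lifts can only fail to extend when $Z^\vee$ approaches a root hyperplane, and identification of the deck group via faithfulness of the action). Since you correctly defer the technical core to the cited reference, your treatment matches the paper's.
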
 

Theorem \ref{thm:BK3} implies that there is an injective homomorphism 
\[
 \varphi: \pi_1( \Pscr_0^+(Y)) \to \Aut^0 \Dscr^b(Y).
 \]
 It was conjectured by Bridgeland \cite[Conjecture 1.2]{BridgelandK3} that $\Stab^\dagger(Y)$ is simply-connected and is preserved by $\Aut \Dscr^b(Y)$ (hence,  in particular,  $\varphi$ is bijective).  For K3 surface of Picard rank $1$ this conjecture was proved by Bayer and Bridgeland: 

 \begin{theorem}[Bayer-Bridgeland,  {\cite[Theorem 1.3, Remark 6.10]{BayerBridgeland}}]\label{thm:mainBB} Let $Y$ be a K3 surface with Picard rank $1$.  Then $\Stab^\dagger(Y)$ is preserved by $\Aut \Dscr^b(Y)$ and is contractible.  In particular,  there is an exact sequence of groups 
 \begin{equation}
\label{eq:mainBB}
 1 \to  \Aut^0 \Dscr^b(Y)  \cong \pi_1( \Pscr_0^+(Y)) \to \Aut \Dscr^b(Y) \to \Aut^+ \H^*(Y,\ZZ)  \to 1.
\end{equation}
Moreover,  for any $\rho \in \Delta(Y)$,  the group $\Aut^0 \Dscr^b(Y) $ acts transitively on the set of all spherical objects with Mukai vector $\rho$. 
 \end{theorem}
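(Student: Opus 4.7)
The plan is to deduce the theorem from Bridgeland's covering result (Theorem~\ref{thm:BK3}) via a fundamental-group computation, using crucially that for Picard rank~$1$ the lattice $\Nscr(Y)$ has rank~$3$ and signature $(2,1)$, so that the associated period domain is essentially hyperbolic $2$-space. Following Kawatani's observation that $\Stab^\dagger(Y)$ is closely tied to $\HH$, one constructs a natural equivariant map (with respect to the free $\CC$-action from $\widetilde{\GL^+(2,\RR)}$) whose underlying map on the base $\Pscr_0^+(Y)$ lands in the complement $\HH^0 \subset \HH$ of the countable set of ``spherical punctures'' $(l\delta + i\sqrt{\delta})/(r\delta)$ indexed by $(r,l,s) \in \Delta^+(Y)$, coming precisely from the hyperplanes $\rho^\perp$ that were excised in the definition of $\Pscr_0^+(Y)$. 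Since $\HH^0$ is a hyperbolic Riemann surface, its universal cover is a disk, so contractibility of $\Stab^\dagger(Y)$ reduces (via the long exact sequence for the $\CC$-fibration, and using that a fibration with contractible fibre over a contractible base is contractible) to showing that the induced quotient map is the universal cover of $\HH^0$.

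To establish this I would analyse wall-crossing at each spherical wall $\rho^\perp \subset \Pscr^+(Y)$. A small loop in $\HH^0$ around a puncture $\rho$ corresponds to two crossings of $\rho^\perp$; the Harder--Narasimhan/tilting formalism shows that each crossing mutates the heart by the spherical twist $T_{E_\rho}$, where $E_\rho$ is the unique spherical vector bundle of class $\rho$ provided by Mukai's theorem. A full loop therefore lifts to the action of $T_{E_\rho}^2 \cdot [-2]$ on $\Stab^\dagger(Y)$, which is a nontrivial deck transformation. A generator count then identifies the full deck group with $\pi_1(\HH^0)$, yielding contractibility of $\Stab^\dagger(Y)$. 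For the exact sequence~\eqref{eq:mainBB}, one combines (i) the Huybrechts--Macri--Stellari surjectivity $\Aut\Dscr^b(Y) \twoheadrightarrow \Aut^+ \H^*(Y,\ZZ)$ with kernel $\Aut^0\Dscr^b(Y)$, and (ii) the identification of $\Aut^0\Dscr^b(Y)$ with $\pi_1(\Pscr_0^+(Y))$ just obtained, using a connectedness argument: any autoequivalence acting trivially on cohomology must preserve the connected component $\Stab^\dagger(Y)$ and hence act as a deck transformation.

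For the transitivity of $\Aut^0\Dscr^b(Y)$ on spherical objects with a fixed Mukai vector $\rho \in \Delta(Y)$, I would first show that every such $S$ is stable with respect to some $\sigma \in \Stab^\dagger(Y)$: a nontrivial Jordan--H\"older decomposition would produce nonzero Mukai vectors $v(A_i)$ summing to $\rho$ with $\langle v(A_i), v(A_j)\rangle \geq 0$ for $i \neq j$, and the signature of the Mukai form restricted to $\rho^\perp$ rules this out by a direct numerical argument. Mukai's uniqueness of slope-stable spherical sheaves then identifies $S$, up to shift, with the restriction of $E_\rho$ to the chamber of $\sigma$, and the already-established transitivity of the deck group $\Aut^0\Dscr^b(Y)$ on fibres of $\pi$ completes the argument.

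The main obstacle is the wall-crossing analysis in the second step: precisely identifying the monodromy generator of the quotient map around each spherical puncture with $T_{E_\rho}^2\cdot[-2]$, rather than with some uncontrolled autoequivalence, is delicate because the spherical wall arrangement is countable and dense in parts of $\Pscr^+(Y)$. It requires careful book-keeping of heart mutations using the Seidel--Thomas description of spherical twists via distinguished triangles, together with a verification that no additional unaccounted-for monodromy can appear. The fact that in Picard rank~$1$ every $(-2)$-class in $\Nscr(Y)$ is realised by a \emph{unique} spherical vector bundle is what ultimately keeps this combinatorics tractable.
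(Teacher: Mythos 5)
This statement is not proved in the paper: it is quoted wholesale from Bayer--Bridgeland \cite[Theorem 1.3]{BayerBridgeland} (with the surjectivity onto $\Aut^+\H^*(Y,\ZZ)$ supplied by Huybrechts--Macr\`i--Stellari and the covering-space framework by \cite{BridgelandK3}), so your proposal is an attempt to reprove a major cited theorem rather than to reproduce an argument from the text. Your reductions are sound as far as they go: contractibility of $\Stab^\dagger(Y)$ does follow once one knows that $\Stab^\dagger(Y)/\widetilde{\GL^+(2,\RR)}\to\HH^0$ is the \emph{universal} cover, and the exact sequence and the transitivity statement are then extracted essentially as you describe (your stability argument for spherical objects via Jordan--H\"older factors and the signature of the Mukai pairing is the standard one).

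The genuine gap is the step ``a generator count then identifies the full deck group with $\pi_1(\HH^0)$, yielding contractibility.'' Computing the monodromy of the covering around each puncture (it is $T_{E_\rho}^2$, up to shift) and knowing that these monodromies generate the deck group only establishes that $\pi_1(\HH^0)\to\mathrm{Deck}(\pi)$ is \emph{surjective}; the theorem requires it to be \emph{injective}, i.e.\ that the total space is simply connected. No counting of generators can deliver this: the free group on the loops around the punctures surjects onto any group generated by corresponding nontrivial elements, with arbitrarily large kernel, so a priori the cover could be any intermediate cover of $\HH^0$ and need not be simply connected. This injectivity is exactly Bridgeland's conjecture in Picard rank $1$ and is the hard content of \cite{BayerBridgeland}; their proof does not proceed by monodromy bookkeeping at all, but by constructing an explicit flow on the space of stability conditions (moving central charges along affine paths) which deformation-retracts the whole component onto a visibly contractible subset. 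Without a global argument of that nature, your wall-crossing analysis --- however carefully executed around the countable wall arrangement --- cannot close the proof.
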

 
 The following explicit description of the fundamental group  $\pi_1( \Pscr_0^+(Y))$ was obtained by Kawatani: 
 \begin{theorem}[Kawatani,  {\cite[Theorem 1.3]{Kawatani}}]\label{thm:kawatani} Let $Y$ be a K3 surface with Picard rank 1.  The group $ \pi_1( \Pscr_0^+(Y))  \cong \ZZ \times \langle T_{E_\rho}^2 \ | \ \rho \in \Delta^+(Y) \rangle$ is the product of $\ZZ$ acting by even shifts and the free group generated by the squares $T_{E_\rho}^2$ of spherical twists corresponding to all spherical vector bundles $E_\rho$, $\rho \in \Delta^+(Y)$.  
 \end{theorem}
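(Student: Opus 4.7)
The plan is to exhibit $\Pscr_0^+(Y)$ as homotopy equivalent to a principal $\CC^*$-bundle over a punctured simply connected surface, compute $\pi_1$ via the fibration long exact sequence, and then identify the resulting factors with the even shifts and the squared spherical twists using the deck transformation description of Theorem \ref{thm:BK3}. I would first analyse the quotient $B := \Pscr^+(Y)/\CC^*$. Sending $[v]$ to the oriented real 2-plane spanned by $\Re v, \Im v$ realises $B$ as a bundle over the space of oriented positive 2-planes in $\Nscr(Y)_\RR$; since the Mukai form has signature $(2,1)$, this base is the upper half-plane $\HH$. A short coordinate computation shows that the fiber over a 2-plane $P$, namely $\{[v]\in\PP(P_\CC) : \Re v,\Im v \text{ form an oriented basis of } P\}$, is again isomorphic to $\HH$. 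Hence $B$ is contractible and $\Pscr^+(Y)\simeq S^1$.

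Next I would locate the hyperplanes. Using the isotropic parametrization $\tau\mapsto\exp(\tau H)=(1,\tau,d\tau^2/2)$, which satisfies $\langle\exp(\tau H),\exp(\tau H)\rangle=0$ and $\langle\exp(\tau H),\overline{\exp(\tau H)}\rangle=2d(\Im\tau)^2>0$ (hence embeds $\HH$ holomorphically into $B$), the equation $\langle\exp(\tau H),\rho\rangle=0$ for $\rho=(r,l,s)\in\Delta^+(Y)$ is a quadratic in $\tau$ whose discriminant, computed using $q(\rho)=\delta l^2-rs=-1$, yields the unique root $z_\rho=(l\delta+i\sqrt\delta)/(r\delta)\in\HH$ (taking $r>0$). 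Hence $C_\rho:=(\rho^\perp\cap\Pscr^+(Y))/\CC^*\subset B$ is a smooth complex curve. For distinct $\rho,\rho'$, the complex line $\rho^\perp\cap\rho'^\perp$ is spanned by a real vector of positive norm (the plane $\langle\rho,\rho'\rangle$ necessarily has signature $(1,1)$, a $(0,2)$ plane being incompatible with ambient signature $(2,1)$), whose projective class has $\Re v,\Im v$ real proportional and hence lies outside $B$; so $C_\rho,C_{\rho'}$ are disjoint.

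Consequently $B_0:=B\setminus\bigcup_\rho C_\rho$ is the complement of countably many pairwise disjoint contractible complex curves in the contractible complex surface $B$. These curves do not accumulate on compact subsets (the $z_\rho$ accumulate only toward the real boundary of $\HH$), so a standard local Van Kampen argument yields that $\pi_1(B_0)$ is free on meridians $\gamma_\rho$ (one per $\rho\in\Delta^+(Y)$) and $\pi_2(B_0)=0$. The homotopy long exact sequence for the principal $\CC^*$-bundle $\Pscr_0^+(Y)\to B_0$ then produces a central extension
\begin{equation*}
1\to\ZZ\to\pi_1(\Pscr_0^+(Y))\to F(\Delta^+(Y))\to 1,
\end{equation*}
which splits because $H^2(F(\Delta^+(Y)),\ZZ)=0$, giving $\pi_1(\Pscr_0^+(Y))\cong\ZZ\times F(\Delta^+(Y))$.

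Finally I would identify these generators under $\pi_1(\Pscr_0^+(Y))\cong\Aut^0\Dscr^b(Y)$. The generator of $\ZZ$ is a loop in the $\CC^*$-fiber, which corresponds to a full $2\pi$-rotation of central charges; since $[1]$ realises the half-rotation, this generator is the double shift $[2]$. For $\rho\in\Delta^+(Y)$ the meridian $\gamma_\rho$ lifts to the deck transformation of $\Stab^\dagger(Y)\to\Pscr_0^+(Y)$ associated with the wall $\rho^\perp$; since $T_{E_\rho}$ acts on $\H^*(Y,\ZZ)$ as the reflection $s_\rho$ and therefore does not lie in $\Aut^0\Dscr^b(Y)$, while $T_{E_\rho}^2\in\Aut^0\Dscr^b(Y)$ is the next possibility, we expect to match $\gamma_\rho$ with $T_{E_\rho}^2$. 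This last identification --- ruling out the meridian lifting instead to $T_{E_\rho}^{2k}$ for some $k>1$ --- is the main obstacle. It requires a local analysis of Bridgeland's stability manifold near $\rho^\perp$, using that $T_{E_\rho}$ implements the wall-crossing between the two chambers meeting along $\rho^\perp$, so that a single meridian (crossing the wall twice) produces exactly $T_{E_\rho}^2$.
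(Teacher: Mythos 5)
First, note that the paper does not prove this statement: it is quoted verbatim from Kawatani \cite[Theorem 1.3]{Kawatani} (combined with the Bayer--Bridgeland isomorphism $\Aut^0\Dscr^b(Y)\cong\pi_1(\Pscr_0^+(Y))$ of Theorem \ref{thm:mainBB}), so there is no in-paper proof to compare against. Your computation of $\pi_1(\Pscr_0^+(Y))$ as an abstract group is essentially correct and follows the same geometric route as Kawatani: your formulas for $z_\rho$, the disjointness of the curves $C_\rho$, and the discreteness of $\{z_\rho\}$ all check out. You could streamline it considerably, though: the fibration $B\to\HH$ you set up in your first paragraph already shows that $C_\rho$ is the \emph{entire fibre} over $z_\rho$ (since $\rho^\perp_\RR$ is itself a positive $2$-plane, any $v\in\rho^\perp\cap\Pscr^+(Y)$ spans exactly that plane), so $B_0\to\HH^0$ is an $\HH$-bundle and $B_0\simeq\HH^0$. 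This makes your ``standard local Van Kampen argument'' for freeness on meridians -- which as stated is not an off-the-shelf fact for complements of infinitely many disjoint divisors -- into an immediate consequence, and it is exactly the decomposition $\Pscr_0^+(Y)\cong\GL_2^+(\RR)\times\HH^0$ recorded in Remark \ref{rem:kawatani} and \S\ref{sec:introgeom}, which also gives the splitting of your central extension without appealing to $H^2$ of a free group.

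The genuine gap is the one you flag yourself in the last paragraph, and it is not a technicality but the substantive content of the theorem: identifying the image of the meridian $\gamma_\rho$ under $\pi_1(\Pscr_0^+(Y))\cong\Aut^0\Dscr^b(Y)$ with $T_{E_\rho}^2$. Your heuristic (``$T_{E_\rho}\notin\Aut^0$, so $T_{E_\rho}^2$ is the next possibility'') does not even restrict the candidate to a power of $T_{E_\rho}$: a priori the deck transformation attached to $\gamma_\rho$ is just \emph{some} element of $\Aut^0\Dscr^b(Y)$, and since $\Aut^0$ acts trivially on cohomology there is no numerical invariant to pin it down. What is actually needed is Bridgeland's wall-and-chamber analysis of $\Stab^\dagger(Y)$: the description of the boundary of the geometric chamber $U(Y)$, the fact that $E_\rho$ (and only objects with Mukai vector $\pm\rho$) become strictly semistable on the walls lying over $\rho^\perp$, the classification of the boundary walls into types $A^\pm(\rho)$, and the computation of the autoequivalences gluing $\overline{U(Y)}$ to its translates across each wall. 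Only after that bookkeeping does one see that a small loop around $z_\rho$, lifted to $\Stab^\dagger(Y)$, traverses chambers whose gluing maps compose to $T_{E_\rho}^{2}$ exactly (rather than $T_{E_\rho}T_{E_\rho}^{-1}=\id$ or a higher power or a twist along a different spherical object with vector $\rho$). So your write-up correctly reduces the theorem to its hardest step and then leaves that step unproved.
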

 
 \begin{remark}\label{rem:kawatani} We will use these explicit generators of $\Aut^0 \Dscr^b(Y)$ later on.  Note that in general the fundamental group of a topological space does not have a canonical set of generators, and the fundamental groups associated with different base points are not canonically identified,  so let us briefly explain where the generators in Theorem \ref{thm:kawatani} come from.  One can show that $\Pscr_0^+(Y)$ is diffeomorphic to $\GL_2^+(\RR) \times \HH^0$ (see the beginning of \S\ref{sec:SO} below),  where $\HH^0$ is  the upper half plane $\HH$ minus a discrete set of points corresponding to roots in $\Delta^+(Y)$. 
  Hence $\pi_1( \Pscr_0^+(Y)) \cong \ZZ \times \pi_1(\HH^0)$.  Let $\Vscr(Y) \subset \HH^0$ be the open subset obtained by removing the vertical line segments connecting the real line and the punctures (see Figure 1 on page 11 of \cite{BayerBridgeland})\footnote{To see that these vertical line segments do not intersect,  one observes that the real parts of all punctures are distinct.  By Corollary \ref{cor:kawatani} the real part of the point corresponding to $\rho = (r,l,s) \in \Delta^+(Y)$ is $l/r$.  But $l$ and $r$ are coprime,  since $\delta l^2 - rs = -1$,  so $s$ is determined uniquely from $l/r$ and hence $\rho$ is the unique root whose corresponding puncture has real part $l/r$}.  $\Vscr(Y)$ is open and contractible and has a canonical lift to $\Stab^\dagger(Y)$,  as explained in \cite[\S 6]{BridgelandK3}.  Its points are stability conditions corresponding to tiltings of the heart $\coh(Y)$. Pick an arbitrary base point $y$ in $\Vscr(Y)$ 
(since $\Vscr(Y)$ is contractible the choice does not matter) and consider a collection of paths connecting it to punctures and not intersecting the relative interiors of the vertical segments.   The corresponding collection of loops around the punctures in the direction of the orientation (they intersect the vertical segments exactly once) gives a canonical set of generators of $\pi_1(\HH^0,y)$.\end{remark} 
\begin{corollary} \label{cor:orbitreps}
Let $Y$ be a K3 surface with Picard rank 1.
Let $\Gscr\subset \Aut \Dscr^b(Y)$ be the subgroup generated by spherical twists and let $G$ be its image in $\Aut^+ \H^\ast(Y,\ZZ)$. The  map
\begin{align*}
\Sph(Y)/( \langle \ZZ[2],\Gscr \rangle )\r \Delta(Y)/G  \\
S\mapsto v(S)
\end{align*}
is a bijection.
\end{corollary}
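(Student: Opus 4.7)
The plan is to establish well-definedness, surjectivity, and injectivity of the map, relying on the Mukai--Yoshioka description of spherical sheaves together with Theorems \ref{thm:mainBB} and \ref{thm:kawatani}.

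\medskip

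\textbf{Well-definedness.} Since $v(S[2k]) = v(S)$ for every $k\in\ZZ$, the double shift acts trivially on Mukai vectors. The group $\Gscr$ acts on $\H^\ast(Y,\ZZ)$ through its image $G$, and this action on $\Nscr(Y)$ intertwines the tautological action on $\Sph(Y)$ with the action of $G$ on $\Delta(Y)$ via $v$. Hence the composition $\Sph(Y)\xrightarrow{v}\Delta(Y)\twoheadrightarrow \Delta(Y)/G$ is constant on $\ZZ[2]\cdot\Gscr$-orbits, producing the required map.

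\medskip

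\textbf{Surjectivity.} Given $\rho=(r,l,s)\in\Delta(Y)$, the equation $\delta l^2-rs=-1$ forces $r\neq 0$, so either $\rho\in\Delta^+(Y)$ or $-\rho\in\Delta^+(Y)$. In the first case the Mukai--Yoshioka theorem (quoted in \S\ref{sec:K3prelim}) provides the spherical vector bundle $E_\rho$ with $v(E_\rho)=\rho$; in the second case $E_{-\rho}[1]$ does the job, since $v(E_{-\rho}[1])=-v(E_{-\rho})=\rho$. Thus every class in $\Delta(Y)/G$ is hit.

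\medskip

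\textbf{Injectivity.} Suppose $v(S_1)$ and $v(S_2)$ lie in the same $G$-orbit. Pick a lift $\tilde g\in\Gscr$ of the element of $G$ realizing the equality, so that $v(\tilde g^{-1}(S_1))=v(S_2)$. By the last assertion of Theorem \ref{thm:mainBB}, $\Aut^0\Dscr^b(Y)$ acts transitively on spherical objects with a given Mukai vector, hence there exists $\alpha\in\Aut^0\Dscr^b(Y)$ with $S_1=\tilde g\alpha(S_2)$. Kawatani's description (Theorem \ref{thm:kawatani}) writes $\Aut^0\Dscr^b(Y)=\langle [2]\rangle\times\langle T_{E_\rho}^2:\rho\in\Delta^+(Y)\rangle$, and every generator lies in $\ZZ[2]\cdot\Gscr$, the shift being even and each $T_{E_\rho}^2$ being a spherical twist. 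Since $[2]$ is central, $\ZZ[2]\cdot\Gscr$ is a subgroup of $\Aut\Dscr^b(Y)$, so $\tilde g\alpha\in\ZZ[2]\cdot\Gscr$ and $S_1$, $S_2$ represent the same class in $\Sph(Y)/(\ZZ[2]\times\Gscr)$.

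\medskip

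The main conceptual ingredient is the combination of Theorems \ref{thm:mainBB} and \ref{thm:kawatani}: without the explicit generation of $\Aut^0\Dscr^b(Y)$ by even shifts and squares of spherical twists (all of which are visibly inside $\ZZ[2]\cdot\Gscr$), injectivity would be out of reach. Surjectivity and well-definedness are then essentially formal consequences of Mukai--Yoshioka and of the compatibility of $v$ with derived autoequivalences.
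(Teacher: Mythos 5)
Your proposal is correct and follows essentially the same route as the paper: surjectivity via the Mukai--Yoshioka existence of the spherical bundle $E_\rho$, and injectivity by lifting the $G$-element to $\Gscr$ and then invoking the transitivity statement of Theorem \ref{thm:mainBB} together with Kawatani's explicit description of $\Aut^0\Dscr^b(Y)$ as generated by $[2]$ and the squares $T_{E_\rho}^2$, all of which lie in $\ZZ[2]\cdot\Gscr$. The only (harmless) cosmetic difference is that you realize a root with $r<0$ by $E_{-\rho}[1]$, whereas the paper implicitly uses that the reflection $s_\rho$ sends $\rho$ to $-\rho$, so every $G$-orbit meets $\Delta^+(Y)$.
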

\begin{proof} Surjectivity follows from the fact that every $\rho\in \Delta^+(Y)$ is the Mukai vector of the spherical vector bundle $E_\rho$.
To prove injectivity assume $S,T\in \Sph(Y)$ are such that $f(v(S))=v(T)$ for $f\in G$. Lifting $f$ to $F\in \Gscr$ we obtain
$v(F(S))=v(T)$. But then by Theorem \ref{thm:kawatani} we have $T=F'(F(S))$ for ${F'\in \ZZ[2] \times \{H^2\mid H\in \Gscr\}\subset \langle \ZZ[2],\Gscr \rangle}$. This finishes the proof.
\end{proof}

\subsection{Fuchsian groups}
\label{sec:fuchsian}
We refer to \cite{Katok} for the basics on Fuchsian groups. The group $\PSL_2(\RR)$ acts on the upper half plane model $\HH$ of the hyperbolic plane by \emph{M\"obius transformations}, i.e.
\[
z\mapsto \frac{az+b}{cz+d}
\]
with
\[
\begin{pmatrix}
a&b\\
c&d
\end{pmatrix}\in \PSL_2(\RR).
\]
In fact $\PSL_2(\RR)$ is exactly the Lie group of orientation-preserving isometries of the hyperbolic plane.   
Note that $\HH\subset \CC=\AA^1(\CC)\subset \PP^1(\CC)$. We let $\overline{\HH}$ be the closure of $\HH$ in $\PP^1(\CC)$.
We have $\overline{\HH}=\HH\cup \PP^1(\RR)=\HH\cup \RR\cup \{\infty\}$. The action of the group $\PSL_2(\RR)$ extends to $\overline{\HH}$. 
\begin{definition} A \emph{Fuchsian group} is a discrete subgroup of $\PSL_2(\RR)$.
\end{definition} 

\begin{theorem}[{\cite[Theorem 2.2.6, Corollary 2.2.7]{Katok}}]\label{thm:fuchsian} Let $\Gamma \subset \PSL_2(\RR)$ be a subgroup. Then the following are equivalent: 
\begin{enumerate}
\item $\Gamma$ is Fuchsian. 
\item $\Gamma$ acts properly discontinuously on $\HH$ (that is,  for any compact set $K \subset \HH$ there are only finitely many $g \in \Gamma$ with $gK \cap K \neq \varnothing$). 
\item All orbits $\Gamma x$ for $x\in \HH$ are discrete subsets of $\HH$. 
\end{enumerate} \end{theorem}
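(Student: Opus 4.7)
The plan is to establish the cycle $(1) \Rightarrow (2) \Rightarrow (3) \Rightarrow (1)$. The central structural input I would exploit throughout is that $\HH \cong \PSL_2(\RR)/K$, where $K = \Stab(i)$ is compact (the image of $\SO(2)$), so the action of $\PSL_2(\RR)$ on $\HH$ is \emph{proper}: for every compact $C \subset \HH$, the set $S_C := \{g \in \PSL_2(\RR) \mid g C \cap C \ne \emptyset\}$ is compact in $\PSL_2(\RR)$. This follows from the standard fact that $\PSL_2(\RR) \to \HH$, $g \mapsto g \cdot i$, is a fiber bundle with compact fiber $K$.

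For $(1) \Rightarrow (2)$, I would combine properness with the elementary observation that a discrete subgroup of a Lie group meets any compact set in finitely many points. Applied to $S_C$ above, this yields that $\Gamma \cap S_C$ is finite for every compact $C$, which is exactly proper discontinuity. For $(2) \Rightarrow (3)$, I would argue by contradiction: if some orbit $\Gamma x$ had an accumulation point $y \in \HH$, choose a compact set $C$ containing both $x$ and a neighborhood of $y$. Then infinitely many distinct $g_n \in \Gamma$ satisfy $g_n x \in C$, which forces $g_n C \cap C \ne \emptyset$ and contradicts proper discontinuity.

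For $(3) \Rightarrow (1)$, I would argue contrapositively: assuming $\Gamma$ is not discrete, one can extract a sequence of distinct $\gamma_n \in \Gamma$ with $\gamma_n \to \id$ in $\PSL_2(\RR)$. The temptation is to conclude immediately that the orbit of an arbitrary $x$ accumulates at $x$, but the \textbf{main obstacle} is that point stabilizers in $\PSL_2(\RR)$ are non-trivial compact circle groups, so \emph{a priori} it could happen that $\gamma_n x = x$ for every $n$, giving a trivially discrete orbit. To overcome this, I would invoke the classification of non-identity M\"obius transformations of $\HH$: elliptic elements fix exactly one point of $\HH$, while parabolic and hyperbolic elements fix none. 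Consequently $\bigcup_n \mathrm{Fix}_{\HH}(\gamma_n)$ is at most countable, so I may pick $x \in \HH$ outside this set. Then $\gamma_n x \ne x$ for every $n$ while $\gamma_n x \to x$ by continuity of the action, exhibiting $x$ as a non-isolated point of $\Gamma x$ and contradicting (3).

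The only genuinely delicate step is therefore $(3) \Rightarrow (1)$, where the content is concentrated in the elementary but essential fixed-point analysis of M\"obius transformations; the other implications are formal consequences of properness of the $\PSL_2(\RR)$-action on $\HH$.
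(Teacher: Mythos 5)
This theorem is quoted verbatim from Katok's book and the paper supplies no proof of its own, so there is nothing internal to compare against; your argument is correct and is essentially the standard textbook proof (properness of the $\PSL_2(\RR)$-action on $\HH\cong \PSL_2(\RR)/\mathrm{PSO}(2)$ for $(1)\Rightarrow(2)\Rightarrow(3)$, and a fixed-point analysis for $(3)\Rightarrow(1)$). In particular you correctly isolate and resolve the one genuinely delicate point: in $(3)\Rightarrow(1)$ a sequence of distinct $\gamma_n$ converging to the identity could a priori all fix the chosen base point, and you rule this out by observing that each nontrivial element of $\PSL_2(\RR)$ fixes at most one point of $\HH$ (at most two would already suffice), so a point $x$ outside the countable union of fixed-point sets yields $\gamma_n x\neq x$ with $\gamma_n x\to x$.
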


Theorem \ref{thm:fuchsian} implies that for a Fuchsian group $\Gamma$ a $\Gamma$-stabiliser of a point $x \in \HH$ is finite (otherwise there is a compact set $K$ such that $\varnothing \neq gK \cap  K \ni x$ for infinitely many $g \in \Stab(x)$).  In fact,  every non-trivial $\Gamma$-stabiliser of a point is a maximal finite cyclic subgroup (see \cite[p. 71 and Theorem 2.3.5]{Katok}).  The maximality uses the fact that if $g \in \PSL(2,\ZZ)$ has a fixed point $x \in \HH$,  then this fixed point is unique (see \cite[{\S 2.1}]{Katok}).  Maximal finite cyclic subgroups of $\Gamma$ are referred to as \emph{elliptic subgroups}.  Conversely,  one can show that every elliptic subgroup of $\Gamma$ is the stabiliser of a point $x \in \HH$.  We say that an element $g \in \Gamma$ is \emph{elliptic} if $\langle g \rangle$ is an elliptic subgroup.  A point $x  \in \HH$ is \emph{elliptic of order $k$} if $gx = x$ for an elliptic element $g \in G$ of order $k$.  A \emph{parabolic} element $g \in \Gamma$ is a non-trivial element stabilising a point $x \in \RR \cup \{\infty \}$ for the action of $G$ extended to $\overline{\HH}$.  A point $x \in \RR \cup \{\infty \}$ is a \emph{cusp} if $gx = x$ for a parabolic element $g \in \Gamma$.

Theorem \ref{thm:fuchsian} implies in particular that the topological quotient $V := \HH/\Gamma$ is well defined and is an orbifold Riemann surface.  The orbifold points of $V$ correspond to elliptic elements of $\Gamma$ up to conjugacy and the cusps correspond to parabolic elements up to conjugacy.  The orbifold fundamental group $\pi_1(V)$ is isomorphic to $\Gamma$.
The following lemma says that the quotient is not pathological.
\begin{lemma} Let $\Gamma$ be a Fuchsian group. The orbifold points are discrete in $\HH/\Gamma$.
\end{lemma}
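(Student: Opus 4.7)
The plan is to exploit the proper discontinuity of $\Gamma$ from Theorem~\ref{thm:fuchsian}. An orbifold point of $\HH/\Gamma$ is the image under the quotient map $q\colon \HH \to \HH/\Gamma$ of some $y \in \HH$ whose stabiliser $\Gamma_y$ is non-trivial, which by the discussion preceding the lemma means that $y$ is the unique $\HH$-fixed point of some non-trivial elliptic element of $\Gamma$ (parabolic and hyperbolic elements of $\mathrm{PSL}_2(\RR)$ fix no point of $\HH$). Hence, to show discreteness in the quotient, it suffices to produce, for every $y_0 \in \HH$, an open neighbourhood $U \subset \HH$ of $y_0$ containing no elliptic fixed point apart from possibly $y_0$ itself. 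Granted such a $U$, the set $q(U) \subset \HH/\Gamma$ is open because $q^{-1}(q(U)) = \bigcup_{g \in \Gamma} gU$ is a union of open sets; and any orbifold point $[y] \in q(U)$ has a representative $y' \in U$ with $\Gamma_{y'} = g\Gamma_y g^{-1} \neq 1$ for suitable $g$, whence $y' = y_0$ and $[y] = [y_0]$.

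To construct $U$, fix a compact neighbourhood $K$ of $y_0$, say a closed hyperbolic disc centred at $y_0$. By Theorem~\ref{thm:fuchsian} the set
\[
S := \{\, g \in \Gamma \mid gK \cap K \neq \varnothing \,\}
\]
is finite. If $p \in K$ is fixed by some non-trivial $g \in \Gamma$, then $p = gp \in K \cap gK$, so $g \in S$. Now each non-trivial element of $\mathrm{PSL}_2(\RR)$ has at most one fixed point in $\HH$ (exactly one when elliptic, none when parabolic or hyperbolic), so the set of elliptic fixed points lying in $K$ is contained in the finite collection $\{\, p_g \mid g \in S \setminus \{1\} \text{ elliptic} \,\}$, hence is itself finite. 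Removing from $\mathrm{int}(K)$ the finitely many such points distinct from $y_0$ yields the desired $U$.

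The only non-formal step is the translation of proper discontinuity into finiteness of elliptic fixed points in a compact set, and as sketched above this is immediate from the observation that any element of $\Gamma$ fixing a point of $K$ must send $K$ to a set meeting $K$. The remaining verifications (openness of $q(U)$ and transport of stabilisers along orbits) are purely formal, so no real obstacle is expected.
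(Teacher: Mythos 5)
Your proof is correct and follows essentially the same route as the paper: the set of elliptic fixed points is $\Gamma$-invariant and discrete in $\HH$, and this descends to discreteness in the quotient. The only difference is that you derive the discreteness in $\HH$ directly from proper discontinuity (Theorem \ref{thm:fuchsian}) and spell out the descent, whereas the paper cites \cite[Corollary 2.2.8]{Katok} for the former and leaves the latter implicit.
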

\begin{proof}
This follows from the fact that the set of orbifold points is $\Gamma$-invariant and discrete in $\HH$ (see \cite[Corollary 2.2.8]{Katok}).
\end{proof}
An important piece of data associated with a Fuchsian group is  a \emph{fundamental domain} (or \emph{region}).
\begin{definition}[{\cite[\S3.1]{Katok}}] Let $\Gamma$ be a Fuchsian group.
A closed region $\Fscr\subset \HH$ (i.e.\ the closure of a non-empty open set $\Fscr^\circ$, the interior of $\Fscr$) is said to be a \emph{fundamental domain (region)} for $\Gamma$
if 
\begin{enumerate}[label=(\emph{\roman*}), ref=\emph{\roman*}]
\item $\bigcup_{g\in \Gamma}\Fscr=\HH$.
\item $\Fscr^\circ \cap g(\Fscr^\circ)=\varnothing$ for all $g\in \Gamma-\{1\}$.
\end{enumerate}
\end{definition}
A hyperbolically convex fundamental domain always exists,  see \cite[\S3.2]{Katok}. 
In the sequel we will need the following result on the existence of hyperbolically convex fundamental domains satisfying an additional condition.  We define a \emph{translation} $t_\alpha$ for $\alpha\in \RR$ to be  a M\"obius transformation of the form $z\mapsto z+\alpha$. 
\begin{lemma} \label{lem:closure}
Let $\Gamma$ be a Fuchsian group which is normalised by some translation $t_\alpha$, $\alpha>0$ such that $\langle \Gamma,t_\alpha\rangle$ is still Fuchsian.
Then $\Gamma$ has a hyperbolically convex fundamental domain containing $\infty$ in its closure in $\overline{\HH}$.
\end{lemma}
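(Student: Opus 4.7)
The plan is to construct the desired domain as a Dirichlet region $D_p(\Gamma)$ based at a point $p$ of sufficiently large imaginary part, and then check that the vertical ray issuing upward from $p$ stays inside it, placing $\infty$ in its closure.

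First I would apply Shimizu's lemma to the Fuchsian group $\Gamma':=\langle \Gamma, t_\alpha\rangle$: since $\Gamma'$ is discrete in $\PSL_2(\RR)$ and contains the translation $t_\alpha$, every $g=\left(\begin{smallmatrix}a&b\\c&d\end{smallmatrix}\right)\in\Gamma'$ with $c\neq 0$ satisfies $|c|\geq 1/\alpha$; in particular this holds for every such $g\in\Gamma$. I then pick a base point $p=x_0+iy_0\in\HH$ with $y_0>\alpha$ that is not fixed by any non-identity element of $\Gamma$; the set of elliptic fixed points of $\Gamma$ being discrete in $\HH$ (by proper discontinuity, as recalled in the previous lemma), such a $p$ exists. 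By standard Fuchsian theory, the closed Dirichlet region $D_p(\Gamma)=\{z\in\HH:d(z,p)\leq d(z,g(p))\text{ for all }g\in\Gamma\}$ is then a convex fundamental domain for $\Gamma$ (see e.g.\ \cite[Theorem 3.2.2]{Katok}).

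The remaining task is to verify that the vertical ray $R=\{p+it:t>0\}$ lies in $D_p(\Gamma)$, which immediately gives $\infty\in\overline{D_p(\Gamma)}$ in $\overline{\HH}$. If $g\in\Gamma\setminus\{1\}$ has $c=0$, then $g$ is a translation $t_\beta$ with $\beta\ne 0$, and a direct computation using the hyperbolic distance formula on $\HH$ yields $d(p+it,p+\beta)>d(p+it,p)$ for every $t>0$. If $g$ has $c\ne 0$, then Shimizu's bound combined with $y_0>\alpha$ gives $|cp+d|^2\ge c^2 y_0^2>1$, so writing $g(p)=x_1+iy_1$ we have $y_1=y_0/|cp+d|^2<y_0$; the distance formula then reduces the required inequality $d(p+it,p)<d(p+it,g(p))$ to $t^2 y_1/y_0<(x_0-x_1)^2+(y_0+t-y_1)^2$, which holds since the left hand side is strictly smaller than $t^2$ while the right hand side strictly exceeds $t^2$.

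The only nontrivial ingredient is Shimizu's lemma, which turns the abstract hypothesis that $\langle \Gamma, t_\alpha\rangle$ is Fuchsian into a uniform lower bound on $|c|$; without it one could not choose $p$ so that every non-translational $\Gamma$-translate of $p$ sits strictly below $p$. Once that is secured, the remainder is a routine hyperbolic distance comparison.
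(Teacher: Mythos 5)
Your proof is correct, but it takes a genuinely different route from the paper. The paper constructs a \emph{Ford domain}: it intersects the exteriors $E_g=\{z\mid |cz+d|>1\}$ of the isometric circles of the non-translation elements, handling the two cases ($t_\alpha\in\Gamma$ or not) via a coset decomposition of $\langle\Gamma,t_\alpha\rangle$ over $\Gamma$ and a lemma on assembling fundamental domains from cosets; the resulting region visibly contains a neighbourhood of $\infty$. You instead take a \emph{Dirichlet region} based at a point $p$ of large imaginary part and use Shimizu's lemma (applied to $\langle\Gamma,t_\alpha\rangle$, which is where the hypothesis that this group is Fuchsian enters) to force $\Im g(p)<\Im p$ for every non-translation $g\in\Gamma$, after which the distance comparison along the vertical ray is routine and correct as you carried it out; the translation case and the choice of $p$ avoiding elliptic fixed points are also handled properly. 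Each approach has its merits: yours is shorter and leans on two standard facts (Shimizu's lemma and convexity of Dirichlet regions), whereas the paper's argument is self-contained modulo the definition of Ford domains and, more importantly, produces the explicit ``Ford domains of infinite width'' that are then used verbatim in the examples of \S\ref{sec:examples} to locate the fixed points of the generating reflections. If one adopted your proof, that explicit construction would have to be supplied separately for the later sections.
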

\begin{proof} For $g=\left(\begin{smallmatrix} a&b\\c&d\end{smallmatrix}\right)$ with $\det g=1$ an $c \neq 0$ define
\[
E_g=\{z\in \HH\mid |cz+d|>1\}.
\]
Note that $E_g$ is a half-plane in the hyperbolic plane, and in particular it is hyperbolically convex.
Assume first that some $t_\alpha$ is actually contained in $\Gamma$ and let $\alpha$ be minimal with this property.  Let $\Gamma_0$ be the subgroup of $\Gamma$ generated by $t_\alpha$. Then according to \cite[Definition 5.6]{CarolineSeries} we can take $\Fscr$ to be the  \emph{Ford domain}
\[
\Fscr=\overline{\Sigma_{\theta,\alpha} \cap \bigcap_{g\in \Gamma-\Gamma_0} E_g}
\]
with 
\[
\Sigma_{\theta,\alpha}=\{z\in \HH\mid \theta <\Re z<\theta+\alpha\}
\]
for $\theta \in \RR$ arbitrary.

Assume now that $\Gamma$ does not contain a non-trivial translation and put $\overline{\Gamma}=\langle \Gamma,t_\alpha\rangle$. 
Then $\overline{\Gamma}=\coprod_{n\in \ZZ} t_\alpha^n \Gamma$.  In particular,  $\overline{\Gamma}-\overline{\Gamma}_0=\{t_\alpha^ng\mid n\in \ZZ, g\in \Gamma-\{1\}\}$.
Note that $E_{t_\alpha^ng}=E_g$ for any $g, \alpha$ and $n$.  It follows that $\bar{\Gamma}$ has a fundamental domain
\[
\overline{\Sigma_{\theta,\alpha} \cap \bigcap_{g\in \Gamma-\{1\}} E_g}.
\]
From the coset decomposition and Lemma \ref{lem:coset} below we then get that $\Gamma$ has a fundamental domain
\[
\bigcup_{n} t^n_\alpha(\overline{\Sigma_{\theta,\alpha} \cap \bigcap_{g\in \Gamma-\{1\}} E_g})=\overline{\bigcap_{g\in \Gamma-\{1\}} E_g}
\]
Such a domain may be called a Ford domain of infinite width.  Here we have used that the intersection is obviously locally finite and furthermore
$t_\alpha(E_g)=E_{t_\alpha g t^{-1}_\alpha}$ implying that $\bigcap_{g\in \Gamma-\{1\}} E_g$ is $t_\alpha$-invariant.
\end{proof}
We have used the following standard lemma in the proof above.
\begin{lemma}
\label{lem:coset}
Let $\Lambda\subset \Gamma$ be Fuchsian groups and let $\Gamma=\coprod_{i\in I} \Lambda \gamma_i$ be a coset decomposition. Assume that $\Fscr$ is a fundamental domain for $\Gamma$
such that the collection of closed sets  $(\gamma_i \Fscr)_i$ is locally finite. Then $\bigcup_{i\in I} \gamma_i \Fscr$ is a fundamental domain for $\Lambda$.
\end{lemma}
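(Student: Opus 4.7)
The plan is to verify directly the two defining properties of a fundamental domain for $\Dscr := \bigcup_{i\in I} \gamma_i \Fscr$. First, local finiteness of $(\gamma_i\Fscr)_i$ guarantees that $\Dscr$ is closed, and combined with the density of $\gamma_i \Fscr^\circ$ in $\gamma_i \Fscr$ it also gives $\overline{\Dscr^\circ} = \Dscr$, so $\Dscr$ is a closed region in the paper's sense. The covering property $\bigcup_{\lambda\in\Lambda}\lambda\Dscr = \HH$ is immediate: given $x \in \HH$, pick $g \in \Gamma$ with $x \in g\Fscr$ (using that $\Fscr$ is a fundamental domain for $\Gamma$) and write $g = \lambda\gamma_i$ via the coset decomposition, so that $x \in \lambda\gamma_i\Fscr \subset \lambda\Dscr$.

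The real content is to show $\Dscr^\circ \cap \lambda\Dscr^\circ = \varnothing$ for every $\lambda \in \Lambda\setminus\{1\}$. The key auxiliary fact I would establish is that the set $N := \bigcup_{i\in I}\gamma_i\,\partial\Fscr$ is closed and nowhere dense: closedness follows from local finiteness, and if a non-empty open $V$ were contained in $N$, shrinking $V$ so that it meets only finitely many $\gamma_i\Fscr$ would exhibit $V$ as a subset of a finite union of nowhere dense closed sets $\gamma_{i_\ell}\partial\Fscr$, which is impossible. Now suppose for contradiction that $\Dscr^\circ \cap \lambda\Dscr^\circ$ is non-empty for some $\lambda \in \Lambda \setminus \{1\}$. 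Since $N$ and $\lambda N$ are closed and nowhere dense, this non-empty open set contains a point $y$ outside $N \cup \lambda N$. From $y \in \Dscr \setminus N$ I get $y \in \gamma_i\Fscr^\circ$ for some $i$, and similarly $y \in \lambda\gamma_j\Fscr^\circ$ for some $j$. Translating by $\gamma_i^{-1}$ yields $\Fscr^\circ \cap (\gamma_i^{-1}\lambda\gamma_j)\Fscr^\circ \neq \varnothing$, so the fundamental domain property of $\Fscr$ for $\Gamma$ forces $\gamma_i^{-1}\lambda\gamma_j = 1$. This places $\gamma_i$ and $\gamma_j$ in the same left $\Lambda$-coset, so by distinctness of the coset representatives $i = j$, and then $\lambda = 1$, contradicting the choice of $\lambda$.

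The main obstacle I expect is the mismatch between $\Dscr^\circ$ and $\bigcup_i \gamma_i\Fscr^\circ$: points lying on shared boundary arcs between adjacent translates $\gamma_i \Fscr$ and $\gamma_{i'}\Fscr$ can be interior points of the union $\Dscr$ without belonging to any single $\gamma_i\Fscr^\circ$, so one cannot simply take a point of $\Dscr^\circ \cap \lambda\Dscr^\circ$, lift it back to $\Fscr^\circ$, and invoke the fundamental domain axiom for $\Gamma$. The nowhere-density argument for $N$, which uses local finiteness together with the density of $\Fscr^\circ$ in $\Fscr$, is precisely what permits replacing the original point by a nearby witness $y$ sitting simultaneously in two \emph{open} pieces $\gamma_i\Fscr^\circ$ and $\lambda\gamma_j\Fscr^\circ$; once that is secured, the rest of the argument is a routine manipulation of cosets.
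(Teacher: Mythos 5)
Your argument is correct and complete. It is worth noting that the paper does not actually prove this lemma: it simply cites an external reference (Umemoto's notes on Dirichlet fundamental domains, Theorem 12) and remarks that the argument there, written for finitely many cosets, goes through under the local finiteness hypothesis. Your proposal supplies precisely the verification that the paper leaves to the reader. The two routine points (closedness of $\Dscr=\bigcup_i\gamma_i\Fscr$ and the covering property via the coset decomposition) are handled as one would expect, and you correctly identify the only genuine subtlety, namely that a point of $\Dscr^\circ\cap\lambda\Dscr^\circ$ need not lie in any single $\gamma_i\Fscr^\circ$, since interior points of the union can sit on shared boundary arcs of adjacent translates. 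Your fix --- showing $N=\bigcup_i\gamma_i\,\partial\Fscr$ is closed and nowhere dense (using local finiteness to reduce to a finite union of the nowhere dense closed sets $\gamma_i\partial\Fscr$) and then choosing a witness $y\notin N\cup\lambda N$ --- is exactly the right device, and the final coset manipulation forcing $\gamma_i^{-1}\lambda\gamma_j=1$, hence $i=j$ and $\lambda=1$, is clean. (A minor terminological quibble: $\gamma_i=\lambda\gamma_j$ places $\gamma_i$ and $\gamma_j$ in the same coset $\Lambda\gamma_j$, which with the paper's convention is a \emph{right} coset of $\Lambda$; this does not affect the argument.)
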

\begin{proof}
See e.g.\ \cite[Theorem 12]{YurikoUmemoto}. The proof in loc.\ cit.\ assumes that $I$ is finite,  but one checks that it goes through under the local finiteness hypothesis.
\end{proof}

Recall the standard definition of the Hecke congruence subgroups,  which are examples of Fuchsian groups:\footnote{We take our congruence subgroups to be subgroups of $\PSL_2(\RR)$ rather than $\SL_2(\RR)$ since we are mainly interested in their action on the upper half plane $\HH$ by M\"obius transformations.}
\begin{align*} 
\Gamma_0(n) := \bigg\{  \begin{pmatrix}  a & b \\ c & d\end{pmatrix} \in \PSL_2(\ZZ) \ | \ c \equiv 0 \pmod n \bigg\}
\end{align*} 
\begin{definition}\label{def:AL} Let $e\divides n$ satisfy $e>0$. An \emph{Atkin–Lehner element}\footnote{Traditionally an Atkin-Lehner element is normalised to have determinant $1$ which involves dividing by $\sqrt{e}$. 
We do not
do this here since we consider $\Gamma^+_0(n)$ as a subgroup of $\PGL^+_2(\QQ)$.}
 associated to $e$ is a matrix with determinant $e$ of the form 
\[
\begin{pmatrix}
ae&b\\
cn & de
\end{pmatrix}\in M_2(\ZZ).
\] 
It is a trivial exercise to show that an Atkin-Lehner element associated to $e$ exists if and only if $e$ is an \emph{exact divisor} of $n$, i.e.\ $\gcd(e,n/e)=1$.  
The set of the images of all Atkin-Lehner elements form a group \footnote{Let $e$ and $e'$ be exact divisors of $n$.  Then one checks that,  up to the scalar factor $\gcd(e,e')$,  the product of two Atkin-Lehner corresponding to $e$ and $e'$ respectively is an Atkin-Lehner corresponding to $ee'/ \gcd(e,e')^2$, which is again an exact divisor of $n$. } which is referred to as the \emph{Atkin-Lehner} group and is denoted by
$\Gamma^+_0(n)\subset \PGL_2^+(\QQ)$.
\end{definition} 
The group $\Gamma_0^+(n)$  contains $\Gamma_0(n)$, and for square free $n$,  it is its normaliser (see \cite[\S 3]{ConwayNorton}).  We have $\Gamma^+_0(n)/\Gamma_0(n)\cong (\ZZ/2\ZZ)^c$ where $c$ the number of distinct prime factors in $n$.

The special Atkin-Lehner element (associated to $e = n$)
\[ 
f_n = \begin{pmatrix}
               0 & -1 \\ n & 0
\end{pmatrix} \in \Gamma_0^+(n)
\]
is called the \emph{Fricke involution}. 

 It is clear that $f_n$ normalises $\Gamma_0(n)$.  The  \emph{Fricke group} is defined as
  \[ 
 F(n) := \langle \Gamma_0(n), f_n \rangle \subseteq \Gamma_0^+(n)
 \]
and one has $F(n)/\Gamma_0(n)=\ZZ/2\ZZ$. 
So $F(n) = \Gamma_0^+(n)$ if and only if $n$ is a prime power. Hence the smallest $n$ for which $F(n)$ and $\Gamma^+_0(n)$ differ is $n=6$. For example,  we have
\[ 
\begin{pmatrix}
2 & -1 \\ 6 & -2
\end{pmatrix}, 
\begin{pmatrix}
3 & -1 \\ 12 & -3
\end{pmatrix} \in \Gamma_0^+(6) \setminus F(6).
\]

The groups $\Gamma_0^+(n)$ and the corresponding modular curves $X_0^+(n) := \HH / \Gamma_0^+(n)$ have been studied extensively.  For instance,  Ogg \cite{Ogg} observed that the primes $p$ for which $X_0^+(p)$ has genus $0$ are precisely the prime factors of the order of the Monster group.   

The curves $X_0^+(n)$ are non-compact and the groups $\Gamma_0^+(n)$ have a finite number of elliptic points and cusps (in other words, they are \emph{orbifold Riemann surfaces of finite type}).  This is a consequence of the fact that all these groups are commensurable with $\PSL_2(\ZZ)$,  for which the fact is classical.  For $n$ square-free $\Gamma_0^+(n)$ is known to have a unique cusp (see e.g. \cite{ChuaLang}). 

\begin{definition}\label{def:pointrefl} Let $z_0 \in \HH$.  The \emph{point reflection} at $z_0$ is a M\"obius transformations which sends any $z \in \HH$ to the point $z'$ on the geodesic through $z_0$ and $z$ such that $|z_0z| = |z_0z'|$.  Explicitly, if $z_0 = a+bi$ the point reflection at $z_0$ is represented by 
\[
\begin{pmatrix}
a & - (a^2+b^2) \\ 
1 & -a 
\end{pmatrix} \in \PSL(2,\RR).
\] \end{definition}

Note that, for instance, the Fricke involution $f_n$ is a point reflection at $\frac{i}{\sqrt{n}}$. 
\section{Geometric considerations around $\Pscr^+(Y)$, $\Pscr_0^+(Y)$} \label{sec:SO}
\subsection{Introduction}
\label{sec:introgeom}
In this section we give a more or less self-contained discussion of some results of Kawatani in \cite{Kawatani}.
Note that  $\Pscr(Y) \subset \Nscr(Y) \otimes \CC$,  as introduced in \S\ref{sec:K3prelim} is equipped
with obvious commuting $O(q,\RR)$ and $\GL_2^+(\RR)$-actions (the latter is free and is obtained via identifying $\CC\cong \RR^2$).
We let $O^+(q,\RR)$ be the index two subgroup of $O(q,\RR)$ which preserves the connected component $\Pscr^+(Y)$.

Following \cite{BridgelandK3} Kawatani considers the morphism 
\begin{align*}
e:\HH &\r \Pscr^+(Y) \\
z &\mapsto \exp(zH)
\end{align*}
and shows that this defines a $\GL_2^+(\RR)$-equivariant isomorphism
\begin{align*}
\HH\times \GL^+_2(\RR) &\cong\Pscr^+(Y) \\
(z,g) &\mapsto exp(zH)\cdot g.
\end{align*}
In particular we may transfer the standard hyperbolic metric on $\HH$ to $\Pscr^+(Y)/\GL^+_2(\RR)$. In \S\ref{sec:hyperboloid},  \ref{sec:BK} we give an intrinsic definition 
for this hyperbolic metric by showing that $\Pscr^+(Y)/\GL^+_2(\RR)$ may be identified with the hyperboloid model for the hyperbolic plane. 
We also give a convenient model of $\Pscr^+(Y)/\GL^+_2(\RR)$ as the open subset $\PP(\Nscr(Y)_{\RR})_{<0}$ of $\PP(\Nscr(Y)_{\RR})$ consisting of negative vectors for the Mukai quadratic form.

This identification is  such that $\overline{\{\rho^\perp\mid \rho\in\Delta^+(Y)\}}\subset \Pscr^+(Y)/\GL^+_2(\RR)$ is identified  with 
$\{\bar{\rho}\mid \rho\in \Delta^+(Y)\}\subset \PP(\Nscr(Y)_{\RR})_{<0}$
and the latter is again identified with a discrete set of points in $\HH$ via the formula \eqref{eq:rhoformula} below. Hence if we use $(-)_0$ or $(-)^0$ to denote the
complement of root data then we have the identifications
\[
\Pscr_0^+(Y)/\GL^+_2(\RR)\cong \PP(\Nscr(Y)_{\RR})^0_{<0} \cong \HH^0.
\]

\subsection{The hyperboloid model}\label{sec:hyperboloid}
Let $W$ be a real vector space of dimension ${n+1}$ equipped with a quadratic form~$q$ of signature $(n,1) = (+\cdots +,-)$. Let $\langle-,-\rangle$ be the corresponding pairing.
In particular, $\langle x,x\rangle=2q(x)$.
Put
\[
\Gr(n,W)_{>0}:=\{V\subset W\mid \dim V=n \text{ and }  q\vert_V\text{ is positive definite}\}.
\]
By sending $V$ to $V^{\perp_q}$ and using the fact that $q$ has signature $(n,1)$ we see that\footnote{We use the convention that $\PP(W)=
(W-\{0\})/\RR^\ast$ (not the dual).}
\[
\Gr(n,W)_{>0}\cong \PP(W)_{<0}:= \{\bar{x}\in \PP(W)\mid q(x)<0\}.
\]
Consider
\[
W_{-1}:=\{x\in W\mid q(x)=-1\}.
\]
If $p_0\in W_{-1}$ then $\langle p_0, x\rangle \neq 0$ for any  $x \in W_{-1}$ (since 
$\langle p_0, x\rangle=0$ implies $x\in p_0^\perp$ and hence $q(x)>0$, because of the signature of $q$).
Thus the sign of the function $\langle p_0 ,-\rangle$ divides $W_{-1}$
into two connected components which are exchanged by $x\mapsto -x$ (it is easy to see that there are precisely two components).  Let $W_{-1}^+$ be the component which contains $p_0$ (i.e. $\langle p_0, -\rangle \vert_{W_{-1}^+}<0$).
Then
\begin{equation}
\label{eq:iso}
W_{-1}^+\r \PP(W)_{<0}:x\mapsto \bar{x}
\end{equation}
is a diffeomorphism. Moreover one can check that the Lorentzian metric 
$ds^2=q(dx)$ on $W$ restricts to a Riemannian metric 
on $W_{-1}$.
The pair $(W_{-1}^+, q(dx)\vert_{W_{-1}^+})$ is the so-called \emph{hyperboloid} model for hyperbolic space.
 The hyperboloid metric on $W_{-1}^+$ may be transferred to $\PP(W)_{<0}$ via~\eqref{eq:iso}. It is obvious from the construction that $O(q,\RR)$ acts by isometries on $W_{-1}^+$ and $\PP(W)_{<0}$.
 \subsection{Application to K3 surfaces}\label{sec:BK}
Now let $Y$ be a K3 surface with Picard rank $1$.  We are going to use the notations and definitions from \S\ref{sec:K3prelim}.  Recall from
\S\ref{sec:K3prelim} that the quadratic form induced by the Mukai pairing on the extended N{\'e}ron-Severi group $\Nscr(Y)$ is $q(r,l,s) = \delta l^2 -rs$,  where $d = 2\delta$ is the degree of $Y$.  
 For $z\in \CC$ we put $e(z): = \exp(zH)\in \Nscr(X)_\CC$.  If $z=x+iy$ then we have
\begin{equation}
\label{eq:e}
\begin{aligned}
e(x+iy)&= 1+(x+iy)H+\frac{1}{2}(x+iy)^2H^2\\
&= (1,xH,\frac{x^2-y^2}{2}H^2)+i(0,yH,xyH^2)\\
&= (1,x,\delta(x^2-y^2))+i(0,y,2\delta xy).
\end{aligned}
\end{equation}
Here we identify $\Pic(Y) \cong H\cdot \ZZ$ with $\ZZ$ as in \eqref{eq:Z3}.  Note that if $z\in \CC$ then it follows
from \eqref{eq:e} that
\begin{equation}
\label{eq:orthogonal}
\begin{gathered}
\langle \Re e(z), \Im e(z)\rangle =0,\\
q(\Re e(z))=q(\Im e(z))=\delta y^2.
\end{gathered}
\end{equation}

It follows from \eqref{eq:orthogonal} that $e(z)\in \Pscr(Y)$ for $z\in \HH$,  where $\HH$ denotes the upper half-plane and $\Pscr(Y)$ is defined in (\ref{eq:defP}). 
If we identify $\CC\cong \RR^2$ then it is clear that
\[
\Gr(2,\Nscr(Y)_\RR)_{>0}\cong\Pscr(Y)/\GL_2(\RR)\cong \Pscr^+(Y)/\GL^+_2(\RR).
\]

 Let $\Nscr(Y)_{\RR,-1}^+$ be the connected component of $\Nscr(Y)_{\RR,-1}$ 
containing the points such that $r>0$.
\begin{lemma}
\label{lem:transfer}
The composition
\begin{equation}
\label{eq:composition}
\HH\xrightarrow{e} \Pscr^+(Y)\r \Pscr^+(Y)/\GL^+_2(\RR)\cong \Gr(2,\Nscr(Y)_\RR)_{>0}\cong \Nscr(Y)_{\RR,-1}^+
\end{equation}
is an isomorphism given by
\begin{equation}
\label{eq:w}
\HH\r  \Nscr(Y)_{\RR,-1}^+:x+iy\mapsto \frac{1}{\sqrt{\delta} y}(1,x,\delta (x^2+y^2)).
\end{equation}
\end{lemma}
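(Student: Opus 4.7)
The strategy is direct computation: for $z = x+iy \in \HH$, I trace the image of $e(z)$ through each step of \eqref{eq:composition}, which ultimately amounts to identifying the unique vector of Mukai norm $-1$ in $V^{\perp_q}$ lying in the component where $r>0$, where $V \subset \Nscr(Y)_\RR$ is the positive $2$-plane spanned by $\Re e(z)$ and $\Im e(z)$.

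First, from \eqref{eq:e} I would read off $\Re e(z) = (1,x,\delta(x^2-y^2))$ and $\Im e(z) = (0,y,2\delta xy)$, noting that by \eqref{eq:orthogonal} these span a positive $2$-plane $V$, whose image is well defined in $\Pscr^+(Y)/\GL_2^+(\RR) \cong \Gr(2,\Nscr(Y)_\RR)_{>0}$. Under the identification with $\PP(\Nscr(Y)_\RR)_{<0}$ from \S\ref{sec:hyperboloid}, the plane $V$ is sent to its orthogonal complement $V^{\perp_q}$, which is a negative line.

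Next, I would determine $V^{\perp_q}$ explicitly by solving the two linear equations
\[
\langle (r,l,s), \Re e(z)\rangle = 0, \qquad \langle (r,l,s), \Im e(z)\rangle = 0
\]
using the formula for the Mukai pairing. The second equation reads $2\delta y(l - rx) = 0$, forcing $l = rx$ since $y\neq 0$; substituting into the first gives $s = \delta r(x^2+y^2)$. Taking $r=1$ produces the generator $w := (1,x,\delta(x^2+y^2))$ of $V^{\perp_q}$, and a direct computation yields $q(w) = \delta x^2 - \delta(x^2+y^2) = -\delta y^2$, confirming that $V^{\perp_q}$ is indeed negative.

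Finally, to lift from $\PP(\Nscr(Y)_\RR)_{<0}$ back to $\Nscr(Y)_{\RR,-1}^+$ via the inverse of the diffeomorphism \eqref{eq:iso}, I rescale $w$ so that its Mukai norm equals $-1$. Since $y>0$, dividing by $\sqrt{\delta}\,y$ achieves this and yields a vector whose first coordinate $\frac{1}{\sqrt{\delta}\,y}$ is positive, so it lies in the connected component $\Nscr(Y)_{\RR,-1}^+$ (defined by $r>0$). This reproduces formula \eqref{eq:w}. The only mild care needed is tracking the sign of $r$ when normalizing; there is no substantive obstacle beyond this bookkeeping.
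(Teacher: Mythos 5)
Your proof is correct and takes essentially the same route as the paper's: the paper likewise characterizes the image of $z$ as the unique $w$ orthogonal to $\Re e(z)$ and $\Im e(z)$ satisfying $q(w)=-1$ and $r>0$, and leaves the verification of \eqref{eq:w} to the reader. You have simply carried out that verification (solving the two orthogonality equations, computing $q(w)=-\delta y^2$, and normalizing with the correct sign of $r$) explicitly.
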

\begin{proof}
As explained in \S\ref{sec:hyperboloid} the composition  \eqref{eq:composition}
sends $z$ to $w$ where $w\in \RR^3$ is orthogonal to $\Re e(z), \Im e(z)$ and which satisfies $q(w)=-1$, $r>0$. Using~\eqref{eq:e} one checks
that one may take $w$ as in \eqref{eq:w}.
\end{proof}

\begin{remark} One can check that if we equip $\Nscr(Y)^+_{\RR,-1}$ with the Riemannian metric introduced in \S\ref{sec:hyperboloid} then the pullback of this metric under \eqref{eq:w}
is the standard metric on $\HH$ given by
\[
ds^2=\frac{dx^2+dy^2}{y^2}.
\]
\end{remark} 
\begin{corollary}
\label{cor:kawatani}
The composition
\[
\HH\xrightarrow{e} \Pscr^+(Y)\r \Pscr^+(Y)/\GL^+_2(\RR)\cong \Gr(2,\Nscr(Y)_\RR)_{>0}\cong \PP(\Nscr(Y)_\RR)_{<0}
\]
is given by
\begin{equation}
\label{eq:pinv}
\HH\r \PP(\Nscr(Y)_\RR)_{<0}:(x,y)\mapsto [1:x:\delta (x^2+y^2)].
\end{equation}
The inverse map is given by
\begin{equation}
\label{eq:inverse}
p: \PP(\Nscr(Y)_\RR)_{<0}\r \HH:[r:l:s]\mapsto\frac{l+i\sqrt{\delta^{-1}rs-l^2}}{r}.
\end{equation}
\end{corollary}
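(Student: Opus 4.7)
The plan is to deduce this directly from Lemma \ref{lem:transfer} by projectivizing the explicit formula \eqref{eq:w} and then inverting the resulting map. There is no real obstacle; the entire content is bookkeeping of coordinates and a careful check of the sign/normalisation conventions so that one lands in the correct connected component $\Nscr(Y)_{\RR,-1}^+$ (equivalently, in $\HH$ rather than the lower half plane).

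First, for the formula \eqref{eq:pinv}: under the identification $\Nscr(Y)_{\RR,-1}^+ \cong \PP(\Nscr(Y)_\RR)_{<0}$ from \eqref{eq:iso}, which is simply passage to the projective class, the prefactor $1/(\sqrt{\delta} y)$ in \eqref{eq:w} is absorbed. So composing \eqref{eq:w} with \eqref{eq:iso} sends $x+iy \in \HH$ to $[1:x:\delta(x^2+y^2)] \in \PP(\Nscr(Y)_\RR)_{<0}$, which is exactly \eqref{eq:pinv}. One checks that the image indeed lies in $\PP(\Nscr(Y)_\RR)_{<0}$ by computing $q(1,x,\delta(x^2+y^2)) = \delta x^2 - \delta(x^2+y^2) = -\delta y^2 < 0$.

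Next, for the inverse \eqref{eq:inverse}: given $[r:l:s]$ with $q(r,l,s) = \delta l^2 - rs < 0$, note that this negativity forces $rs > \delta l^2 \ge 0$, so in particular $r \ne 0$. We pick the representative with $r>0$ (this is the choice that corresponds to the positive component $\Nscr(Y)_{\RR,-1}^+$, and hence to $\HH$ rather than the lower half plane). Setting $[1:x:\delta(x^2+y^2)] = [r:l:s]$ yields $x = l/r$ and $x^2+y^2 = s/(\delta r)$, so
\[
y^2 \;=\; \frac{s}{\delta r} - \frac{l^2}{r^2} \;=\; \frac{\delta^{-1} r s - l^2}{r^2},
\]
and the right-hand side is positive by the inequality above. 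Taking the positive square root (which corresponds to $y>0$, i.e.\ to $\HH$) yields $y = \sqrt{\delta^{-1} r s - l^2}/r$, so
\[
p([r:l:s]) \;=\; x + i y \;=\; \frac{l + i\sqrt{\delta^{-1} r s - l^2}}{r},
\]
which is \eqref{eq:inverse}. This inverts \eqref{eq:pinv} by construction, completing the proof.
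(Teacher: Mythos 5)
Your proof is correct and follows essentially the same route as the paper: formula \eqref{eq:pinv} is obtained by projectivizing the explicit expression of Lemma \ref{lem:transfer} via the identification \eqref{eq:iso}, and the inverse is verified by the same elementary computation $x=l/r$, $\delta(x^2+y^2)=s/r$ (the paper just runs this check in the forward direction, substituting $p$ and recovering $[r:l:s]$). Your extra care about why $r\neq 0$, the sign normalisation $r>0$, and the positivity of $y^2$ is a welcome, slightly more explicit version of the same argument.
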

\begin{proof}
To verify the second formula we compute for 
\[
x=\frac{l}{r},\quad y=\frac{\sqrt{\delta^{-1}rs-l^2}}{r}
\]
\begin{align*}
\delta(x^2+y^2)&=\frac{\delta l^2}{r^2}+\frac{rs-\delta l^2}{r^2}=\frac{s}{r}
\end{align*}
so that
\[
[1:x:\delta(x^2+y^2)]=[r:l:s]\qedhere
\]
\end{proof}
\begin{corollary} \label{cor:root}
If $\rho:=[r:l:s]\in \PP(\Nscr(Y)_{\RR})_{<0}$ corresponds to a root, i.e.\ if $\delta l^2-rs=-1$,  then
\begin{equation}
\label{eq:rhoformula}
p(\rho)=\frac{l\delta+i\sqrt{\delta}}{r\delta}
\end{equation}
\end{corollary}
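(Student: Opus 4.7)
The statement is a direct algebraic consequence of the formula \eqref{eq:inverse} from Corollary \ref{cor:kawatani}, so the plan is simply to substitute the root condition into that formula and simplify. I would proceed in two short steps.

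First, I would use the hypothesis $\delta l^2 - rs = -1$ to rewrite the quantity appearing under the square root in \eqref{eq:inverse}. Indeed, $rs = \delta l^2 + 1$ gives
\[
\delta^{-1} rs - l^2 = l^2 + \delta^{-1} - l^2 = \delta^{-1},
\]
so the imaginary part of \eqref{eq:inverse} simplifies to $\sqrt{\delta^{-1}}/r \cdot i = i/(r\sqrt{\delta})$.

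Second, I would substitute this back into \eqref{eq:inverse} to obtain
\[
p(\rho) = \frac{l + i/\sqrt{\delta}}{r} = \frac{l\sqrt{\delta} + i}{r\sqrt{\delta}} = \frac{l\delta + i\sqrt{\delta}}{r\delta},
\]
where the last equality follows by multiplying numerator and denominator by $\sqrt{\delta}$. This is exactly \eqref{eq:rhoformula}.

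There is no real obstacle here; the content is entirely in the preceding Corollary \ref{cor:kawatani}, and this corollary merely records the specialisation of the inverse map $p$ at points of $\PP(\Nscr(Y)_\RR)_{<0}$ coming from roots, which is the form that will presumably be useful when comparing the punctures of $\HH^0$ with the hyperplanes $\rho^\perp$ cut out by spherical classes.
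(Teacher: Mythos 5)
Your proof is correct and is exactly the intended argument: the paper states this corollary without proof precisely because it is the immediate substitution of the root condition $\delta l^2 - rs = -1$ into the formula \eqref{eq:inverse} for $p$, which is what you carry out. The algebra checks out, so nothing further is needed.
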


\subsection{Transfer of the $\PSL_2(\RR)$ action}
\label{sec:transfer}
Let $q$ be as in \S\ref{sec:BK}.
In this section we discuss how to transfer the action of $\PSL_2(\RR)$ on $\HH$ to $\PP(\Nscr(Y)_{\RR})_{<0}$.
\begin{lemma} \label{lem:transfer2}
Let $\widetilde{\phi}:\HH\r \HH$ be the M\"obius transformation corresponding to a matrix 
\[ \phi := 
\begin{pmatrix}
a&b\\
c&d
\end{pmatrix}.
\]
Define the matrix 
\begin{equation}\label{eq:phiformula} 
\Phi := 
\begin{pmatrix}
d^2 &2cd & c^2/\delta\\
bd& bc+ad& ac/\delta\\
b^2\delta &2ab\delta & a^2
\end{pmatrix}
\end{equation}
Then we have a commutative diagram
\[
\begin{tikzcd}
\PP(\Nscr(Y)_{\RR})_{<0}\ar[d,"\widetilde{\Phi}"']\ar[r,"p"] & \HH\ar[d,"\widetilde{\phi}"]\\
\PP(\Nscr(Y)_{\RR})_{<0}\ar[r,"p"'] & \HH
\end{tikzcd}
\]
where $\widetilde{\Phi}$ is given by
\[
\small{\begin{pmatrix}
r\\l\\s
\end{pmatrix}
\mapsto
\Phi 
\begin{pmatrix}
r\\l\\s
\end{pmatrix}.}
\]
In addition,  $\det \Phi = (\det \phi)^3$.
\end{lemma}
\begin{proof}
We will prove that $\Phi\circ p^{-1}=p^{-1}\circ \phi$. If $z=x+iy$ then the formula \eqref{eq:pinv} can be written as
\[
p^{-1}(z)=[1:(z+\bar{z})/2: \delta z\bar{z}]
\]
We then compute
\begin{align*}
(\Phi\circ p^{-1})(z)&=
\begin{pmatrix}
d^2 &2cd & c^2/\delta\\
bd& bc+ad& ac/\delta\\
b^2\delta &2ab\delta & a^2
\end{pmatrix}
\begin{pmatrix}
1\\(\bar{z}+z)/2\\\delta z\bar{z}
\end{pmatrix}\\
&=
\begin{pmatrix}
(cz+d)(c\bar{z}+d)\\
((az+b)(c\bar{z}+d)+(a\bar{z}+b)(cz+d))/2\\
\delta(az+b)(a\bar{z}+b)
\end{pmatrix}\\
&=(p^{-1}\circ \phi)(z)\qedhere
\end{align*}

 One checks that $\det \Phi=(\det\phi)^3$ by direct computation.
\end{proof}

Recall that the subgroup of index two in  $O(q,\RR)$ that preserves $\Nscr(Y)_{\RR,-1}^+$ is denoted by $O^+(q,\RR)$. It is the group of isometries of the hyperboloid model. The subgroup $\SO^+(q,\RR)$ is the group of orientation preserving isometries of the hyperboloid model.  Recall also that $\Aut^+ \Nscr(Y)$ is defined as the group of isometries of $\Nscr(Y)$ which preserve the orientation of the positive 2-planes (see \S \ref{sec:K3prelim} for details). 

Note that in the natural map
\begin{equation}
\label{eq:isom}
\Aut^+(\Nscr(Y)_\RR)\r O^+(q,\RR)\cong \Isom(\PP(\Nscr(Y)_\RR)_{<0})
\end{equation}
the two $+$'s have different meanings. In fact \eqref{eq:isom} kills $-1$ (the image of $[1]$) and its image is $\SO^+(q,\RR)$. More precisely \eqref{eq:isom} induces an isomorphism
\begin{equation}
\label{eq:isom2}
\Aut^+(\Nscr(Y)_\RR)/\{\pm 1\}\r\SO^+(q,\RR)
\end{equation}

The subgroup of orientation preserving isometries of $\HH$ is $\PSL_2(\RR)$.
So from Lemma \ref{lem:transfer} and Lemma \ref{lem:transfer2} we obtain 
isomorphisms of Lie groups
\begin{equation}
\label{eq:isolie}
\PSL_2(\RR)\cong \SO^+(q,\RR):\overline{\phi}\mapsto \frac{\Phi}{\det\phi}
\end{equation}
We state a few results without proof.  They can be verified 
easily using Lemma \ref{lem:transfer}.
\begin{corollary}\label{cor:translation}
When $\alpha \in \ZZ$ the isometry on $\PP(\Nscr(Y)_{\RR})_{<0}$ induced by $ - \otimes \Oscr_Y(\alpha) \in \Aut \Dscr^b(Y)$ is given by the formula.
\begin{equation}
\label{eq:talpha}
t_\alpha:\PP(\Nscr(Y)_{\RR})_{<0}\r \PP(\Nscr(Y)_{\RR})_{<0}:[r:l:s]\mapsto [r:l+\alpha r: s+2\delta \alpha l+\delta\alpha^2 r]
\end{equation}
 $t_\alpha$ corresponds to
\[
t_\alpha:\HH\r \HH:z\mapsto z+\alpha.
\]
\end{corollary}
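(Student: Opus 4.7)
The plan is to verify the formula by a direct Mukai-vector computation and then match the result with the M\"obius transformation $z \mapsto z + \alpha$ using the geometric dictionary already established in Corollary \ref{cor:kawatani} and Lemma \ref{lem:transfer2}.

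For the first part, I would note that tensoring by a line bundle $L$ acts on the Mukai vector $v(E) = \ch(E)\sqrt{\operatorname{td}(Y)}$ by multiplication by $\ch(L)$, since $\sqrt{\operatorname{td}(Y)}$ lies in the centre of $\H^\ast(Y,\RR)$. For $L = \Oscr_Y(\alpha H)$ we have $\ch(L) = e^{\alpha H} = 1 + \alpha H + \alpha^2 \delta\,[\mathrm{pt}]$, using the relation $H^2 = 2\delta\,[\mathrm{pt}]$. Writing $v(E) = (r,l,s)$ as $r + lH + s\,[\mathrm{pt}]$, the expansion
\[
\bigl(1 + \alpha H + \alpha^2 \delta\,[\mathrm{pt}]\bigr)\bigl(r + lH + s\,[\mathrm{pt}]\bigr) = r + (l + \alpha r)H + (s + 2\delta\alpha l + \delta\alpha^2 r)\,[\mathrm{pt}]
\]
immediately yields the claimed formula for $t_\alpha$. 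The calculation is linear in $\alpha$, so the same formula defines a linear automorphism of $\Nscr(Y)_\RR$ for arbitrary $\alpha \in \RR$; since this automorphism preserves the Mukai quadratic form (being the cohomological action of an autoequivalence on the integral Mukai lattice when $\alpha \in \ZZ$, and by continuity in general), it descends to $\PP(\Nscr(Y)_\RR)_{<0}$.

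For the second part, I would verify the correspondence with the translation $z \mapsto z + \alpha$ by applying the map $p$ from \eqref{eq:inverse} directly to the transformed vector:
\[
p(t_\alpha[r:l:s]) = \frac{(l+\alpha r) + i\sqrt{\delta^{-1} r(s + 2\delta\alpha l + \delta\alpha^2 r) - (l+\alpha r)^2}}{r}.
\]
Expanding the expression under the square root, the cross terms $2\alpha l r$ and $\alpha^2 r^2$ cancel against those in $(l+\alpha r)^2$, leaving $\delta^{-1}rs - l^2$, so one reads off $\alpha + p([r:l:s])$, as required. Alternatively, substituting $(a,b,c,d) = (1,\alpha,0,1)$ into the matrix formula \eqref{eq:phiformula} of Lemma \ref{lem:transfer2} reproduces precisely the matrix implicit in \eqref{eq:talpha}, giving a second independent check.

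There is no conceptual obstacle here; both steps are routine. The only care required is to keep the conventions for the Mukai vector and Mukai pairing consistent with those introduced in \S\ref{sec:K3prelim}, and to note that while the autoequivalence $- \otimes \Oscr_Y(\alpha H)$ itself is only defined for $\alpha \in \ZZ$, the resulting linear transformation of $\Nscr(Y)_\RR$ extends formally to all $\alpha \in \RR$.
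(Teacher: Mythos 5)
Your proof is correct and is essentially the verification the paper has in mind: the paper states this corollary without proof, remarking only that it ``can be verified easily using Lemma \ref{lem:transfer}'', and your computation (the Chern-character action $v(E\otimes L)=\ch(L)\cdot v(E)$ giving the matrix of $t_\alpha$, followed by the check that $p$ from \eqref{eq:inverse} intertwines $t_\alpha$ with $z\mapsto z+\alpha$, cross-checked against \eqref{eq:phiformula} with $(a,b,c,d)=(1,\alpha,0,1)$) is exactly that routine verification, carried out accurately.
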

\begin{remark} Note that the formula above makes sense for $\alpha\in \RR$. \end{remark}

Recall that the Fricke group $F(\delta) \subset \PSL_2(\ZZ)$ is obtained by adjoining the \emph{Fricke involution} $f_\delta$ to the congruence subgroup $\Gamma_0(\delta)$ (see \S \ref{sec:fuchsian}).

\begin{corollary}\label{cor:twi} Let $S \in \Dscr^b(Y)$ be a spherical object with $v(S) = (r,l,s) \in \Delta^+(Y)$ a positive root (cf. \eqref{eq:mukai}).  Recall that the action of $T_S$ on $\PP(\Nscr(Y)_{\RR})_{<0}$ is given by 
\[
s_{v(S)}: v \mapsto v + v(S) \langle v(S),  v \rangle.
\]
 Then $s_{v(S)}$ corresponds to the point reflection (Definition \ref{def:pointrefl}) at $p([r:l:s]) = \frac{l}{r} + \frac{i}{r\sqrt{\delta}} \in \HH$.  Explicitly it is given by the M{\"o}bius transformation
 \[
 \begin{pmatrix} \delta l & -s \\ \delta r & - \delta l\end{pmatrix} \in F(\delta).
\]
\end{corollary}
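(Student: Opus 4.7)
The plan is to verify the two claims separately. First, I would compute $p([r:l:s])$: by formula \eqref{eq:inverse} this equals $(l + i\sqrt{\delta^{-1}rs - l^2})/r$, and the root condition $\delta l^2 - rs = -1$ gives $\delta^{-1}rs - l^2 = \delta^{-1}$, producing the asserted value $l/r + i/(r\sqrt{\delta})$.

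For the main claim, the key conceptual step is that on $\PP(\Nscr(Y)_{\RR})_{<0}$ the action of $s_{v(S)}$ coincides with that of $-s_{v(S)}$, and the latter is orientation-preserving. Indeed $s_{v(S)}$ is a hyperplane reflection on $\Nscr(Y)_{\RR}$ with determinant $-1$ sending $v(S)$ to $-v(S)$, hence swapping the two components of $\Nscr(Y)_{\RR,-1}$; but $-s_{v(S)}$ lies in $\SO^+(q,\RR)$, fixes $v(S)$, and acts as $-\id$ on $v(S)^\perp$. Via the isometry of Corollary \ref{cor:kawatani}, the induced action on $\HH$ is then an orientation-preserving involution fixing $p([v(S)])$, which can only be the hyperbolic point reflection (rotation by $\pi$) at that point. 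To obtain the explicit M\"obius representative I would conjugate the matrix $\begin{pmatrix} 0 & -1 \\ 1 & 0\end{pmatrix}$ (rotation by $\pi$ at $i$) by the affine map $w\mapsto bw + a$ sending $i$ to $a+bi$, yielding $\begin{pmatrix} a & -(a^2+b^2) \\ 1 & -a\end{pmatrix}$. Substituting $a = l/r$, $b = 1/(r\sqrt{\delta})$ and using $a^2 + b^2 = (\delta l^2+1)/(r^2\delta) = s/(r\delta)$ (by the root condition), then multiplying by $r\delta$, gives precisely $M := \begin{pmatrix} \delta l & -s \\ \delta r & -\delta l\end{pmatrix}$. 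As a cross-check one may plug this into \eqref{eq:phiformula} of Lemma \ref{lem:transfer2} and verify $\Phi/\det\phi = -s_{v(S)}$ on $\Nscr(Y)_{\RR}$, which descends to $s_{v(S)}$ on the projective quotient.

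Finally, to see $M \in F(\delta)$: $\det M = \delta(rs - \delta l^2) = \delta$, and a short direct multiplication shows $M\cdot s_\delta^{-1} = \begin{pmatrix} s & l \\ \delta l & r\end{pmatrix}$ (up to scalar), which has determinant $1$ and lower-left entry in $\delta\ZZ$, hence lies in $\Gamma_0(\delta)$; thus $M \in \Gamma_0(\delta)\cdot s_\delta \subset F(\delta)$. The main (quite mild) point to watch throughout is the sign discrepancy: because $s_{v(S)}$ itself is not in $O^+(q,\RR)$, one must remember that the M\"obius transformation really represents $-s_{v(S)}$, which agrees with $s_{v(S)}$ only after passage to $\PP(\Nscr(Y)_{\RR})_{<0}$.
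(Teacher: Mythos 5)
Your proof is correct, and it follows the route the paper intends (the paper states this corollary without proof, remarking only that it "can be verified easily" from the transfer formula \eqref{eq:phiformula}): your cross-check that $\Phi/\det\phi=-s_{v(S)}$ is exactly that verification, and your sign discussion correctly handles the fact that $s_{v(S)}$ itself swaps the two sheets of $\Nscr(Y)_{\RR,-1}$ so that only $-s_{v(S)}$ lies in $\SO^+(q,\RR)$. The additional conceptual derivation (an orientation-preserving involution of $\HH$ fixing $p([r:l:s])$ must be the half-turn there, then conjugating $z\mapsto -1/z$ to that point) and the factorization $M=\bigl(\begin{smallmatrix} s & l \\ \delta l & r\end{smallmatrix}\bigr)s_\delta$ for the $F(\delta)$-membership are both accurate.
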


\section{Groups generated by spherical reflections in $\Aut^+\H^*(Y)$}\label{sec:reflections}
\subsection{Free products generated by elements of order two}
In this section we will obtain a precise description of the groups $G \subset \Aut^+\H^*(Y)$ generated by spherical reflections. (see \refeq{eq:GasKer}).  We also derive necessary and sufficient conditions for $G$ to be finitely generated,  depending on the degree of the K3 surface $Y$ in question (Corollary \ref{cor:finitegen}).  The results of this section will be used further to understand groups generated by spherical twists in $\Aut \Dscr^b(Y)$.  As a preparation,  we begin by some general facts regarding groups freely generated by elements of order $2$ and orbifold Riemann surfaces. 

\subsection{Free products generated by elements of order two}
\begin{lemma}
\label{lem:ordertwo}
Let $G$ be a group and assume that $R\subset G-\{1\}$ is a set of order two elements such that
\[
G=\ast_{r\in R}\langle r\rangle, 
\]
where $\ast$ denotes the free product. 
Then any element of order two in $G-\{1\}$ is conjugate to some element in $R$.
\end{lemma}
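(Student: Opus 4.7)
The plan is to use the normal form theorem for free products, together with the fact that any element of order two equals its own inverse.

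First I would recall that by the normal form theorem for free products, every element $g\in G-\{1\}$ has a unique expression
\[
g=r_{i_1}r_{i_2}\cdots r_{i_n}
\]
with $r_{i_j}\in R$ and $r_{i_j}\neq r_{i_{j+1}}$ for all $j$. We call $n$ the \emph{length} of $g$, denoted $\ell(g)$. Note that since each $r\in R$ has order two, we have $r_{i_j}^{-1}=r_{i_j}$, so
\[
g^{-1}=r_{i_n}r_{i_{n-1}}\cdots r_{i_1}
\]
is already in normal form (consecutive letters stay distinct when we reverse the word).

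Now suppose $g\in G$ has order two, so $g=g^{-1}$. By uniqueness of the normal form we obtain the palindrome condition $r_{i_j}=r_{i_{n+1-j}}$ for all $1\le j\le n$. I would then split into cases on the parity of $n$. If $n=2k$ is even, the palindrome condition gives $r_{i_k}=r_{i_{k+1}}$, contradicting the requirement that consecutive letters in a reduced word be distinct. Hence this case cannot occur.

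If $n=2k+1$ is odd, set $w:=r_{i_1}r_{i_2}\cdots r_{i_k}$. Using $r_{i_j}^2=1$ and the palindrome condition, we compute
\[
w^{-1}gw = r_{i_k}\cdots r_{i_1}\cdot r_{i_1}\cdots r_{i_k}\, r_{i_{k+1}}\, r_{i_k}\cdots r_{i_1}\cdot r_{i_1}\cdots r_{i_k} = r_{i_{k+1}}\in R,
\]
which exhibits $g$ as a conjugate of an element of $R$. (Strictly speaking the case $n=1$ is handled by taking $w=1$, so no induction is needed.)

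The only nontrivial input is the normal form theorem for free products, and the key trick is observing that the self-inverse condition forces the normal form to be a palindrome of odd length, whose middle letter is then the desired conjugate of $g$. There is no real obstacle; the argument is entirely combinatorial.
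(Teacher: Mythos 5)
Your proof is correct and follows essentially the same route as the paper: express the order-two element in normal form, observe that self-inverseness forces a palindrome, rule out even length, and read off the middle letter as the conjugating representative in the odd case. The paper's version is just a terser statement of the same argument.
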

\begin{proof} Let $x\in G-\{1\}$ be such that $x^2=1$. Expressing $x$ as a reduced word
in $R$ we see that $x$ must be palindromic. If $x$ has odd length then it is of
the form $wrw^{-1}$. Even length is impossible since then $x=ww^{-1}=1$.
\end{proof}
\begin{lemma}
\label{lem:ordertwo2}
Let $G$ be a group and assume that $R\subset G-\{1\}$ is a set of order two elements such that
\[
G=\ast_{r\in R}\langle r\rangle
\]
Let $S\subset G-\{1\}$ be a collection of elements such that
\begin{enumerate}[{label=(\alph*), ref=\emph{\alph*}}]
\item \label{it:ordertwo2:it1} $G$ is generated by $S$;
\item \label{it:ordertwo2:it2} For each $s\in S$ we have $s^2=1$;
\item \label{it:ordertwo2:it3} $S$ is closed under conjugation.
\end{enumerate}
Then
\begin{enumerate}
\item \label{it:ordertwo2:it4} $R$ is a set of orbit representatives for the action of $G$ on $S$ by conjugation (in particular $R\subset S$).
\item  \label{it:ordertwo2:it5} $S$ is the set of order two elements in $G-\{1\}$.
\end{enumerate}
\end{lemma}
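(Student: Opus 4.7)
My plan is to reduce both statements to an analysis of how $S$ sits inside the conjugacy classes of $G$. First I would observe that, since $G = \ast_{r \in R} \langle r \rangle$, the abelianization of $G$ is $\bigoplus_{r \in R} \ZZ/2\ZZ$ and each $r \in R$ maps to a distinct generator; in particular, distinct elements of $R$ lie in distinct $G$-conjugacy classes. Combined with Lemma \ref{lem:ordertwo}, this shows that the set of order two elements in $G - \{1\}$ decomposes as a disjoint union $\bigsqcup_{r \in R} [r]$, where $[r]$ denotes the $G$-conjugacy class of $r$.

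Next, set $R' := R \cap S$. Hypothesis \eqref{it:ordertwo2:it2} gives $S \subset \bigsqcup_{r \in R} [r]$, and hypothesis \eqref{it:ordertwo2:it3} says $S$ is a union of full conjugacy classes. I claim $S = \bigsqcup_{r \in R'} [r]$: if $s \in S$ lies in $[r]$, write $s = g r g^{-1}$; applying \eqref{it:ordertwo2:it3} to $s$ yields $r = g^{-1} s g \in S$, hence $r \in R'$, and this gives $\subset$; the reverse inclusion follows at once from \eqref{it:ordertwo2:it3}.

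It remains to show $R' = R$, after which \eqref{it:ordertwo2:it4} and \eqref{it:ordertwo2:it5} follow immediately (distinctness of the classes $[r]$ makes $R$ an honest set of orbit representatives, and $S = \bigsqcup_{r \in R}[r]$ is then the whole set of order two elements). Suppose for contradiction that there exists $r_0 \in R \setminus R'$. The universal property of the free product yields a homomorphism $\pi \colon G \to \ast_{r \in R \setminus R'} \langle r \rangle$ sending each $r \in R'$ to $1$ and each $r \in R \setminus R'$ to itself; this target is non-trivial since it contains $r_0$. Because $S \subset \bigsqcup_{r \in R'}[r]$, every element of $S$ lies in $\ker \pi$, so $\langle S \rangle \subseteq \ker \pi \subsetneq G$, contradicting \eqref{it:ordertwo2:it1}. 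The only point that needs genuine care is the very first step, namely the pairwise non-conjugacy of the elements of $R$; this is immediate from the abelianization, but can equally well be read off the normal form theorem for free products.
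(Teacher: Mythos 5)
Your proof is correct and follows essentially the same route as the paper's: both rest on Lemma \ref{lem:ordertwo} to place every order-two element in the conjugacy class of some $r\in R$, use the free-product structure to separate these classes, and derive $R\subset S$ from the generation hypothesis via a quotient argument. The only (cosmetic) difference is that for the step $R\cap S=R$ you pass to the free product $\ast_{r\in R\setminus R'}\langle r\rangle$, whereas the paper passes to the abelianization $G/[G,G]\cong\bigoplus_{r\in R}\ZZ/2\ZZ$; both quotients kill $S$ while detecting a hypothetical $r_0\in R\setminus S$.
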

\begin{proof} Let $s\in S$. By Lemma \ref{lem:ordertwo} $s$ is conjugate to some $r\in R$
and by \eqref{it:ordertwo2:it3} $r\in S$. So every element in $S$ is conjugate to an element in $R\cap S$.
Assume first that $R\cap S\neq R$ and choose $r\in R\setminus (R\cap S)$.
Since by \eqref{it:ordertwo2:it1} $S$ generates $G$ it follows that $r$ is a product of conjugates of elements in $R\cap S$.  Let $\overline{r}$ be the image of$r$ in $G/[G,G] \cong \bigoplus_{r \in R} \ZZ/ 2 \ZZ r$. Then $\bar{r}$
is a sum of $\bar{r}'$ for $r'\in S\cap R$.  However, this is impossible since $\bar{r}'$ for $r'\in R$ is
a $\ZZ/2\ZZ$-basis for $G/[G,G]$. We conclude that $S\subset R$ and furthermore every conjugation orbit in $S$ intersects $R$. If the
intersection has cardinality $\ge 2$ then we find two elements in $R$ that are conjugate. Passing again to $G/[G,G]$ shows
that this is impossible. So $R$ is a set of orbit representatives. This finishes the proof of \eqref{it:ordertwo2:it4}.

If $g\in G-\{1\}$ is any element of order two then by Lemma \ref{lem:ordertwo} it is conjugate to an element in $R$.
Hence since $R\subset S$ and $S$ is closed under conjugation,  it is in $S$. This finishes the proof of \eqref{it:ordertwo2:it5}.
\end{proof}
\begin{lemma} 
\label{lem:concrete} Let the notations and assumptions be as in Lemma \ref{lem:ordertwo2}.
We have a presentation
\begin{equation}
\label{eq:concrete}
1\r F\r \Gscr\r G\r 0
\end{equation}
where $\Gscr$ is defined as the free group generated by all symbols $\tilde{r}$ for $r\in R$ 
and $F$ is the free group generated by all elements of the form $w\tilde{r_0}^2 w^{-1}$,  where $r_0 \in R$ and $w$ run over all square free words in the alphabet $\{ \tilde{r} \ \mid \ r \in R \}$ not divisible by $\tilde{r_0}$ on the right.
In particular,  sending $w\tilde{r}^2 w^{-1}\mapsto \bar{w}r \bar{w}^{-1}\in S$ provides a natural indexation of the given free generators
of $F$ by the elements of $S$.
\end{lemma}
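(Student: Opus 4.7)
The plan is to realize $F$ as the kernel of the natural surjection $\phi: \Gscr \twoheadrightarrow G$, $\tilde{r}\mapsto r$, and to identify an explicit free generating set for it via the Nielsen--Schreier theorem, using a transversal built from reduced normal forms in the free product $G=\ast_{r\in R}\langle r\mid r^2\rangle$. The map $\phi$ is surjective because $R$ generates $G$, and its kernel is the normal closure of $\{\tilde{r}^2:r\in R\}$ in $\Gscr$ by the free-product presentation; since $\Gscr$ is free, $F=\ker\phi$ is automatically free by Nielsen--Schreier, so the remaining task is to pin down a preferred set of free generators.

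The reduced normal forms for elements of $G$ are the alternating words $r_{i_1}\cdots r_{i_k}$ with $r_{i_j}\neq r_{i_{j+1}}$, and their unique lifts $\tilde{r}_{i_1}\cdots\tilde{r}_{i_k}\in\Gscr$ are precisely the square-free words in the $\tilde{r}$'s; these constitute a Schreier transversal $T$ for $F$ (one representative per coset, closed under prefixes). The Schreier formula expresses the free generators of $F$ as the non-trivial elements $\tilde{w}\tilde{r}\cdot\overline{\tilde{w}\tilde{r}}^{-1}$ with $\tilde{w}\in T$ and $\tilde{r}$ a free generator of $\Gscr$. If $\tilde{w}$ does not end in $\tilde{r}$, then $\tilde{w}\tilde{r}\in T$ and the contribution is trivial; if $\tilde{w}=\tilde{w}'\tilde{r}$, then $\phi(\tilde{w}\tilde{r})=\phi(\tilde{w}')$, forcing $\overline{\tilde{w}\tilde{r}}=\tilde{w}'$ and giving the generator $\tilde{w}'\tilde{r}^2(\tilde{w}')^{-1}$. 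This yields exactly the generating set in the statement, once one notes the elementary identity $w\tilde{r}^2w^{-1}=(w\tilde{r}^{-1})\tilde{r}^2(w\tilde{r}^{-1})^{-1}$, which reconciles the two square-free representations of each generator.

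For the natural indexation by $S$, send $w\tilde{r}^2w^{-1}\mapsto \bar{w}r\bar{w}^{-1}$. This lands in $S$ by closure under conjugation, is well-defined modulo the identity above, and is surjective since Lemma \ref{lem:ordertwo2} identifies $S$ with the $G$-conjugates of the elements of $R$ while every element of $G$ has a unique reduced-form lift to a square-free word. For injectivity, suppose $\bar{w}_1 r_1\bar{w}_1^{-1}=\bar{w}_2 r_2\bar{w}_2^{-1}$ and set $u:=\bar{w}_2^{-1}\bar{w}_1\in G$, so that $ur_1u^{-1}=r_2$. In a free product of order-two cyclic groups the distinct elements of $R$ are pairwise non-conjugate and each centralizer satisfies $C_G(r)=\langle r\rangle=\{1,r\}$, so $r_1=r_2=:r$ and $u\in\{1,r\}$; the two cases correspond exactly to the two square-free representatives identified by the elementary identity, so the induced map on equivalence classes is bijective.

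The main technical point throughout is the mild bookkeeping around the redundancy $w\tilde{r}^2w^{-1}=(w\tilde{r}^{-1})\tilde{r}^2(w\tilde{r}^{-1})^{-1}$ in the square-free representation, which is ultimately controlled by the structure of centralizers in a free product of order-two cyclic groups; beyond this the argument is formal.
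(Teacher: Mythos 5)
Your proof is correct and follows essentially the same route as the paper: the paper invokes the topological proof of Nielsen--Schreier (the coset graph of $\Gscr/F$ is a tree with doubled edges, and contracting the natural spanning tree yields the generators), while you carry out the equivalent combinatorial Schreier-transversal computation, with the square-free positive words playing the role of the spanning tree. You additionally verify the details the paper leaves to the reader, in particular the bijectivity of the indexation by $S$ via the non-conjugacy of distinct elements of $R$ and the centralizer computation $C_G(r)=\{1,r\}$ in a free product of order-two cyclic groups.
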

\begin{proof} The fact that the given elements are free generators of $F$ follows naturally from the topological proof of the Nielsen-Schreier theorem
(see \cite[\S2.2.3]{MR1211642}).  Consider the directed coset graph $\Gamma_{\Gscr/F}$ for $\Gscr/F \cong G$, i.e.  its vertices are right cosets $Fg$ for $g \in \Gscr$ and edges are of the form $Fg \to Fg \tilde{r}$ for $r \in R$.  Then $\Gamma_{\Gscr/F}$ is a tree with doubled edges (for each edge $Fg \to Fg \tilde{r}$ there is an edge back $Fg\tilde{r}  \to Fg \tilde{r}^2 = Fg$).  There is a natural spanning tree obtained by picking all outgoing arrows starting from the vertex corresponding to the trivial coset $F$. This spanning tree provides the generators given in the statement.  We leave the details to the reader.
\end{proof}
\begin{remark} The given generators of $F$ are not canonical in any way.
For example there is an uncountable number of spanning trees one may contract, each yielding a different set of generators. 
\end{remark}
\subsection{Preliminaries on orbifold Riemann surfaces}
\label{subsec:prelimrs}
Let $V$ be a not necessarily compact orbifold Riemann surface with a discrete (possibly infinite) set of orbifold points $(x_i)_i$ with orders $(e_i)_i$. 
Let $V^\circ=V-\{(x_i)_i\}$. So $V^\circ$ is an ordinary Riemann surface with punctures given by $(x_i)_i$.
Choose a non-orbifold base point $x\in V$. For any path\footnote{Whenever we talk of a path in $V^\circ$ between two points in $(x_i)_i$ we mean a path in $V$ which does not intersect $(x_i)_i$ except for the endpoints. } $\gamma:x\r x'$ in $V^\circ$ we denote by $\tau_\gamma$ the small loop around $x'$, starting in $x$, 
in the direction of the orientation, obtained by thickening $\gamma$.
\begin{lemma} \label{lem:orbi}
Choose  non-intersecting paths $\gamma_i$ from $x$ to $x_i$ in $V^\circ$ and let $({{\tau}}_i)_i=(\tau_{\gamma_i})_i$ be the corresponding loops around the punctures in $V^\circ$.
Then we have a presentation
\begin{equation}
\label{eq:orbi}
\pi_1(V,x)=\pi_1(V^\circ, x)/(({{\tau}}_i^{e_i})_i).
\end{equation}
\end{lemma}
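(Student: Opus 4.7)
The plan is to apply the Seifert--van Kampen theorem in its orbifold formulation to a suitable open cover of $V$. First I would choose pairwise disjoint small open orbifold disks $D_i \subset V$ around the orbifold points $x_i$, shrinking them so that each path $\gamma_i$ meets $D_i$ only in a short terminal arc and meets no other $D_j$. Then $V = V^\circ \cup \bigsqcup_i D_i$ and the intersections $V^\circ \cap D_i$ are topological punctured disks.

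Next I would compute the three pieces entering van Kampen. The orbifold disk $D_i$ with a single orbifold point of order $e_i$ admits the standard $e_i$-fold branched covering map $z \mapsto z^{e_i}$ from an ordinary disk as its orbifold universal cover, so $\pi_1^{\mathrm{orb}}(D_i) \cong \ZZ/e_i\ZZ$, generated by a small loop around $x_i$. The punctured disk $V^\circ \cap D_i$ has $\pi_1 \cong \ZZ$, generated by the same small loop. The path $\gamma_i$ (together with its thickening) provides the change-of-basepoint isomorphism that identifies this generator, viewed inside $\pi_1(V^\circ, x)$, with $\tau_i$.

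Applying the orbifold van Kampen theorem to the cover $\{V^\circ\} \cup \{D_i\}_i$ (with basepoints linked to $x$ via the $\gamma_i$) then presents $\pi_1^{\mathrm{orb}}(V, x)$ as the iterated pushout
\[
\pi_1(V^\circ, x) \ast_{\prod_i \ZZ} \prod_i \ZZ/e_i\ZZ,
\]
where the left map sends the $i$-th free generator to $\tau_i$ and the right map is reduction modulo $e_i$. Computing this pushout amounts to adjoining to $\pi_1(V^\circ, x)$ the single relation $\tau_i^{e_i} = 1$ for each $i$, which gives exactly \eqref{eq:orbi}.

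The main obstacle is really just invoking the correct version of van Kampen for orbifolds, together with the identification $\pi_1^{\mathrm{orb}}(D_i) \cong \ZZ/e_i\ZZ$; both are standard but one should be careful that the orbifold disks are chosen to be ``good'' (i.e., global quotients of an ordinary disk by a finite cyclic rotation group) so that the orbifold van Kampen statement applies verbatim. Alternatively, one can bypass the general orbifold machinery by constructing the universal orbifold cover $\widetilde V \to V$ directly: it is an ordinary Riemann surface on which the group on the right of \eqref{eq:orbi} acts freely with quotient $V$, and standard covering-space theory then identifies this group with $\pi_1^{\mathrm{orb}}(V, x)$.
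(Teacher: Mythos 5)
Your proof is correct and is essentially the argument the paper has in mind: the paper's ``proof'' is just a pointer to Scott's discussion of orbifold fundamental groups (\cite[p423ff.]{MR705527}), which carries out precisely this van Kampen computation with cyclic orbifold disks glued along punctured annuli. The only point worth making explicit is that the set of orbifold points may be infinite (it is only assumed discrete), so one should phrase the pushout as an iterated one or a direct limit over finite subsets, which your ``iterated pushout'' formulation already accommodates.
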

\begin{proof} See the discussion on the construction of $\pi_1(V,x)$ in \cite[p423ff.]{MR705527}.
\end{proof}
\begin{corollary}
\label{cor:riemann}
Let $V$, $(x_i)_i$, $(e_i)_i$, ${{\tau}}_i$, $x$ be as above. Let $V$ be an orbifold Riemann surface.
Let $W$ be the coarse moduli space of $V$ (i.e.\ $W$ is
obtained from $V$ by replacing the orbifold points by ordinary
points).  If $W$ is contractible then there is a presentation
\[
\pi_1(V,x)=\ast_i (\langle {{\tau}}_i\rangle/({{\tau}}_i^{e_i}))
\]
\end{corollary}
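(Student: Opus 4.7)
The plan is to combine Lemma \ref{lem:orbi} with the classical computation of the fundamental group of a contractible surface with a discrete set of points removed.

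By Lemma \ref{lem:orbi} one has
\[
\pi_1(V,x)=\pi_1(V^\circ,x)/(({{\tau}}_i^{e_i})_i),
\]
so it suffices to show that $\pi_1(V^\circ,x)$ is the free group on the loops $({{\tau}}_i)_i$. Given this, quotienting by the normal subgroup generated by the ${{\tau}}_i^{e_i}$ yields by definition the free product $\ast_i\langle {{\tau}}_i\rangle/({{\tau}}_i^{e_i})$, which is the desired presentation.

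To identify $\pi_1(V^\circ,x)$, I would use that, as topological spaces, $V^\circ$ equals $W$ with the discrete subset $(x_i)_i$ removed. Since $W$ is a contractible connected $2$-manifold, it is homeomorphic to $\RR^2$, so $V^\circ$ is homeomorphic to $\RR^2\setminus\{x_i\}_i$. The fundamental group of the latter is well known to be free on the loops around the punctures. Concretely, choose pairwise disjoint closed disks $D_i\subset W$ with $D_i\cap\{x_j\}_j=\{x_i\}$ and $\gamma_i\subset D_i\cup\{x\}$ (this is possible because $(x_i)_i$ is discrete in the surface $W$ and one can thicken the non-intersecting paths $\gamma_i$ used in Lemma \ref{lem:orbi}). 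Then $V^\circ$ deformation retracts onto the wedge of circles obtained as the union of the $\partial D_i$ joined to $x$ along the $\gamma_i$, and the $i$-th circle in the wedge is, by construction, freely homotopic to ${{\tau}}_i$.

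The main obstacle is essentially notational rather than mathematical: one has to justify that the paths $\gamma_i$ can simultaneously be thickened to pairwise disjoint disks, but this uses only that $W$ is a surface and $(x_i)_i$ is discrete. Once this deformation retraction is in hand, the identification of the free generators with the specified loops ${{\tau}}_i$ is immediate, and combining with the presentation from Lemma \ref{lem:orbi} yields the corollary. Alternatively, one can argue directly by van Kampen applied to the open cover of $V^\circ$ by a contractible neighborhood of $\{x\}\cup\bigcup_i\gamma_i$ and the punctured disks $D_i\setminus\{x_i\}$, whose pairwise intersections are annuli, and this produces the same free product.
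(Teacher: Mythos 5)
Your proposal is correct and follows essentially the same route as the paper: reduce via Lemma \ref{lem:orbi} to computing $\pi_1(V^\circ,x)$, identify the contractible surface $W$ topologically with the plane (the paper invokes uniformization to get $W=\HH$ or $\CC$, you use the classification of contractible surfaces), and observe that the complement of a discrete set in the plane has free fundamental group on the loops ${\tau}_i$. You merely supply the deformation-retraction/van Kampen details that the paper leaves as ``easy to see.''
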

\begin{proof} Since $W$ is contractible, by the uniformization theorem we have $W=\HH$ or $W=\CC$. 
We have $V^\circ=W-\{(x_i)_i\}$ and it easy to see that $\pi_1(V^\circ,x)=\ast_i \langle {{\tau}}_i\rangle$. We now use  \eqref{eq:orbi}.
\end{proof}
\begin{remark} \label{rem:funddomain}
Assume $V=[\HH/G]$ is an orbifold quotient for a Fuchsian group $G$. Let $(y_i)_i\in \HH$ be orbit representatives for the fixed points of elements 
of $G$. Assume the $(y_i)_i$ are contained in a connected fundamental domain $\Fscr$ and choose $y$ in the interior of $\Fscr$ (the $(y_i)_i$ must be on the boundary). Let
$x$, $(x_i)_i$ be the images of $y$, $(y_i)_i$ in $V$.
Then a system of paths $(\gamma_i)_i$ from $x$ to $x_i$ as in Lemma \ref{lem:orbi} can be constructed as the image of a corresponding system of paths $\tilde{\gamma}_i$
from $y$ to $y_i$ in $\Fscr$. The $y_i$ are then the fixed points of the ${{\tau}}_i$ under the identification $G\cong \pi_1(V,x)$ corresponding to the choice of $y$.
\end{remark}
When $V$ is of finite type, i.e. $\{(x_i)_i\}=\{x_1,\ldots,x_m\}$ and $V=\overline{V}-\{x_{m+1},\ldots,x_{m+n}\}$ with $\overline{V}$ compact of genus $g$, then \eqref{eq:orbi} can be made
more precise in the sense that one has the well-known presentation for $\pi_1(V,x)$ given by
\[
a_1b_1a_1^{-1}b_1^{-1}\cdots a_gb_ga_g^{-1}b_g^{-1}{{\tau}}_1\cdots {{\tau}}_m {{\tau}}_{m+1}\cdots {{\tau}}_{m+n}=1,\quad {{\tau}}_i^{e_i}=1, i=1,\ldots,m
\]
where  we have augmented $\{{{\tau}}_1,\ldots{{\tau}}_n\}$ with additional loops $\{\tau_{n+1},\ldots,\tau_{m+n}\}$, going around the punctures $x_{m+1},\cdots x_{m+n}$ (see \cite[p423ff.]{MR705527}). 
The loops ${{\tau}}_1,\ldots,{{\tau}}_{m+n}$ can be constructed as in Remark \ref{rem:funddomain} by including points in the closure of the fundamental domain $\Fscr$ in $\overline{\HH} = \HH \cup \RR \cup \infty$

It follows that if $n\ge 1$ then $\pi_1(V,x)$ is a free product of cyclic groups 
\begin{multline}
\label{eq:freeproduct2}
\langle a_1 \rangle\ast \langle b_1\rangle \ast \cdots\ast \langle a_g\rangle \ast \langle b_g\rangle\ast \langle{{\tau}}_{m+1}\rangle \ast \cdots\ast \langle {{\tau}}_{m+n-1}\rangle 
\ast \langle{{\tau}}_1\rangle/({{\tau}}_1^{e_1}) \ast \cdots \ast \langle {{\tau}}_m\rangle/({{\tau}}_m^{e_m}) \\
\cong
F_{2g+n-1}\ast \ZZ/e_1\ZZ\ast\cdots \ast \ZZ/e_m\ZZ.
\end{multline}
\subsection{Groups generated by point reflections in the hyperbolic plane}\label{sec:pointrefl}
Recall from Definition \ref{def:AL} that $\Gamma_0^+(\delta) \subset \PSL_2(\RR)$, $\delta \in \NN$ is the group consisting of all Atkin-Lehner elements. 
\begin{lemma}
\label{lem:fuchs} Let $(s_i)_i$ be a set of point reflections in  $\Gamma^+_0(\delta)$.  Let $G$ be a subgroup of
 $\Gamma^+_0(\delta)$ 
which is generated by 
$(s_i)_i$. 
Let $S$ be the union of the conjugacy
  classes of the $(s_i)_i$ in G. 
Then 
\begin{enumerate}
\item \label{it:fuchs:it1} The set of fixed points of elements of  $G$ is the union of the orbits of the fixed points of the reflections $(s_i)_i$. Moreover the stabilisers of 
fixed points are isomorphic to $\ZZ/2\ZZ$.
\item \label{it:fuchs:it2} The Riemann surface $\HH/G$ is contractible.
\item \label{it:fuchs:it3} Every element of order two in $G$ is conjugate to some $s_i$.
\item \label{it:fuchs:it4} 
There exists a set of orbit representatives
  $R\subset S$ for the action of $G$ on $S$ by conjugation such that $G$ is the free product 
  \[
  G = \ast_{r \in R} \langle r \rangle \cong  \ast_{r \in R} \ZZ/2\ZZ. 
  \]
\end{enumerate}
\end{lemma}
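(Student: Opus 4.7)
The plan is to prove part \eqref{it:fuchs:it4} first and deduce parts \eqref{it:fuchs:it1}--\eqref{it:fuchs:it3} from it using Lemmas \ref{lem:ordertwo}, \ref{lem:ordertwo2} and the orbifold presentation \eqref{eq:freeproduct2}. As a subgroup of the discrete group $\Gamma^+_0(\delta)\subset \PSL_2(\RR)$, $G$ is itself Fuchsian; by Theorem \ref{thm:fuchsian} it acts properly discontinuously on $\HH$ with finite cyclic point stabilisers, so the orbifold quotient $V:=[\HH/G]$ is a well-defined orbifold Riemann surface with orbifold fundamental group $G$.

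For \eqref{it:fuchs:it4} I would construct a convex fundamental domain $\Fscr\subset\HH$ for $G$ in which every non-cuspidal boundary edge is paired with itself by a half-turn $r$ that fixes its midpoint. Here $r$ will be a conjugate of some generator $s_i$, so $r\in S$. By the Poincaré polyhedron theorem, such a domain yields a presentation of $G$ with these half-turns as generators and with the only relations being the vertex-cycle relations $r^2=1$ at the interior fixed points; vertices on $\partial\HH$ contribute no relations of finite order. Letting $R\subset S$ be the resulting set of generating half-turns, we obtain $G\cong\ast_{r\in R}\langle r\rangle$, and Lemma \ref{lem:ordertwo2}\eqref{it:ordertwo2:it4} identifies $R$ as a set of representatives for the $G$-orbits on $S$ under conjugation.

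Parts \eqref{it:fuchs:it1}, \eqref{it:fuchs:it2}, \eqref{it:fuchs:it3} follow formally. In a free product of copies of $\ZZ/2\ZZ$, every finite subgroup is conjugate to one of the factors, so the non-trivial point stabilisers of $G$ in $\HH$ are cyclic of order two and the corresponding fixed points are precisely the $G$-orbits of the $p_i$, giving \eqref{it:fuchs:it1}. Lemma \ref{lem:ordertwo} yields \eqref{it:fuchs:it3}: every order-two element of $G$ is conjugate to some $r\in R\subset S$. For \eqref{it:fuchs:it2}, comparing the structure $G\cong\ast_{r\in R}\ZZ/2\ZZ$ to the general orbifold presentation \eqref{eq:freeproduct2} forces the signature of $V$ to be $(g;e_1,\ldots,e_m;n)=(0;2,\ldots,2;1)$, i.e.\ genus zero with exactly one cusp and all orbifold points of order two, so the coarse moduli space $W$ of $V$ is a sphere with one point removed and is therefore contractible.

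The crux is the fundamental-domain construction in \eqref{it:fuchs:it4}: one must rule out side-pairings by non-involutory elements of $G$ (which would add free generators, producing positive genus or extra cusps) and vertex cycles of length greater than one in the interior of $\HH$ (which would add higher-order relations or elliptic elements of order $>2$). The key geometric input is that the product of two half-turns at distinct points of $\HH$ is a hyperbolic translation along the geodesic through their fixed points, of translation length $2d(p_i,p_j)$, and is hence of infinite order; this excludes new elliptic elements as short products of generators. Combined with Lemma \ref{lem:closure} applied to the ambient Fuchsian group $\Gamma^+_0(\delta)$ to control the cusp structure, a careful Dirichlet-type construction should produce the required folded-edge presentation.
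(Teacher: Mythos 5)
Your reduction of \eqref{it:fuchs:it1}--\eqref{it:fuchs:it3} to \eqref{it:fuchs:it4} is fine, but the proof of \eqref{it:fuchs:it4} itself has a genuine gap at exactly the point you flag as the crux. You assert that $G$ admits a convex fundamental domain all of whose side-pairings are half-turns folding each edge onto itself, and you justify this by observing that the product of two half-turns at distinct points is hyperbolic. That observation only rules out elliptic elements arising as products of \emph{two} generators; it says nothing about longer words. In fact a Fuchsian group generated by involutions can perfectly well contain elliptic elements of order $>2$ and fail to be a free product of $\ZZ/2\ZZ$'s: the $(2,3,7)$ von Dyck group is generated by its involutions (killing the normal closure of the order-two generator kills the whole group), yet it contains elements of order $3$ and $7$. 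So the statement you are trying to establish by a purely local geometric argument is simply false for general Fuchsian groups generated by half-turns; any correct proof must use the hypothesis $G\subset \Gamma^+_0(\delta)$ in an essential way, namely that $\HH/\Gamma^+_0(\delta)$ is non-compact and hence so is $\HH/G$. Your appeal to Lemma \ref{lem:closure} ``to control the cusp structure'' does not do this work: that lemma produces a fundamental domain touching $\infty$, but does not by itself exclude side-pairings by non-involutory elements or interior vertex cycles of higher order. A secondary issue: your derivation of \eqref{it:fuchs:it2} from \eqref{it:fuchs:it4} via the finite-type presentation \eqref{eq:freeproduct2} does not apply when $G$ is infinitely generated, which actually occurs (see the degree $8$ example in \S\ref{sec:examples}).

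The paper runs the logic in the opposite order precisely to avoid the fundamental-domain construction. It first proves \eqref{it:fuchs:it2}: the surjection $G\cong\pi_1([\HH/G],x)\twoheadrightarrow\pi_1(\HH/G,x)$ has torsion-generated source and torsion-free target (fundamental groups of ordinary Riemann surfaces have no torsion), so $\HH/G$ is simply connected; it is non-compact because it covers the non-compact surface $\HH/\Gamma^+_0(\delta)$, hence contractible by uniformization. Corollary \ref{cor:riemann} then gives $G\cong\ast_i\langle\tau_i\rangle/(\tau_i^{e_i})$ with no long relation, and projecting the torsion-generated group $G$ onto each factor forces $e_i=2$; Lemma \ref{lem:ordertwo2} finishes \eqref{it:fuchs:it1}, \eqref{it:fuchs:it3}, \eqref{it:fuchs:it4}. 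If you want to keep your geometric route, you would need to first establish non-compactness of $\HH/G$ and then invoke the structure theory of non-cocompact Fuchsian groups (or carry out the Poincar\'e-theorem analysis with the cusp present), at which point the argument is no shorter than the topological one.
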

\begin{proof}
Let $V=[\HH/G]$ be the orbifold quotient so that the ordinary quotient $W=\HH/G$ is the topological quotient (the coarse moduli space of $V$).  Choose a base point $x$ and loops $({{\tau}}_i)_i$ as in Lemma \ref{lem:orbi}.
Since $\HH$ is simply connected, after choosing a lift $y\in \HH$ of $x$, we have a canonical identification $G\cong \pi_1(V,x)$. Since every path in $W$ can be lifted to a path in $V$, after a small deformation, we have a surjection:
\[
G\cong \pi_1(V,x) \r \pi_1(W,x).
\]
Since $W$ is an ordinary Riemann surface,  its fundamental group has no torsion (\cite[Proposition IV.6.5]{FarkasKra}).
Since $G$ is generated by $2$-torsion elements
this implies that $\pi_1(W,x)$ is trivial, or in other words that $W$ is simply connected. Since $W$ is a (ramified) covering of the non-compact (see \S \ref{sec:fuchsian}) Riemann
surface $\HH/\Gamma^+_0(\delta)$, $W$ cannot be
compact. Hence $W$ is contractible by the uniformization theorem. This finishes the proof \eqref{it:fuchs:it2}.

By Lemma \ref{cor:riemann} we now have
\[
G = \pi_1(V,x)=\ast_i (\langle {{\tau}}_i\rangle/({{\tau}}_i^{e_i}))
\]
Consequently for each $i$ we have a surjection
\[
G\r \langle {{\tau}}_i\rangle/({{\tau}}^{e_i}_i)
\]
whose image lies in the $2$-torsion part of $\langle{{\tau}}_i\rangle/({{\tau}}_i^{e_i})$. It follows that $e_i=2$ for all $i$. 
We now apply Lemma \ref{lem:ordertwo2} 
with $R=\{({{{\tau}}}_i)_i\}\subset G$ and $S$ as in the statement of
Lemma \ref{lem:fuchs}. From Lemma \ref{lem:ordertwo2}\eqref{it:ordertwo2:it4}
we obtain \eqref{it:fuchs:it4}. From Lemma \ref{lem:ordertwo2}\eqref{it:ordertwo2:it5} we obtain that $S$ is precisely the set of order two elements in $G$.
We then get \eqref{it:fuchs:it3} from the definition of $S$. 

To prove \eqref{it:fuchs:it1} we observe that by definition the fixed points of the elements of $G$ are orbits of the fixed points of the $({{\tau}}_i)_i$.
By Lemma \ref{lem:ordertwo2}\eqref{it:ordertwo2:it4} we have $\{({{\tau}}_i)_i\}\subset S$. Hence the fixed points of $G$ are the fixed points of the elements of $S$ (since
$S$ is closed under conjugation) which in turn are the orbits of the fixed points of the $(s_i)_i$. This proves \eqref{it:fuchs:it1}.
\end{proof}
\begin{remark}\label{rem:topreal}
One may now give a topological realisation of the exact sequence  \eqref{eq:concrete}.
Let the notations be as in the statement and the proof of Lemma
\ref{lem:fuchs}. Let $\Sscr\subset \HH$ be the set of fixed points of
elements of $G$. By Lemma \ref{lem:fuchs}\eqref{it:fuchs:it1} we see that sending $s\in S$ to its unique fixed point provides a bijection between $S$ and $\Sscr$.
By Lemma \ref{lem:fuchs}\eqref{it:fuchs:it1} we have
a ramified $G$-covering $\HH\r \HH/G$ which restricts to an unramified
$G$-covering $\HH-\Sscr\r V^\circ$ where as before $V^\circ$ is obtained from
$V=[\HH/G]$ by removing the orbifold points. We get a short exact
sequence
\begin{equation}\label{eq:orbifoldses}
1\r \pi_1(\HH-\Sscr,y)\xrightarrow{\varphi} \pi_1(V^\circ,x)\r G\r 1
\end{equation}
If $\gamma$ is a path from $y$ to a point $y'$ in $\Sscr$ and $\bar{\gamma}$ is the image of $\gamma$ in $V^\circ$ then since $y'$ is a ramification point of order two we have
\begin{equation}
\label{eq:ram}
\varphi(\tau_\gamma)=\tau^2_{\overline{\gamma}}.
\end{equation}

By Lemma \ref{lem:fuchs}\eqref{it:fuchs:it2} we have that $\HH/G$ is contractible. Hence $\pi_1(V^\circ,x)$ is the free group generated by $(\tau_i)_i$, so its free basis is indexed by $R$ as in Lemma \ref{lem:concrete}.
On the other hand $\pi_1(\HH-\Sscr,y)$ is freely generated by a system of loops in $\HH-\Sscr$ associated to a system of non-intersecting paths from $y$ to the points in $\Sscr$, so its free basis is indexed by $S$ as in Lemma \ref{lem:concrete}. 
\end{remark}
\subsection{Application}
Assume now we are in the standard setting of \S \ref{sec:SO} where we have identified $\PP(\Nscr(Y)_\RR)_{<0}\cong \HH$.  Let $G$ be the group generated by reflections at roots in $\PP(\Nscr(Y)_\RR)_{<0}$.  As mentioned before,  it is known that
$\SO^+(q, \ZZ)\cong\Gamma^+_0(\delta)$ (see Proposition \ref{prop:SOisGamma+} for an independent proof).

Recall also that the modular curve $X_0^+(\delta) := \HH / \Gamma_0^+(\delta)$ is an orbifold Riemann surface of finite type (see \S \ref{sec:fuchsian}),  hence we have a presentation of $\Gamma^+_0(\delta)$
as in \eqref{eq:freeproduct2}.  Let $(x_i)_{i\in T}\subset (x_i)_i$ be those fixed points (when lifted to $\HH$) which correspond to roots.  In the notations of the presentation \eqref{eq:freeproduct2} for $\Gamma^+_0(\delta)$,  the cyclic subgroup $\langle \tau_i \rangle / (\tau_i^{e_i})$ is stabiliser of $x_i$, $i\in T$.  This stabiliser contains the point reflection with respect to $x_i$ which has order two,  hence $e_i$ must be even.  Moreover,  $\tau^{e_i/2}$ is the unique element of order $2$ in $\langle \tau_i \rangle / (\tau_i^{e_i})$, hence it is the point reflection at $x_i$.  To summarise,  $G$ is generated by the conjugates of $(\tau^{e_i/2}_i)_{i\in T}$.

\begin{lemma}\label{lem:freeproduct}
Let $(G_i)_i$ be a collection of groups with normal subgroups $H_i\subset G_i$.  
The kernel of
$
\ast_i G_i\r \ast_i (G_i/H_i)$
is generated by the conjugates of the $H_i$.
\end{lemma}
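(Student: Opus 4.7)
The plan is to show that the kernel equals the \emph{normal closure} $N$ of $\bigcup_i H_i$ in the free product $\ast_i G_i$, i.e.\ the subgroup of $\ast_i G_i$ generated by all conjugates $w H_i w^{-1}$ with $w \in \ast_i G_i$. The inclusion $N \subseteq \ker(\ast_i G_i \to \ast_i (G_i/H_i))$ is immediate, since by construction each $H_i$ lies in the kernel and the kernel is normal. So the content of the statement is the reverse inclusion, and I would establish this using the universal property of free products.

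The key step is to produce an inverse to the obvious surjection
\[
\pi \colon (\ast_i G_i)/N \longrightarrow \ast_i (G_i/H_i)
\]
which is induced by the quotient map $\ast_i G_i \to \ast_i(G_i/H_i)$ (well-defined because $H_i \subset N$). For each $i$, the composition $G_i \hookrightarrow \ast_i G_i \twoheadrightarrow (\ast_i G_i)/N$ kills $H_i$ (since $H_i \subset N$), so it factors through a homomorphism $G_i/H_i \to (\ast_i G_i)/N$. By the universal property of the free product these assemble into a single homomorphism
\[
\psi \colon \ast_i (G_i/H_i) \longrightarrow (\ast_i G_i)/N.
\]

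It then suffices to check that $\psi$ and $\pi$ are mutually inverse, which can be verified on the generating sets: on an element $\bar g \in G_i/H_i$ one has $\pi\psi(\bar g) = \bar g$ by construction, and on the image of $g \in G_i$ in $(\ast_i G_i)/N$ one has $\psi\pi(g \bmod N) = g \bmod N$ for the same reason. Since both maps are group homomorphisms, equality on generators gives equality everywhere. Hence $\pi$ is an isomorphism, so $\ker\bigl(\ast_i G_i \to \ast_i(G_i/H_i)\bigr) = N$, which is exactly the claim.

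There is no real obstacle here: the argument is a routine application of the universal property of the free product, and the only thing to be slightly careful about is verifying that $\psi$ is well defined (which uses $H_i \subset N$ rather than $H_i \subset \ker\pi$, so that the argument is genuinely circular-free).
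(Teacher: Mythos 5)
Your proof is correct and takes essentially the same route as the paper: the paper also identifies the kernel with the normal closure of the $H_i$ and verifies that the quotient satisfies the universal property of $\ast_i(G_i/H_i)$, which is exactly what your construction of the mutually inverse maps $\pi$ and $\psi$ accomplishes.
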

\begin{proof} Let $K$ be the subgroup of $\ast_i G_i$ generated by the conjugates of $H_i$. Then $K$ is a normal subgroup and one checks that $(\ast_i G_i)/K$ is
the categorical coproduct of the $(G_i/H_i)_i$, by verifying the universal property.
\end{proof}
By Lemma \ref{lem:freeproduct} and the discussion preceding it, together with \eqref{eq:freeproduct2},
we find that $G$ can be described as the kernel 
\begin{equation}\label{eq:GasKer}
0 \to G \hookrightarrow \Gamma^+_0(\delta)\twoheadrightarrow H \to 0
\end{equation} where $H$ is the free product of the following factors
\begin{enumerate}[label=(\emph{\roman*}), ref=\emph{\roman*}]
\item \label{it:f1} $\langle a_i\rangle$, $\langle b_i\rangle$ for $i=1,\ldots g$,  $\langle
\tau_i\rangle$ for $i=m+1,\ldots, m+n-1$.
\item \label{it:f2} $\langle \tau_i\rangle /(\tau_i^{e_i})$ for $i=1,\ldots,m$, $i\not\in T$.
\item \label{it:f3} $\langle \tau_i\rangle /(\tau_i^{e_i/2})$ for $i=1,\ldots,m$, $i\in T$.
\end{enumerate}
\begin{corollary}\label{cor:finitegen} The group $G$ is finitely generated if and only if all of the following
conditions hold:
\begin{enumerate}
\item the genus of $X^+_0(\delta):=\HH/\Gamma^+_0(\delta)$ is zero;
\item $X^+_0(\delta)$ has one cusp;
\item\label{it:cor:fingen} there is at most one point $x\in X^+_0(\delta)$ with one of the following properties:
\begin{enumerate}
\item $x$ is a fixed point which is not a root point;
\item $x$ is a root point of order $>2$.
\end{enumerate}
\end{enumerate}
\end{corollary}
\begin{proof}
It is a standard fact,  which is a consequence of Schreier's lemma, that a finite-index subgroup of a finitely generated group is finitely generated\footnote{Alternatively,  this can be proved by identifying a finitely generated group with the fundamental group of a CW complex with a finite 1-skeleton.  Then its covering space corresponding to a subgroup of finite index is also a CW complex with a finite 1-skeleton. },  so $G$ is finitely generated provided that $H$ is finite.  The converse is given by \cite[Theorem 1]{KarrassSolitar}, i.e.  if $G$ is finitely generated, then $H$ is finite.  On the other hand, $H$ is finite if and only if it consists of a single group of type
\eqref{it:f2} or \eqref{it:f3}. This is equivalent to the description in the statement. 
\end{proof}

\begin{lemma}\label{lm:fingendeg} The group $G$ is finitely generated if and only if 
\[
\delta \in \{1,2,3,5,7,11,17,19,23,29,31,41,47,59,71\}.
\]
 \end{lemma}
\begin{proof} We determine all $\delta$ for which all of the conditions in Corollary \ref{cor:finitegen} are satisfied.  \cite[Table 4]{ChuaLang} contains the signatures of all $\Gamma_0^+(\delta)$ with genus zero.  Picking those with one cusp,  we get a list of $44$ possible value of $\delta$.  It remains to determine those for which the condition (\ref{it:cor:fingen}) in Corollary \ref{cor:finitegen} is satisfied.  According to \cite[Table 4]{ChuaLang} the possible orders of elliptic elements of $\Gamma_0^+(\delta)$ are $2,3,4$ and $6$.  A fixed point of order $3$ cannot be a root point.  A fixed point of order $4$ or $6$ is a root point precisely when the unique involution in its cyclic stabiliser is a root reflection.  Using the elliptic element representatives listed in \cite[Remark 5.4]{ChuaLang} one explicitly checks that the unique fixed point of order $4$ is not a root point in all cases,  while the unique fixed point of order $6$ is a root point only for $\delta = 3$.  Finally,  using \cite[Table 6]{ChuaLang} one determines the number of non-root fixed points of order $2$ for each of the $44$ possible values of $\delta$.  All fixed points of order $2$ are root points for $\delta \in \{1,2,3,7,11,19,23,31,47,59,71\}$.  In addition,  for $\delta \in \{5,17,29,41\}$ there is a unique non-root fixed point and no fixed points of other orders.  For other values of $\delta$ there are at least $2$ non-root fixed points of order $2$.  This gives the list in the statement. 
\end{proof} 

The description of $G$ as a kernel of a map between free products allows us to give explicit generators for $G$ via Lemma \ref{lem:freeproduct2} below which is a more precise version of Lemma \ref{lem:freeproduct}.
\begin{lemma} \label{lem:freeproduct2}
Let $A_i\subset G_i$ be right coset representatives for $H_i$.
Let $G_i/H_i\r G_i:aH_i\mapsto a$ for $a\in A_i$ be the corresponding section and
extend this to a section $\phi: \ast_i (G_i/H_i)\r \ast_i G_i$.  
The kernel of $\ast_i G_i\r \ast_i (G_i/H_i)$
is generated by $(\phi(u) H_i \phi(u)^{-1})_i$ for $u\in \ast_i (G_i/H_i)$.
\end{lemma}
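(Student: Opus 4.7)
Let $L$ denote the subgroup of $\ast_i G_i$ generated by the elements $\phi(u)\, h\, \phi(u)^{-1}$ for $u \in \ast_i(G_i/H_i)$, indices $i$, and $h \in H_i$. The inclusion $L \subseteq K$ is immediate since each $H_i \subseteq K$ and $K$ is normal, so the task is to prove $K \subseteq L$. My plan is to express every $g \in K$ as an explicit telescoping product of generators of $L$, thereby avoiding the circular recursion one encounters if one tries to verify normality of $L$ in $\ast_i G_i$ directly.

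Given $g \in K$, write $g = y_1 y_2 \cdots y_n$ in reduced normal form in the free product, with $y_l \in G_{j_l}$ and adjacent indices distinct, and set $u_l := \bar y_1 \bar y_2 \cdots \bar y_l \in \ast_i(G_i/H_i)$ for $l = 0, \ldots, n$, where $\bar y_l$ denotes the image of $y_l$ under $\pi$. Since $g \in K$ we have $u_0 = u_n = 1$ and hence $\phi(u_0) = \phi(u_n) = 1$. Define $s_l := \phi(u_{l-1})\, y_l\, \phi(u_l)^{-1}$ for $l = 1, \ldots, n$. The intermediate factors $\phi(u_l)^{-1} \phi(u_l)$ telescope to give
\[
s_1 s_2 \cdots s_n \;=\; \phi(u_0)\, y_1 y_2 \cdots y_n\, \phi(u_n)^{-1} \;=\; g,
\]
so it suffices to show that each $s_l$ lies in $L$.

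To see this, decompose $y_l = a_l h_l$ using the coset decomposition $G_{j_l} = A_{j_l} \cdot H_{j_l}$, with $a_l \in A_{j_l}$ representing $\bar y_l$ and $h_l \in H_{j_l}$. The key claim is that $\phi(u_{l-1})\, a_l = \phi(u_l)\, h^*$ for some $h^* \in H_{j_l}$; granting this, one immediately obtains $s_l = \phi(u_l)\, (h^* h_l)\, \phi(u_l)^{-1} \in L$. Verifying this claim is the main obstacle in the argument: the two elements $\phi(u_{l-1}) a_l$ and $\phi(u_l)$ both project to $u_{l-1} \bar a_l = u_l$ under $\pi$, so they differ by an element of $K$, and the content of the claim is that this element actually lies in the much smaller subgroup $H_{j_l}$. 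I expect this to follow from a short case analysis on how the product $u_{l-1} \cdot \bar a_l$ reduces in $\ast_i(G_i/H_i)$---whether $u_{l-1}$ ends in a letter of $G_{j_l}/H_{j_l}$ and, if so, whether that letter combines with $\bar a_l$ to $1$ or to a non-trivial element---with each sub-case handled using the fact that for any $b \in G_{j_l}$ the coset representative $\phi(\bar b) \in A_{j_l}$ satisfies $\phi(\bar b)^{-1} b \in H_{j_l}$.
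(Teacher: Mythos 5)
Your argument is correct, and it takes a genuinely different route from the paper's. The paper's proof is a one-line normality check: the subgroup $L$ generated by the elements $\phi(u)H_i\phi(u)^{-1}$ is closed under conjugation by each $G_i$, hence normal; since it contains each $H_i$ (take $u=1$) it contains their normal closure, which is the kernel by Lemma \ref{lem:freeproduct}, and the reverse inclusion is obvious. You instead run the Reidemeister--Schreier rewriting explicitly: the telescoping factorisation $g=s_1\cdots s_n$ with $s_l=\phi(u_{l-1})\,y_l\,\phi(u_l)^{-1}$ is exactly the standard rewriting with respect to the transversal $\phi(\ast_i(G_i/H_i))$, and the case analysis you defer does go through. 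In each case --- $\bar a_l=1$; the reduced form of $u_{l-1}$ does not end in a letter of $G_{j_l}/H_{j_l}$; it does and $\bar c_m\bar a_l\neq 1$; it does and $\bar c_m\bar a_l=1$ --- the multiplicativity $\phi(vw)=\phi(v)\phi(w)$ for reduced concatenations, together with the fact that $\phi(\bar b)^{-1}b\in H_{j_l}$ for $b\in G_{j_l}$, gives $\phi(u_{l-1})a_l\in\phi(u_l)H_{j_l}$, whence $s_l=\phi(u_l)(h^{*}h_l)\phi(u_l)^{-1}\in L$. What your approach buys is an argument independent of Lemma \ref{lem:freeproduct} (which it reproves as a byproduct) and an explicit expression of every kernel element in the stated generators; the paper's version is shorter on the page but its ``one checks'' conceals essentially the same bookkeeping about how $g\phi(u)$ compares with $\phi(\bar g u)$, packaged as closure under conjugation rather than as a rewriting.
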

\begin{proof} One checks that the subgroup of $\ast_i G_i$ generated
by  $(\phi(u) H_i \phi(u)^{-1})_i$ is closed under by conjugation by $G_i$.
\end{proof}

\section{Groups generated by spherical twists in $\Aut \Dscr^b(Y)$} 
\subsection{Introduction}
In this section we obtain our first main result (Theorem \ref{thm:sphtwistfree}),  which gives a precise description of the groups generated by spherical twists in $\Aut \Dscr^b(Y)$ for K3 surfaces $Y$ of Picard rank $1$. 

\subsection{Introduction}
We start with some introductory comments.  Throughout $Y$ is a K3 surface of Picard rank $1$ and degree $d = 2\delta$. First observe the following.
\begin{lemma} The group $\Aut^+ \H^\ast(Y,\ZZ)$ acts properly discontinuously (cfr. \S\ref{sec:fuchsian})
on $\PP(\Nscr(Y)_\RR)_{<0}\cong \Pscr^+(Y)/\GL^+_2(\RR)$ (cfr. \S\ref{sec:SO}) and hence
also on $\Pscr^+(Y)$. In particular the quotients 
\[
[\Pscr^+(Y)/\Aut^+ \H^\ast(Y,\ZZ)]\text{ and } [\PP(\Nscr(Y)_\RR)_{<0}/\Aut^+ \H^\ast(Y,\ZZ)]
\]
are smooth orbifolds.
\end{lemma}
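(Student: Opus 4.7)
The plan is to reduce the claim to the fact that a Fuchsian group acts properly discontinuously on $\HH$ by M\"obius transformations (Theorem \ref{thm:fuchsian}), via the identifications established in \S\ref{sec:SO}.

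First, I would verify that the action of $\Aut^+ \H^\ast(Y,\ZZ)$ descends to $\PP(\Nscr(Y)_\RR)_{<0}$. Since elements of $\Aut^+ \H^\ast(Y,\ZZ)$ are Hodge isometries, they preserve the $(1,1)$-part and therefore the Mukai lattice $\Nscr(Y)$. The induced $\RR$-linear action on $\Nscr(Y)_\RR$ extends $\CC$-linearly to $\Nscr(Y)_\CC$, commuting with the $\GL^+_2(\RR)$-action (which is defined via the identification $\CC\cong \RR^2$), so it preserves $\Pscr^+(Y)$ and passes to $\Pscr^+(Y)/\GL^+_2(\RR)\cong \PP(\Nscr(Y)_\RR)_{<0}$. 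Using \eqref{eq:isom2}, this gives a factorisation
\[
\Aut^+\H^\ast(Y,\ZZ)\r \Aut^+\Nscr(Y)\r \Aut^+\Nscr(Y)/\{\pm 1\}\xrightarrow{\cong}\SO^+(q,\RR).
\]

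Next I would argue that the image $\overline{G}\subset \SO^+(q,\RR)\cong \PSL_2(\RR)$ is a Fuchsian group. This is immediate from the inclusion $\Aut^+\Nscr(Y)\subset O(q,\ZZ)$, since $O(q,\ZZ)$ is a discrete subgroup of $O(q,\RR)$; hence its image in $\PSL_2(\RR)$ is discrete, and Theorem \ref{thm:fuchsian} yields a properly discontinuous action of $\overline{G}$ on $\HH\cong \PP(\Nscr(Y)_\RR)_{<0}$. To lift this back to $\Aut^+\H^\ast(Y,\ZZ)$ I need the kernel $K$ of $\Aut^+\H^\ast(Y,\ZZ)\to \overline{G}$ to be finite. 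For $d\ge 4$ this kernel is exactly $\{\pm 1\}$, by combining the injectivity of the restriction map (Lemma \ref{lem:HvsN}) with the $\{\pm 1\}$-kernel of \eqref{eq:isom2}; for $d=2$ the statement of Lemma \ref{lem:HvsN} still implies that $K$ is finite. A finite-kernel extension of a properly discontinuous action is again properly discontinuous, so proper discontinuity of $\Aut^+\H^\ast(Y,\ZZ)$ on $\PP(\Nscr(Y)_\RR)_{<0}$ follows.

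Proper discontinuity on $\Pscr^+(Y)$ itself then comes for free: the projection $\pi\colon \Pscr^+(Y)\r \PP(\Nscr(Y)_\RR)_{<0}$ is continuous and $\Aut^+\H^\ast(Y,\ZZ)$-equivariant, so if $K\subset \Pscr^+(Y)$ is compact and $gK\cap K\ne \varnothing$ then $g\,\pi(K)\cap \pi(K)\ne \varnothing$, which occurs for only finitely many $g$. I expect the only subtle point to be verifying finiteness of the kernel in the $d=2$ case; everything else is a mechanical application of the structural results in \S\ref{sec:SO}. The smooth orbifold conclusion is then automatic, since $\Pscr^+(Y)$ and $\PP(\Nscr(Y)_\RR)_{<0}$ are smooth real manifolds on which $\Aut^+\H^\ast(Y,\ZZ)$ acts by real-analytic diffeomorphisms, and a properly discontinuous action with finite point stabilisers on a smooth manifold always yields a smooth orbifold quotient.
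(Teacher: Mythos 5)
Your proposal is correct and follows essentially the same route as the paper: reduce to $\Aut^+\Nscr(Y)$ via Lemma \ref{lem:HvsN} (handling the finite kernel, notably for $d=2$), transfer to a discrete subgroup of $\PSL_2(\RR)$ acting on $\HH\cong\PP(\Nscr(Y)_\RR)_{<0}$, and invoke the Fuchsian property; the only cosmetic difference is that you deduce discreteness directly from $\Aut^+\Nscr(Y)\subset O(q,\ZZ)$ rather than citing the identification with $\Gamma_0^+(\delta)$ from Remark \ref{rem:pm1}.
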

\begin{proof} The action of $\Aut^+ \H^\ast(Y,\ZZ)$ on $\Pscr^+(Y)/\GL^+_2(\RR)$ factors through the action of $\Aut^+ \Nscr(Y)$, so by Lemma \ref{lem:HvsN} it suffices to prove this result for $\Aut^+ \Nscr(Y) $ instead of $\Aut^+ \H^\ast(Y,\ZZ) $ and by 
Corollary \ref{cor:kawatani} combined with Remark \ref{rem:pm1} we may further replace $(\PP(\Nscr(Y)_\RR)_{<0},\Aut^+\Nscr(Y))$ by $(\HH,\Gamma^+_0(\delta))$.
We conclude by the fact that $\Gamma^+_0(\delta)$ is a Fuchsian group (see \S\ref{sec:fuchsian}).
\end{proof}
\begin{lemma}[see also \cite{FanLai}]
\label{lem:tautological}
Assume $\Gscr$ is a subgroup of $\Aut \Dscr^b(Y)$ containing $\Aut^0\Dscr^b(Y)$ and let $G$ be its image in $\Aut^+ H^\ast(Y,\ZZ)$. We have an isomorphism
\[
\Gscr=\pi_1([\Pscr_0^+(Y)/G])
\]
such that the long exact sequence of homotopy groups for the map ${\Pscr_0^+(Y)\r [\Pscr_0^+(Y)/G]}$ yields
\begin{equation}
\label{eq:tautological}
 1\r \Aut^0\Dscr^b(Y)\r  \Gscr\r G\r 1
\end{equation}
\end{lemma}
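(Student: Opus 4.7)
The strategy is to realize $\Stab^\dagger(Y)$ as the universal orbifold cover of $[\Pscr_0^+(Y)/G]$, at which point the fundamental group identification and the exact sequence both follow essentially formally from Theorems \ref{thm:BK3} and \ref{thm:mainBB}.

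First, I would check that $\Gscr$ preserves $\Stab^\dagger(Y)$ and acts properly discontinuously on it. The first point is immediate from Theorem \ref{thm:mainBB}, which asserts that $\Aut \Dscr^b(Y)$ itself preserves $\Stab^\dagger(Y)$. For proper discontinuity, note that the normal subgroup $\Aut^0 \Dscr^b(Y) \subset \Gscr$ acts freely on $\Stab^\dagger(Y)$ as the group of deck transformations of the Galois covering $\pi \colon \Stab^\dagger(Y) \to \Pscr_0^+(Y)$ (Theorem \ref{thm:BK3}), while the quotient $G = \Gscr / \Aut^0 \Dscr^b(Y)$ acts properly discontinuously on $\Pscr_0^+(Y)$ by the preceding lemma (since $G \subseteq \Aut^+ \H^*(Y, \ZZ)$). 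A standard argument for group extensions then yields proper discontinuity of the combined $\Gscr$-action on $\Stab^\dagger(Y)$.

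Next, I would identify the two orbifold quotients. Quotienting $\Stab^\dagger(Y)$ first by $\Aut^0 \Dscr^b(Y)$ gives the honest manifold quotient $\Pscr_0^+(Y)$, and a further quotient by $G$ gives the orbifold $[\Pscr_0^+(Y)/G]$; so we obtain a natural identification of orbifolds
\[
[\Stab^\dagger(Y) / \Gscr] \cong [\Pscr_0^+(Y)/G].
\]
Since $\Stab^\dagger(Y)$ is contractible by Theorem \ref{thm:mainBB} and $\Gscr$ acts properly discontinuously, the orbifold on the left is a $K(\Gscr, 1)$. In particular its orbifold fundamental group equals $\Gscr$, yielding the desired isomorphism $\Gscr \cong \pi_1([\Pscr_0^+(Y)/G])$.

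For the short exact sequence \eqref{eq:tautological}, I would invoke the long exact sequence of orbifold homotopy groups for the Galois orbifold covering $\Pscr_0^+(Y) \to [\Pscr_0^+(Y)/G]$ with deck group $G$. Using that all higher orbifold homotopy groups of $[\Pscr_0^+(Y)/G]$ vanish (its universal cover $\Stab^\dagger(Y)$ being contractible) and that $\pi_1(\Pscr_0^+(Y)) = \Aut^0 \Dscr^b(Y)$ by Theorem \ref{thm:mainBB}, the exact sequence collapses to \eqref{eq:tautological}. The only subtle technical point is the proper discontinuity of the $\Gscr$-action on $\Stab^\dagger(Y)$, but as sketched above this follows painlessly from the extension structure and the corresponding property of the residual $G$-action on $\Pscr_0^+(Y)$.
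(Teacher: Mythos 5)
Your proposal is correct and follows essentially the same route as the paper: the paper's proof consists of the single observation that $\Gscr$ acts on the contractible space $\Stab^\dagger(Y)$, giving the tautological identification $\Gscr=\pi_1([\Stab^\dagger(Y)/\Gscr])=\pi_1([\Pscr_0^+(Y)/G])$, with the exact sequence following ``from the general theory''. You have simply filled in the details the paper leaves implicit (proper discontinuity via the extension structure, the identification of the two orbifold quotients, and the collapse of the long exact sequence), all correctly.
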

\begin{proof} 
Recall that the space $\Stab^\dagger(Y)$ is contractible and $\pi: \Stab^\dagger(Y) \to \Pscr_0^+(Y)$ is a Galois covering whose group of deck transformations can be identified with $\Aut^0\Dscr^b(Y) $ (see Theorems \ref{thm:BK3}, \ref{thm:mainBB}).  Hence we obtain a tautological isomorphism
\[
\Gscr=\pi_1([\Stab^\dagger(Y)/\Gscr])=\pi_1([\Pscr_0^+(Y)/G])
\]
and it follows from the general theory that this leads to the exact sequence \eqref{eq:tautological}.
\end{proof}

In what follows,  we will have to give some special treatment to the case of K3 surfaces of degree $2$. 
\begin{proposition}{\cite[Corollary 15.2.12]{Huybrechts}}\label{prop:d2}
 A K3 surface $Y$ of Picard rank $1$ has a non-trivial automorphism group if and only if it has degree $2$. In the latter case there is a (unique) nontrivial automorphism $\iota \in \Aut(Y)$,  which is the covering involution.  
The involution $\iota$ acts as the identity on $\Nscr(Y)$ and as $-1$ on the transcendental lattice $T(Y)=\Nscr(Y)^\perp\subset H^2(Y,\ZZ)$.  
\end{proposition}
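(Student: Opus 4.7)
The plan is to combine the global Torelli theorem for K3 surfaces with a lattice-theoretic analysis of Hodge isometries of $H^2(Y,\ZZ)$ under the Picard rank $1$ hypothesis.

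First I would use Torelli to identify $\Aut(Y)$ with the group of Hodge isometries of $H^2(Y,\ZZ)$ preserving the K\"ahler cone. For such an isometry $\phi = f^*$, the ample generator $H$ of $\Pic(Y) = \ZZ H$ is the unique primitive ample class in its ray, so $\phi(H) = H$; hence $\phi$ acts trivially on $\Nscr(Y) = \H^0(Y,\ZZ) \oplus \Pic(Y) \oplus \H^4(Y,\ZZ)$ and restricts to a Hodge isometry $\psi$ of $T(Y) := \Pic(Y)^\perp$.

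The key intermediate step is to show $\psi = \pm \id_{T(Y)}$. Since $f$ preserves $H$, it lies in the finite polarized automorphism group $\Aut(Y,H)$, so $\psi$ has finite order and its eigenvalue on $\H^{2,0}(Y) = \CC\sigma$ is a root of unity $\lambda$. I would then invoke Zarhin's dichotomy (the Hodge endomorphism field of $T(Y)_\QQ$ is either totally real or CM) and Nikulin's constraints on non-symplectic automorphisms, combined with $\operatorname{rank} T(Y) = 21$, to force $\lambda \in \{\pm 1\}$ and indeed $\psi = \pm \id$. Granting this, I would ask when $(\id_{\Pic(Y)},\, -\id_{T(Y)})$ extends to an integral isometry of the unimodular lattice $H^2(Y,\ZZ)$: the discriminant groups of $\Pic(Y)$ and $T(Y)$ are both cyclic of order $d = 2\delta$, identified up to sign by unimodularity, so the extension condition reduces to $1 \equiv -1 \pmod{d}$, i.e.\ $d = 2$.

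When $d = 2$, the linear system $|H|$ realizes $Y$ as a double cover $\pi \colon Y \to \PP^2$ branched along a smooth sextic, and the deck involution $\iota$ provides a nontrivial automorphism realizing $(\id_{\Pic(Y)},\, -\id_{T(Y)})$; this gives $\iota^*|_{\Nscr(Y)} = \id$ and $\iota^*|_{T(Y)} = -\id$, and uniqueness of $\iota$ follows from the count above. The hard part will be the claim that every finite-order Hodge isometry of $T(Y)$ with $\rho(Y) = 1$ is $\pm \id$: this combines the bound $\varphi(n) \leq \operatorname{rank} T(Y)^\circ$ for the coinvariant sublattice of an order-$n$ isometry with structural facts on Hodge endomorphism fields of transcendental lattices, whereas the remaining arithmetic of discriminant forms and the appeal to Torelli are routine.
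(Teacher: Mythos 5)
The paper does not actually prove this proposition: it is quoted verbatim from \cite[Corollary 15.2.12]{Huybrechts}, so there is no internal proof to compare against. Your outline is essentially the standard argument from that reference and is sound: Torelli reduces everything to Hodge isometries of $H^2(Y,\ZZ)$ fixing the ample class $H$, the restriction to $T(Y)$ is $\pm\id$, the gluing of discriminant forms along the unimodular overlattice forces $2\equiv 0 \pmod d$, and for $d=2$ the covering involution of $Y\to\PP^2$ realizes $(\id_{\Nscr(Y)},-\id_{T(Y)})$, with uniqueness again by Torelli. Two remarks. First, the intermediate claim you flag as the hard part --- every Hodge isometry of $T(Y)$ is $\pm\id$ when the Picard rank is odd --- is exactly Proposition \ref{prop:TY} of this paper (\cite[Corollary 3.3.5]{Huybrechts}), so you could simply cite it rather than re-deriving it; note also that it holds for \emph{all} Hodge isometries, not only finite-order ones, so the detour through $\Aut(Y,H)$ being finite is unnecessary. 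Second, if you do re-derive it, the inequality $\varphi(n)\le \operatorname{rank} T(Y)=21$ is too weak to force $n\le 2$; what you need is the divisibility $\varphi(n)\mid \operatorname{rank} T(Y)$ (since $\QQ(\zeta_n)$ acts on $T(Y)_\QQ$), which together with the oddness of $21$ and the fact that $\varphi(n)$ is even for $n>2$ gives $n\le 2$ --- equivalently, the totally-real versus CM dichotomy for the Hodge endomorphism field rules out the CM case in odd rank. Your lattice-theoretic non-extension step is also the same computation as Lemma \ref{lem:d2} in the paper (there phrased via the reflection formula $y\mapsto y-\tfrac{2\langle H,y\rangle}{\langle H,H\rangle}H$ and primitivity of $H$), so the two approaches agree where they overlap.
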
 

Now let $N$ be the subgroup of $\Aut \Dscr^b(Y)$ generated by
\[
\begin{cases}
[1] &\text{if $d\neq 2$}\\
[1],\iota^\ast &\text{if $d\neq 2$}
\end{cases}
\]

and let $\bar{N}$ be its image in $\Aut^+ \H^\ast(Y,\ZZ)$.  In Theorem \ref{thm:iotaseq} we will show that there is a short exact sequence
\begin{equation}
\label{eq:togamma1}
1\r \Aut^0\Dscr^b(Y)/[2] \r \Aut\Dscr^b(Y)/N\r F(\delta)\r 1
\end{equation}

The following was also observed in \cite{FanLai} (see (1.2) on p.4):
\begin{lemma}
\label{lem:tautological2}
Let $\Gscr$ be a subgroup of $\Aut\Dscr^b(Y)/N$ that contains $\Aut^0\Dscr^b(Y)/[2]$ and let $G$ be the image of $\Gscr$ in $F(\delta)$
under the rightmost map in \eqref{eq:togamma1}. Then
\[
\Gscr=\pi_1([\HH^0/G])
\]
in such a way that the long exact sequence for homotopy groups for the map ${\HH^0\r [\HH^0/G]}$ is precisely
\begin{equation}
\label{eq:tautological2}
 1\r \Aut^0\Dscr^b(Y)/[2]\r  \Gscr\r G\r 1
\end{equation}
\end{lemma}
\begin{proof}
 Put
\[
\Stab^n(Y):=\Stab^\dagger(Y)/\widetilde{\GL}_2^+(\RR)
\]
where $\widetilde{\GL}_2^+(\RR)$ is the universal cover of $\GL_2^+(\RR)$. Since $\widetilde{\GL}^+_2(\RR)$ and $\Stab^\dagger(Y)$ are contractible, the same is true for $\Stab^n(Y)$.

The element $[1]\in \Aut\Dscr^b(Y)$ acts on $\Stab^\dagger(Y)$solely by shifting the phases and multiplying the central charge by $-1$.  It is contained in $\widetilde{\GL}^+_2(\RR)$ so it acts trivially on $\Stab^n(Y)$. Similarly if $d=2$ then will show in Lemma \ref{lem:iotaprops} that $\iota$ 
as introduced in  Proposition \ref{prop:d2} acts trivially on $\Stab^\dagger(Y)$. 

Recall that $\Pscr^+_0(Y)/\GL_2^+(\RR) =\PP(\Nscr(Y)_\RR)_{<0}-\overline{\Delta^+(Y)}\cong \HH^0$.
We have a commutative diagram where the horizontal maps are coverings with the indicated Galois group (see \cite[Corollary 2.6]{Kawatani} for the fact the the bottom arrow is a Galois covering): 
\[
\begin{tikzcd}
\Stab^\dagger(Y)\ar[d,"{\widetilde{\GL}^+_2(\RR)}"']\arrow[rrr,"\Aut^0\Dscr^b(Y)"] &&& \Pscr^+_0(Y) \ar[d,"\GL_2^+(\RR)"]\\
\Stab^n(Y)\arrow[rrr,"{\Aut^0\Dscr^b(Y)/[2]}"'] &&& \PP(\Nscr(Y)_\RR)_{<0}-\overline{\Delta^+(Y)}
\end{tikzcd}
\]
Then we find in 
a similar way as in Lemma \ref{lem:tautological}
\[
\Gscr=\pi_1([\HH^0/G])
\]
and this isomorphism induces \eqref{eq:tautological2}.
\end{proof}
We will use the following result.
\begin{lemma} \label{lem:generation}
 Let $Y$ be a K3 surface with Picard rank 1.
Let 
\[
\Gscr=\langle T_S\mid S\in \Sph(Y)\rangle \subset \Aut \Dscr^b(Y)\]
be the subgroup generated by spherical twists.  Let $G$ be its image in $\Aut^+ \H^\ast(Y,\ZZ)$.
Let $(S_i)_{i\in I}\in \Sph(Y)$ be such
that $\{v(S_i)_i\}$ intersects all $G$-orbits in $\Delta(Y)$. Then we have:
\begin{enumerate}
\item \label{lem:generation:it1} if   $(T_{S_i})_i$ generates the image $\bar{\Gscr}$ of $\Gscr$ in $\Aut \Dscr^b(Y)/N$ then
 $(T_{S_i})_i$ generates~$\Gscr$;
\item \label{lem:generation:it2} if in addition the $T_{S_i}$ yield free generators for $\bar{\Gscr}$ then they are free generators for $\Gscr$.  In particular, $\Gscr \cong \bar{\Gscr}$.
\end{enumerate}
\end{lemma}
\begin{proof}
The claim \eqref{lem:generation:it2} is an obvious consequence of \eqref{lem:generation:it1}, so we concentrate on the latter.
The hypotheses imply that $\Gscr$ is generated by $(T_{S_i})_i$ and $N$. In other words if $S$ is a spherical object in $\Dscr^b(Y)$ then by Corollary \ref{cor:orbitreps}
and Lemma \ref{lem:iotaprops}\eqref{lem:iotaprops:it3}
$S$ can be written as
\[
S\cong T_{S_{i_1}}^{\pm 1}\cdots T_{S_{i_n}}^{\pm 1}  S_j[2n]
\]
for a suitable $S_j$ and $n\in \ZZ$. But then by Lemma \ref{lm:twistconj} we have
\[
T_S=T_{S[-2n]}= T_{S_{i_1}}^{\pm 1}\cdots T_{S_{i_n}}^{\pm 1} T_{S_j} T_{S_{i_n}}^{\mp 1}\cdots T_{S_{i_1}}^{\mp 1}
\]
In other words $T_S$ is contained in the subgroup of $\Gscr$ generated by $(T_{S_i})_i$. Since this is true for any $T_S$, this subgroup must in fact be equal to $\Gscr$.
\end{proof}

\subsection{Main result}
Now we are ready to state and prove our first main result.  We recall and fix some notations.  Let $Y$ be a K3 surface of Picard rank $1$ and degree $d = 2\delta$.  Let ${\Gscr = \langle T_S \ | S\in \Sph(Y) \rangle \subset \Aut \Dscr^b(Y)}$ be the group generated by all spherical twists and let $G\subset \Aut^+ \Nscr(Y) /\{\pm 1\}$ be the corresponding group generated by all spherical reflections.  In Remark \ref{rem:pm1} it will be shown that $\Aut^+ \Nscr(Y) /\{\pm 1\} \cong \Gamma^+_0(n)$.  The group $G$ acts on the roots $\Delta(Y)$.  Denote the set of orbits $\Delta(Y)/G$ by $I$.  Recall also from Corollary \ref{cor:kawatani} that we have an isomorphism $p: \PP( \Nscr(Y)_\RR)_{<0} \to \HH$ (see \eqref{eq:inverse} for the definition).  Finally,  note that since $G \subset \Aut^+ \Nscr(Y) /\{\pm 1\} \cong  \Gamma^+_0(n)$ is a normal subgroup,  one can use Lemma \ref{lem:closure} to construct a hyperbolically convex fundamental domain $\Fscr$ for $G$ containing $\infty$ in its closure in $\overline{\HH}$. 

\begin{theorem}\label{thm:sphtwistfree} Let $Y$ be a K3 surface of
  Picard rank $1$ and degree $d = 2\delta$.  Let
  $\Gscr$ be the group generated by all spherical
  twists and $G \subset \Aut^+ \Nscr(Y) /\{\pm 1\}$ the
  corresponding group of spherical reflections.  Let $\{\rho_i\}_{i \in I}$ with $\rho_i\in\Delta^+(Y)$ be a set of orbit representatives,  up to sign,  for $G$ acting on the roots.  Assume that the set $\{ x_i = p(\bar{\rho_i}) \}_{i \in I} \in \HH$ is contained in a convex fundamental domain $\Fscr$ for $G$, constructed in Lemma \ref{lem:closure}.   Then $\Gscr$ is freely generated by
\[
\langle T_{E_i} \ | \ v(E_i) = \rho_i\in\Delta^+(Y); i\in I \rangle, 
\] where $E_i$ is the vector bundle with $v(E_i)= \rho_i$. \end{theorem} 
\begin{proof} 
To simplify the discussion we will identify $\PP(\Nscr(Y)_\RR)_{<0}$ and $\HH$. E.g.\ when we speak about ``roots'' in $\HH$ we mean points of the form 
$p(\bar{\rho})$
where $\rho\in \Delta^+(Y)$. As usual $\HH^0$ is the complement of $\Sscr:=\{p(\bar{\rho})\mid \rho\in \Delta^+(Y)\}$.  
Recall that by Lemma \ref{lem:fuchs}\eqref{it:fuchs:it1}, the points of $\Sscr$ are order two ramification points for $\HH\r\HH/G$ and furthermore 
$G$ acts freely on $\HH^0$ so $[\HH^0/G]=\HH^0/G$.
By Lemma \ref{lem:generation} it is sufficient to prove the theorem with $\Gscr$ replaced by $\bar{\Gscr}$, the image of $\Gscr$ in $\Aut \Dscr^b(Y)/N$. 
By Theorem \ref{thm:kawatani} $\bar{\Gscr}$ contains $\Aut^0 \Dscr^b(Y)/[2]$.
By Lemma \ref{lem:tautological2} the exact sequence
\begin{equation}
\label{eq:tautological3}
 1\r \Aut^0\Dscr^b(Y)/[2]\r  \bar{\Gscr}\r G\r 1
\end{equation}
is obtained from the long exact sequence of homotopy groups associated with  ${\HH^0\r \HH^0/G}$. I.e.  after choosing a base point $y\in \HH^0$ with image $x\in \HH^0/G$,
\eqref{eq:tautological3} may be identified with
\begin{equation}
\label{eq:tautological4}
 1\r \pi_1(\HH^0,y)\xrightarrow{\varphi}  \pi_1(\HH^0/G,x)\r G\r 1
\end{equation}
Let $\{\rho_i\}_{i \in I} \in \HH$ be orbit representatives for the roots contained in $\Fscr$. Note that all $\rho_i$ are automatically on the boundary of $\Fscr$ and from the construction of $\Fscr$ it is clear that the interior of $\Fscr$ does not intersect the vertical line segments discussed in Remark \ref{rem:kawatani}.  Choose the base point $y$ in the interior of $\Fscr$.  Then as explained in Remark \ref{rem:topreal} $\pi_1(\HH^0/G,x)$ is the free group freely generated by loops $(\tau_{\overline{\gamma}_i})_i$ around the punctures corresponding to the images 
$(\overline{\gamma}_i)_i$ of a system of non-intersecting paths $(\gamma_i)_i$ in 
$\Fscr$ connecting $y$ to $\rho_i$. 
Recall from \eqref{eq:ram} 
that we have
$
\varphi(\tau_{\gamma_i})=\tau^2_{\overline{\gamma}_i}
$.
We will now assume that the paths $\gamma_i$ have been chosen as in Remark \ref{rem:kawatani}. Then under the identification of \eqref{eq:tautological3} and \eqref{eq:tautological4}
$\tau_{\gamma_i}$ corresponds to $T_{E_{i}}^2$ considered as an element of $\Aut^0 \Dscr^b(Y)/[2]$ (see Theorem \ref{thm:kawatani}). 
It then follows  that $\tau^2_{\overline{\gamma}_i}$ corresponds to $T_{E_i}^2$ considered as an element of $\Aut \Dscr^b(Y)/N$. Since square roots are unique in free groups 
we obtain that  $\tau_{\overline{\gamma}_i}$ corresponds to $T_{E_i}$.
\end{proof} 

By Theorem \ref{thm:sphtwistfree}  the group $\Gscr$ is finitely generated if and only if the corresponding group $G$ of spherical reflections is finitely generated.  Combining this with Lemma \ref{lm:fingendeg}, we obtain: 
\begin{theorem}\label{thm:fingendeg} Let $Y$ be a K3 surface of
  Picard rank $1$ and degree $d = 2\delta$.  Let
  $\Gscr$ be the group generated by all spherical
  twists. Then $\Gscr$ is finitely generated if and only if $\delta \in \{1,2,3,5,7,11,17,19,23,29,31,41,47,59,71\}$.  \end{theorem}

\section{Examples}\label{sec:examples}

In this section we explicitly describe the subgroup $\Gscr \subset 
\Aut \Dscr^b(Y)$  generated by spherical twists for some specific examples of K3 surfaces with Picard rank $1$.
The first four examples we consider are very general anticanonical divisors $Y$ in Fano threefolds $X$ of Picard rank $1$ possessing full exceptional collections of vector bundles, namely,  $X$ is one of the four types $\PP^3, Q_3, V_5$ and $V_{22}$ (see Lemma \ref{lm:4Fanos}).  By a theorem due to Moishezon \cite[Theorem 7.5]{Moishezon} (see also \cite[Th{\'e}or{\`e}me 15.33]{Voisin})
such a K3 surface also has Picard rank $1$.  
In each of these cases we show that $\Gscr$  is the free group $F_4$ generated by $T_{E_1|_Y},  \dots, T_{E_4|_Y}$ for a fixed exceptional collection $\Fscr = (E_1, \dots, E_4)$ on the ambient Fano threefold $X$.  A similar result is established for the case of a K3 surface of degree $2$ arising as a double cover of $\PP^2$.  Finally,  we give an example of a K3 surface (of degree 8) for which the group $\Gscr$ is not finitely generated but we still list explicit generators.  

\medskip

As before,  let $G \subset \Aut^+ \H^*(Y,  \ZZ)$ is the group generated by spherical reflections, i.e.  point reflections with respect to the roots.  By Theorem \ref{thm:sphtwistfree},  to find a set of free generators of $\Gscr$ in each of the cases it is sufficient to find a set of orbit representatives of the fixed points in $\HH$ under the action of $G$ such that these representatives are contained in an appropriate fundamental domain (the ``Ford domain'' which we have constructed in Lemma \ref{lem:closure}).  

We obtain such a fundamental domain explicitly from a fundamental domain for the corresponding group $\Gamma_ 0^+(\delta)$ using Lemma \ref{lem:coset}.  In turn,  since in all of our cases $\Gamma_0^+(\delta) = F(\delta)$ is the Fricke group (because $\delta$ is prime),  constructing a fundamental domain for $\Gamma_ 0^+(\delta)$ amounts to finding a fundamental domain for $\Gamma_0(\delta)$ invariant under the Fricke involution and cutting it along the half-circle with radius $1/\sqrt{\delta}$ centred at $0$ (the Fricke involution interchanges the two half-planes bounded by this half-circle).  These fundamental domains for $\Gamma_ 0^+(\delta)$ can be also found in e.g. \cite{BayerTravesa},  \cite{Shigezumi}. 

 Below we give a brief summary of the results and then discuss each example in detail.  

\begin{theorem}\label{thm:examples} \begin{enumerate}
\item  \label{thm:examples:it1} Let $X$ be a Fano threefold with Picard rank $1$ and $h^{2,1} = 0$,  that is, ${X = \PP^3, Q_3, V_5}$ or $V_{22}$ (the constructions are reviewed below).  Let $Y \subset X$ be a very general anticanonical divisor which is a K3 surface of Picard rank $1$ and degree $4,6,10$ and $22$ respectively.  Then the group 
\[
\Gscr = \langle T_{S} \ | S \in \Sph(Y) \rangle \subset \Aut \Dscr^b(Y)
\]
generated by all spherical twists is isomorphic to the free group $F_4$ on $4$ generators.  The spherical twists along the spherical vector bundles obtained by restricting the following full exceptional collection $\Escr$ of vector bundles on $X$ are free generators of $\Gscr$: 
 \begin{enumerate}
 \item For $X = \PP^2$ 
 \[
 \Escr = (\Oscr, \Oscr(1),\Oscr(2), \Oscr(3)).
 \]
 \item For $X = Q_3$
 \[
 \Escr = (\Oscr, \Sscr^*,\Oscr(1), \Oscr(2)),
 \]
where $\Sscr$ is the spinor bundle.
 \item For $X = V_5$
 \[
 \Escr = (\Uscr_X, \Oscr,\Uscr^*_X,\Oscr(1)), 
 \]
 where $\Uscr_X$ is the restriction of the universal bundle on $\Gr(2,5)$. 
 \item For $X = V_{22}$ 
 \[
 \Escr = (\Oscr, \Uscr^*_X,E^*, \Lambda^2 \Uscr^*_X ),
\]
where $\Uscr_X$ is the restriction of the universal bundle on $\Gr(2,5)$ and $E$ is a Mukai bundle of rank $2$ (constructed in \cite{KuznetsovV22},  \cite{BKM}). 
 \end{enumerate}
\item Let $Y$ be a K3 surface of degree $2$ which is a double cover of $X=\PP^2$ branched along a sextic curve.  Then the group $\Gscr \cong F_3$ is freely generated by the spherical twists along the pullbacks of the $3$ lines bundles $\Oscr_X, \Oscr_X(1), \Oscr_X(2)$ forming a full exceptional collection on $X$.  
\item Let $Y$ be a K3 surface of Picard rank $1$ and degree $8$ (e.g. $Y \subset X$ a very general anticanonical section for $X$ be a complete intersection of $3$ quadrics in $\PP^6$). Then $\Gscr$ is freely generated by $T_{\Oscr_Y(k)}$ for all $k \in \ZZ$ (in particular, it is infinitely generated). 
\end{enumerate}
\end{theorem}
\subsection{$Y \subset \PP^3$ (degree 4)}
Let $X = \PP^3$.  Then a very general anticanonical divisor $Y \subset X$ is a smooth quartic K3 surface.   A standard full exceptional collection on $X$ consisting of line bundles is due to Beilinson \cite{Beilinson}: 
\[
\Escr = (\Oscr, \Oscr(1),\Oscr(2), \Oscr(3))
\]
The quadratic form corresponding to the Mukai pairing on $\Nscr(Y)$ is $q(r,l,s) = 2l^2 - rs$. The spherical vector bundles $\Oscr(i)|_Y$ on $Y$ have the following classes in $\Nscr(Y)$: 
\[
 v(\Oscr|_Y) = (1,0,1),  \ v(\Oscr(1)|_Y) = (1,1,3),  \ v(\Oscr(2)|_Y) = (1,2,9), \ v(\Oscr(3)|_Y) = (1,3,19)
\]
Using the formula in Corollary \ref{cor:root},  one finds that the corresponding $4$ points in $\HH$ are 
\[
\frac{i}{\sqrt{2}} , 1+\frac{i}{\sqrt{2}},2+\frac{i}{\sqrt{2}}, 3+\frac{i}{\sqrt{2}}
\]
The group $\Gamma_0^+(2) \cong \SO^+(q, \ZZ)$ has one elliptic point of order $2$,  one elliptic point of order $4$ and one cusp (\cite[Table 4]{ChuaLang}).  The curve $X_0^+(2)$ has genus 0.

The elliptic elements of $\Gamma_0^+(2)$ can be understood in the following way.   Let $s_0 \in \Gamma_0^+(2)$ be the transformation corresponding to $T_{\Oscr_Y} \in  \Aut \Dscr^b(Y)$, namely 
\[
s_0 =\begin{pmatrix} 0 & -1 \\ 2 & 0\end{pmatrix}
\]
\[
s_0: z \mapsto \frac{-1}{2z}
\] The element $s_0 = f_2$ is the Fricke involution generating an elliptic subgroup of order $2$ in $\Gamma_0^+(2)$.  Its fixed point in $\HH$ is $\frac{i}{\sqrt{2}}$.

Now let $t \in \Gamma_0^+(2)$ be the translation 
\[
t = \begin{pmatrix} 1 & 1 \\ 0 & 1\end{pmatrix}.
\]
\[
t: z \mapsto z+1
\] Observe that by Corollary \ref{cor:translation} $t$ corresponds to the autoequivalence 
\[
 - \otimes \Oscr_Y(1) \in \Aut \Dscr^b(Y). 
\]
We claim that $s_0t$ generates an elliptic subgroup of order $4$.  Indeed,  note that $s_k := t^ks_0t^{-k}$ corresponds to the spherical twist $T_{\Oscr_Y(k)}$.  Hence $(s_0t)^4 = s_0s_1s_2s_3t^{4} = 1$ by Proposition \ref{pr:compserre},  since $\omega_X \cong \Oscr_X(-4)$.  Note that up to even shifts this relation holds not only numerically,  but on the level of derived categories as well.  The subgroup generated by $s_0t$ is maximal since otherwise there would exist an elliptic point of higher order. 
By the discussion after Lemma \ref{lem:freeproduct} the subgroup $G \subseteq \Gamma_0^+(2)$ generated by reflections is the kernel 
\[
 1 \to G \to \Gamma_0^+(2) \cong \ZZ/2 \ZZ \ast  \ZZ/4 \ZZ \xrightarrow{s_0 \mapsto 0, (s_0t) \mapsto 1} \ZZ/4 \ZZ \to 1
\]
 In particular,  $[ \Gamma_0^+(2): G] = 4$ and it is easy to see that $G$ is the free product of $4$ copies of $\ZZ/2 \ZZ$,  generated by $s_0, ts_0t^{-1}, t^2s_0t^{-2}$ and $t^3s_0t^{-3}$.  These elements correspond to the spherical twists with respect to exactly the restrictions of the vector bundles of the exceptional collection $\Escr$.

 Figure \ref{fig:P3} depicts a fundamental domain for $G$ which contains the corresponding $4$ fixed points of these generators.  
\begin{figure}[h!]\label{fig:P3}
\caption{A fundamental domain for $G \subset \Gamma_0^+(2)$ containing ${\frac{i}{\sqrt{2}} , 1+\frac{i}{\sqrt{2}},2+\frac{i}{\sqrt{2}}}$ and $3+\frac{i}{\sqrt{2}}$. }
\centering
\includegraphics[width=0.7\textwidth]{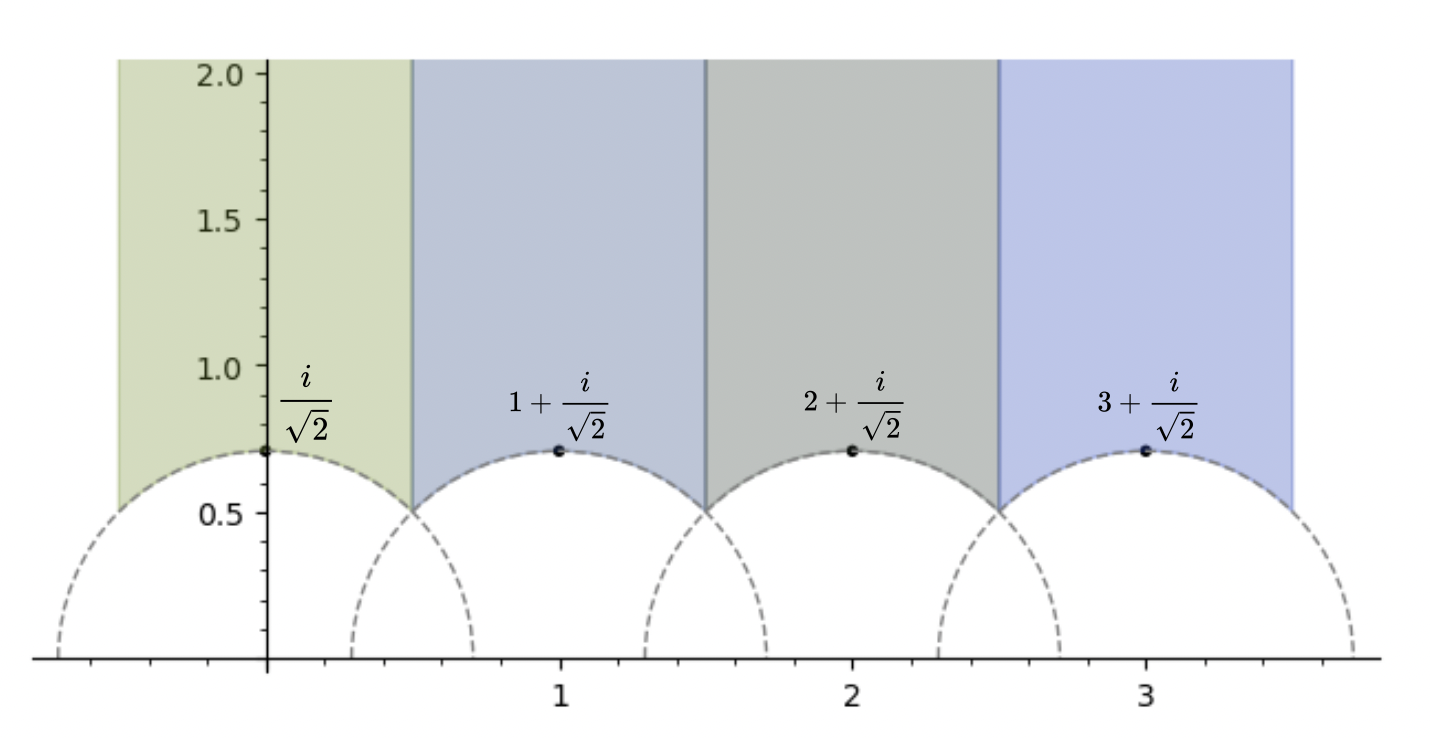}
\end{figure}
\subsection{ $Y \subset Q_3$ (degree 6)}
Let $X$ be a smooth quadric threefold.   Then a very general anticanonical divisor $Y \subset X$ is a K3 surface of degree $6$.   Kapranov \cite{Kapranov} constructed the following full exceptional collection of vector bundles on $Q_3$:
\[
\Escr = (\Oscr, \Sscr^*,\Oscr(1), \Oscr(2)),
\]
where $\Sscr$ is the spinor bundle.
The quadratic form corresponding to the Mukai pairing on $\Nscr(Y)$ is $q(r,l,s) = 3l^2 - rs$. The spherical vector bundles on $Y$ corresponding to the objects of $\Escr$ have the following classes in $\Nscr(Y)$: 
\[ v(\Oscr|_Y) = (1,0,1),  \ v(\Sscr^*|_Y) = (2,1,2),  \ v(\Oscr(1)|_Y) = (1,1,4), \ v(\Oscr(2)|_Y) = (1,2,13)
\]
Using the formula in Corollary \ref{cor:root},  one finds that the corresponding $4$ points in $\HH$ are 
\begin{align}\label{eq:Q3points}
\frac{i}{\sqrt{3}} , \frac{1}{2}+\frac{i}{2\sqrt{3}}, 1+\frac{i}{\sqrt{3}},  2+\frac{i}{\sqrt{3}}
\end{align}
The group $\Gamma_0^+(3) \cong \SO^+(q, \ZZ)$ has one elliptic point of order $2$,  one elliptic point of order $6$ and one cusp (\cite{ChuaLang},  Table 4).  The curve $X_0^+(3)$ has genus $0$.  As in the previous case,  let $s_0 \in \Gamma_0^+(3)$ be the transformation corresponding to $T_{\Oscr_Y} \in  \Aut \Dscr^b(Y)$ and $t \in  \Gamma_0^+(3)$ corresponding to $- \otimes \Oscr_Y(1) \in \Aut \Dscr^b(Y)$.  Then,  similarly to the previous example,  by Proposition \ref{pr:compserre} we have $(ts_0)^3 = s$, where $s$ denotes the transformation corresponding to $T_{\Sscr^*|_Y}$.  Hence $ts_0$ generates an elliptic subgroup of order $6$.  Same as in the previous example,  up to an even shift,  the relation $(\Oscr_Y(1) \otimes T_{\Oscr_Y}(-))^3 \cong T_{\Sscr^*|_Y}(-) \in \Aut \Dscr^b(Y)$ holds on the level of the derived category as well.  The subgroup $G$ generated by reflections has index $3$ in $\Gamma_0^+(3)$: 
\[
 1 \to G \to \Gamma_0^+(3) \cong \ZZ/2 \ZZ \ast  \ZZ/6 \ZZ \xrightarrow{s_0 \mapsto 0, (ts_0) \mapsto 1} \ZZ/3 \ZZ \to 1
\]
Hence $G \cong (\ZZ/2 \ZZ)^{\ast 4}$ is generated by $s_0, ts_0t^{-1}, t^2s_0t^{-2}, (ts_0)^3$ (we are listing the generators corresponding to the chosen roots).

Figure \ref{fig:Q3} depicts a fundamental domain of $G$ which contains the points (\ref{eq:Q3points}).  It is constructed by the same procedure as in the case of $Y \subset \PP^3$. 
\begin{figure}[h!]\label{fig:Q3}
\caption{A fundamental domain for $G \subset \Gamma_0^+(3)$ containing $\frac{i}{\sqrt{3}} , \frac{1}{2}+\frac{i}{2\sqrt{3}}, 1+\frac{i}{\sqrt{3}}$ and $2+\frac{i}{\sqrt{3}}$. }
\centering
\includegraphics[width=0.7\textwidth]{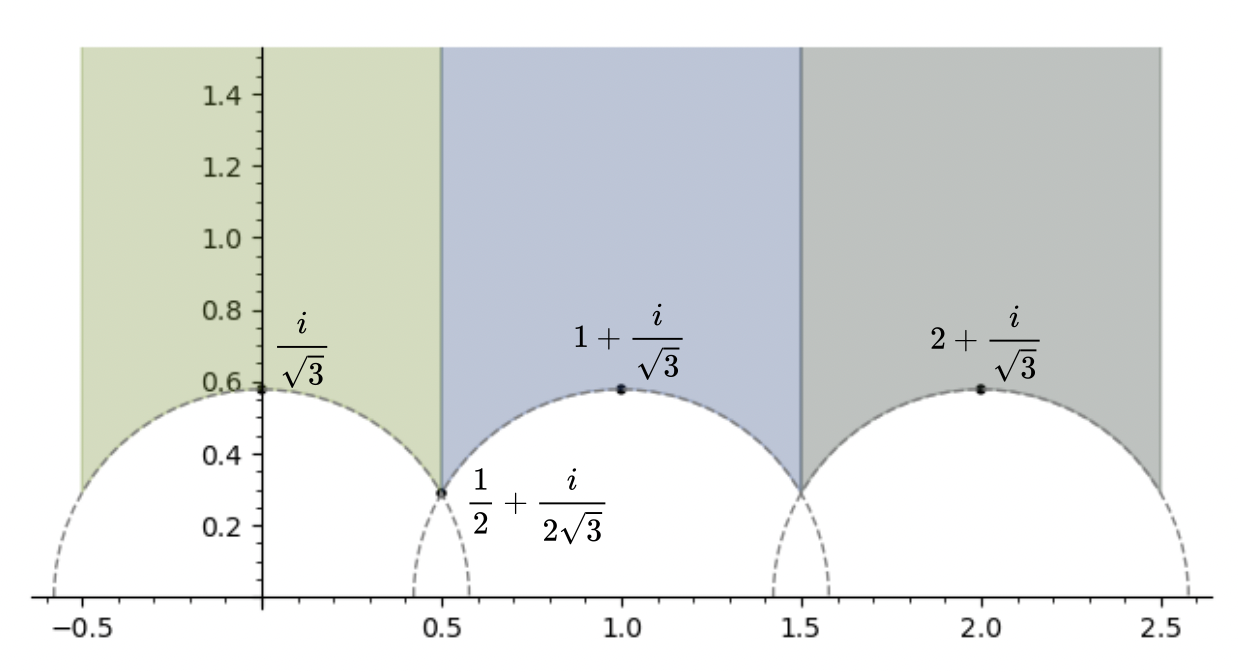}
\end{figure}
\subsection{ $Y \subset V_5$ (degree 10)}
Let $X$ be a section of the Pl{\"u}cker embedding of $\Gr(2,5)$ by a general subspace of codimension $3$.  Then $X$ is a Fano variety of type $V_5$ and a very general anticanonical divisor $Y \subset X$ is a K3 surface of degree $10$.  We consider a full exceptional collection on $X$ constructed by Orlov in \cite{OrlovV5}, namely, 
\[
\Escr = (\Oscr, \Qscr_X, \Uscr^*_X,\Oscr(1)),
\]
where $\Uscr_X$ and $\Qscr_X$ are the restrictions of the universal and the quotient bundles on $\Gr(2,5)$ respectively.  Applying the mutation $\sigma_1$ to $\Escr$ one obtains an exceptional collection of shifted vector bundles whose restrictions to $Y$ have the following classes in $\Nscr(Y)$: 
\begin{align}\label{eq:V5roots} (-2,1,-3), (1,0,1),(2,1,3),(1,1,6).
\end{align}
Since shifting a spherical object does not affect the corresponding spherical twist,  we can replace the first object $E_0'$ by $E_0'[1]$. This amounts to changing the sign of the first vector, making all ranks positive. 
And the $4$ points in $\HH$ are 
\begin{align}\label{eq:V5points}
\frac{-1}{2}+\frac{i}{2\sqrt{5}},  \frac{i}{\sqrt{5}} , \frac{1}{2}+\frac{i}{2\sqrt{5}}, 1+\frac{i}{\sqrt{5}}. 
\end{align}
The group $\Gamma_0^+(5) \cong \SO^+(q, \ZZ)$ has three elliptic points of order $2$ and one cusp (\cite[Table 4]{ChuaLang}).  The curve $X_0^+(5)$ has genus 0.  The elements $s_0, s_1 \in \Gamma_0^+(5)$ corresponding to $T_{\Oscr_Y}$ and $T_{\Qscr_Y}$ respectively generate two non-congruent elliptic subgroups of order $2$.  The remaining elliptic subgroup of order $2$ is generated by $s_0s_1t \in  \Gamma_0^+(5)$, where as usual $t$ corresponds to $\Oscr_Y(1) \otimes (-) \in \Aut \Dscr^b(Y)$.  This element is not a point reflection with respect to a root. \footnote{In fact, $s_0s_1t$ corresponds to the twist along the spherical functor $\langle \Oscr_X, \Qscr_X \rangle = \langle \Uscr_X,   \Oscr_X \rangle \hookrightarrow \Dscr^b(X) \to \Dscr^b(Y)$ (a \emph{rotation functor}, see \cite{KuznetsovPerry}).  We thank Alexander Kuznetsov for pointing this out to us. } One can verify that $(s_0s_1t )^2 = 1$ holds on the level of derived categories as well,  namely,  up to an even shift the autoequivalence $ \psi(-) := T_{\Oscr_Y} T_{\Qscr_Y} ( - \otimes \Oscr_Y(1) )$ has order two.  Indeed,  it is easy to check that up to a shift $\Uscr^*|_Y = \Oscr_Y(1) \otimes T_{\Oscr_Y}(\Qscr_Y)$.  Using this expression to write $T_{\Uscr^*}$ in terms of $T_{\Oscr_Y}$, $T_{\Qscr_Y}$ and $\Oscr_Y(1) \otimes (-) $ and substituting to the formula in Proposition \ref{pr:compserre} with $\omega_X = \Oscr_X(-2)$,  we get that $\psi$ has order $2$ up to an even shift.  By the discussion after Lemma \ref{lem:freeproduct} the subgroup $G$ generated by reflections is the kernel
\[
 1 \to G \to \Gamma_0^+(5) \cong (\ZZ/2 \ZZ)^{\ast 3} \xrightarrow{s_0 \mapsto 0, s_1 \mapsto 0, (s_0s_1t) \mapsto 1} \ZZ/2 \ZZ \to 1. 
\]
One can see that $G \cong (\ZZ/2 \ZZ)^{\ast 4}$ is generated by $s_0, s_1, ts_0t^{-1}$ and $ts_1t^{-1}$ (the generators corresponding to the roots (\ref{eq:V5roots}) are $s_0, s_0s_1s_0, t s_0 t^{-1}$ and $t s_0 s_1 s_0 t^{-1}$).

Figure \ref{fig:V5} shows a fundamental domain for $G$ containing the four points (\ref{eq:V5points}).  
\begin{figure}[h!]\label{fig:V5}
\caption{A fundamental domain for $G \subseteq \Gamma_0^+(5)$ containing $\frac{-1}{2}+\frac{i}{2\sqrt{5}},  \frac{i}{\sqrt{5}} , \frac{1}{2}+\frac{i}{2\sqrt{5}}$ and $1+\frac{i}{\sqrt{5}}$. }
\centering
\includegraphics[width=0.7\textwidth]{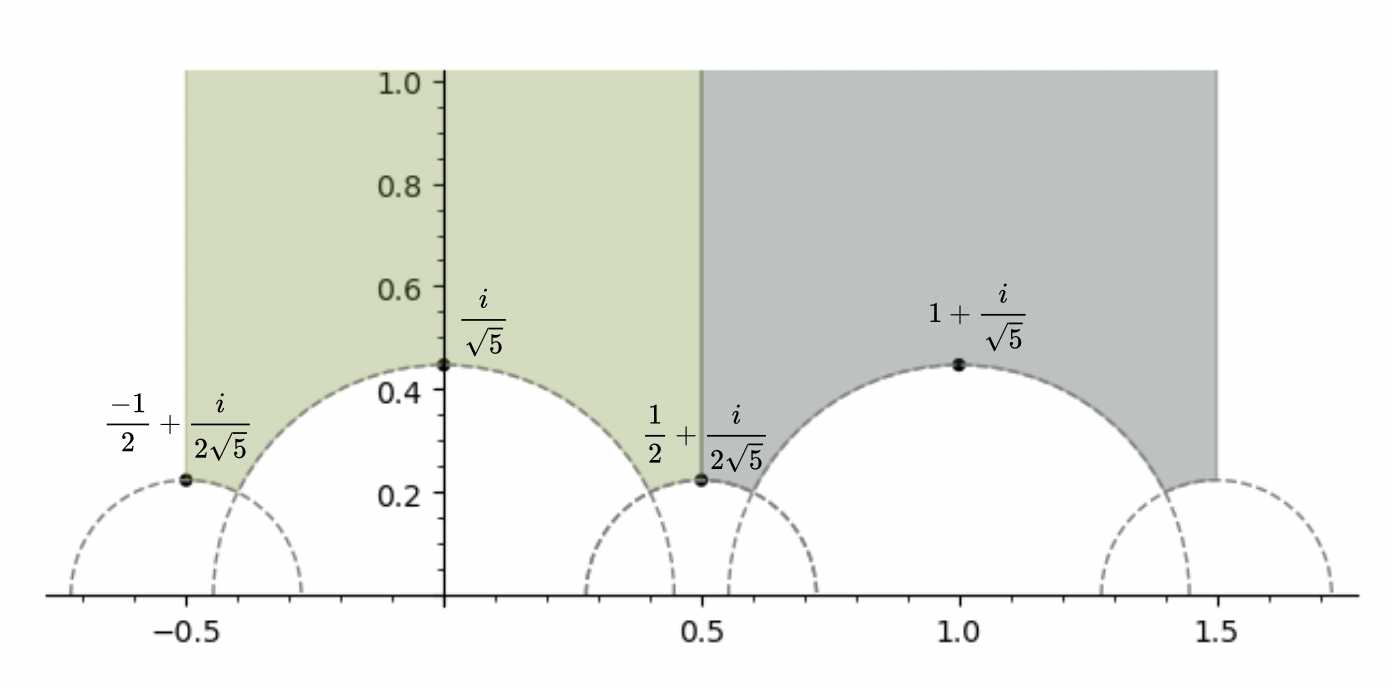}
\end{figure}
\subsection{ $Y \subset V_{22}$ (degree 22)} 
Let $X$ be the zero locus of a general section of $(\Lambda^2 \Uscr^*)^{\oplus 3}$ in $\Gr(3,7)$, where $\Uscr$ denotes the universal bundle.  $X$ a Fano variety of type $V_{22}$ and a very general anticanonical divisor $Y \subset X$ is a K3 surface of degree $22$.  A full exceptional collection of vector bundles on $X$ was constructed by Kuznetsov in \cite{KuznetsovV22} (see \cite[Theorem 2]{KuznetsovV22MPIM} for the proof of the fullness,  also see \cite[Theorem 4.1]{KuznetsovFano}):
\[
\Fscr = (\Oscr, \Uscr^*_X,E^*, \Lambda^2 \Uscr^*_X ). 
\]
Here $E$ is a vector bundle of rank $2$,  for the construction see e.g. \cite{KuznetsovV22}, \cite{BKM}.  Applying a sequence of $3$ mutations,  we obtain another exceptional collection, which automatically consists of shifted vector bundles by Proposition \ref{prop:sheavesremain}: 
\[
\Fscr' = \sigma_1 \sigma_2 \sigma_3 \Fscr = (L_{\Oscr} L_{\Uscr^*_X}L_{E^*} \Lambda^2 \Uscr^*_X,\Oscr, \Uscr^*_X,E^*),
\]
where $L_{\Oscr} L_{\Uscr^*_X}L_{E^*} \Lambda^2 \Uscr^*_X = \Uscr_X$ up to a shift.  One checks that the corresponding spherical objects on $Y$ have the following classes in $\Nscr(Y)$: 
\[
 (-3,1,-4), \, (1,0,1), (3,1,4), (2,1,6)
\]
As in the previous example,  we are free to change the sign of the first vector,  making all ranks positive. 
The corresponding $4$ points in $\HH$ are 
\[
\frac{-1}{3}+\frac{i}{3\sqrt{11}}, \frac{i}{\sqrt{11}},  \frac{1}{3}+\frac{i}{3\sqrt{11}}, \frac{1}{2}+\frac{i}{2\sqrt{11}}.
\]
The group $\Gamma_0^+(11) \cong \SO^+(q, \ZZ)$ has $4$ elliptic points of order $2$ and one cusp (\cite{ChuaLang},  Table 4).  The curve $X_0^+(11)$ has genus $0$.  Hence $G \cong \Gamma_0^+(11)$. 
\begin{figure}[h!]\label{fig:V22}
\caption{A fundamental domain for $G = \Gamma_0^+(11)$ containing $\frac{-1}{3}+\frac{i}{3\sqrt{11}}, \frac{i}{\sqrt{11}},  \frac{1}{3}+\frac{i}{3\sqrt{11}},$ and $\frac{1}{2}+\frac{i}{2\sqrt{11}}$. \\ }
\centering
\includegraphics[width=0.7\textwidth]{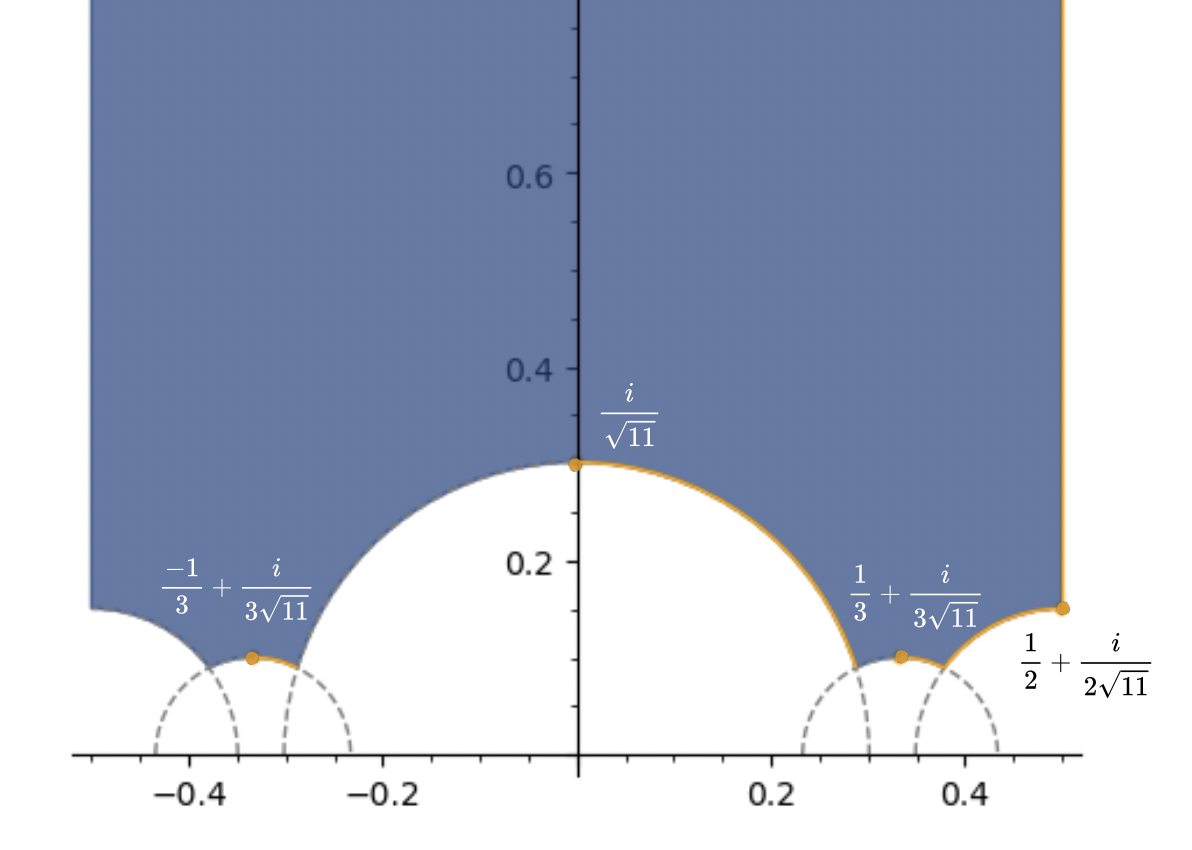}
\end{figure}

\subsection{Double cover of $\PP^2$ (degree 2)}
Let $\pi: Y \to \PP^2$ be a very general double covering branched along a sextic curve.  Then $Y$ is a K3 surface of degree $2$.  Pulling back the standard exceptional collection $(\Oscr,\Oscr(1), \Oscr(2))$ from $\PP^2$ to $Y$,  we obtain a collection of $3$ spherical vector bundles with the following classes in $\Nscr(Y)$: 
\[
(1,0,1), (1,1, 2), (1,2,5)
\]
The corresponding points in $\HH$ are $i,1+i$ and $2+i$.  The group $\Gamma_0^+(1) \cong \PSL_2(\ZZ)$ has one elliptic point of order $2$,  one elliptic point of order $3$ and one cusp.  The curve $X_0^+(1)$ has genus $0$. As before,  the element $s_0$ corresponding to $T_{\Oscr_Y}$ generates an elliptic subgroup of order $2$.  By Remark \ref{rem:doublecover} the element $s_0t$,  where $t \in  \Gamma_0^+(1)$ corresponds to $ - \otimes \Oscr_Y(1) \in \Aut \Dscr^b(Y)$,  generates an elliptic subgroup of order $3$. 

By the discussion after Lemma \ref{lem:freeproduct} the group $G$ generated by reflections is the kernel 
\[
 1 \to G \to \Gamma_0^+(1) \cong \ZZ/2 \ZZ \ast  \ZZ/3 \ZZ \xrightarrow{s_0 \mapsto 0, t^3 \mapsto 0} \ZZ/3 \ZZ \to 1
\]
One observes that $[\Gamma_0^+(1) : G] = 3$ and $G \cong \ZZ/2 \ZZ \ast  \ZZ/2 \ZZ \ast \ZZ/2 \ZZ$, generated by $s_0$, $ts_0t^{-1}$ and $t^2s_0t^{-2}$.  These elements correspond to spherical twists with respect to the pullbacks of the bundles forming the standard exceptional collection $(\Oscr_X, \Oscr_X(1), \Oscr_X(2))$ on $X = \PP^2$.  Figure \ref{fig:doublecover} shows a fundamental domain for $G$ containing the corresponding $3$ fixed points.

\begin{figure}[h!]\label{fig:doublecover} 
\caption{A fundamental domain for $G \subset \Gamma_0^+(1) \cong \PSL_2(\ZZ)$ containing $i, 1+i$ and $2+i$.}
\centering
\includegraphics[width=0.5\textwidth]{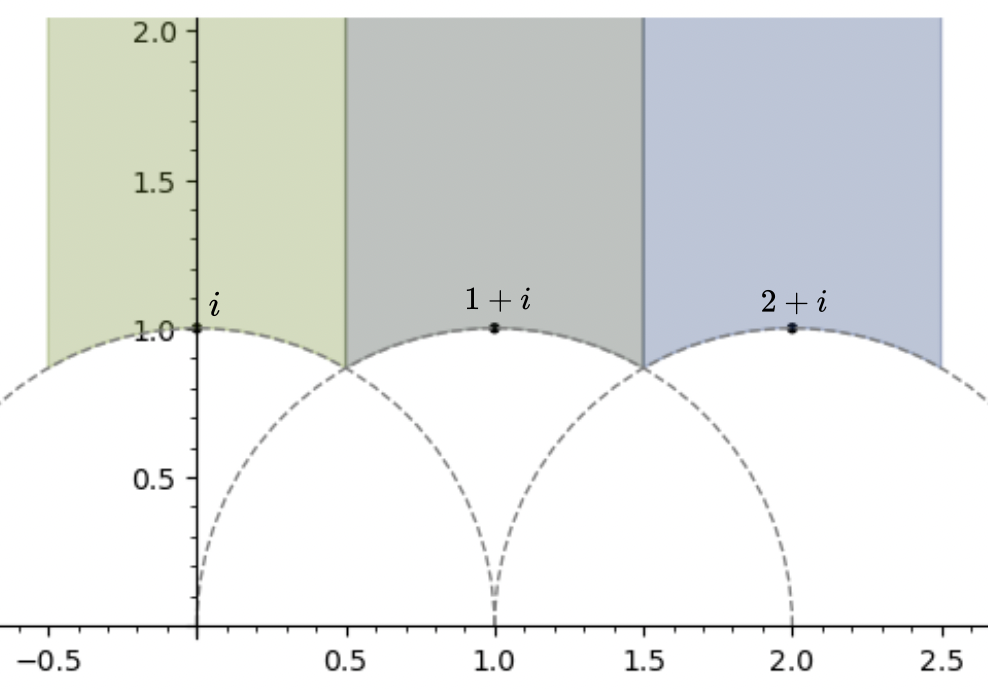}
\end{figure}

\subsection{An example of an infinitely generated group of spherical twists (degree 8)} 
Let $X$ be a general complete intersection of $3$ quadrics in $\PP^6$.  Then $X$ is a Fano variety of Picard rank $1$,  index $1$ and degree $8$.  A very general anticanonical section $Y \subset X$ is a K3 of degree $8$.  The quadratic form corresponding the the Mukai pairing on $\Nscr(Y)$ is $q(r,l,s) = 4l^2-rs$.  By Theorem \ref{thm:fingendeg} the group $\Gscr$ generated by all spherical twists is infinitely generated.  Indeed,  the group $\Gamma_0^+(4)$ has $1$ elliptic point of order $2$ and,  as in the previous examples,  the curve $X_0^+(4)$ has genus $0$.  However,  $\Gamma_0^+(4)$ has \emph{two} cusps (see \cite[Table 4]{ChuaLang}).  Thus, as explained in Corollary \ref{cor:finitegen}, the subgroup $G \subset \Gamma_0^+(4)$ generated by reflections has infinite index and is therefore infinitely generated.  As before,  the element $s_0 \in \Gamma_0^+(4)$ corresponding to $T_{\Oscr_Y}$ generates an elliptic subgroup of order $2$.  Its fixed point is $\frac{i}{2}$ and by the discussion after Lemma \ref{lem:freeproduct} the group $G = \ast_{n \in \ZZ} \ZZ/ 2\ZZ $ is the kernel 
\[
1 \to G \to \ZZ/ 2\ZZ \ast \ZZ \xrightarrow{s_0 \mapsto 1} \ZZ  \to 1
\]
$G$ is generated by the conjugates $t^k s_0 t^{-k}$,  $k \in \ZZ$ corresponding to the spherical twists $T_{\Oscr(k)_Y} \in \Aut \Dscr^b(Y)$. 

 Figure \ref{fig:deg8} shows a fundamental domain of $G$ containing the corresponding fixed points $\{k + \frac{i}{2} \}_{k \in \ZZ}$. 
 We conclude that $\Gscr \cong \langle T_{\Oscr_Y(k)} \ | \ k \in \ZZ \rangle $ is freely generated by the spherical twists with respect to all line bundles.  

\begin{figure}[h!]\label{fig:deg8}
\caption{A fundamental domain for $G \subset \Gamma_0^+(4)$ containing the fixed points $\{k + \frac{i}{2} \}_{k \in \ZZ}$ \\ }
\centering
\includegraphics[width=0.7\textwidth]{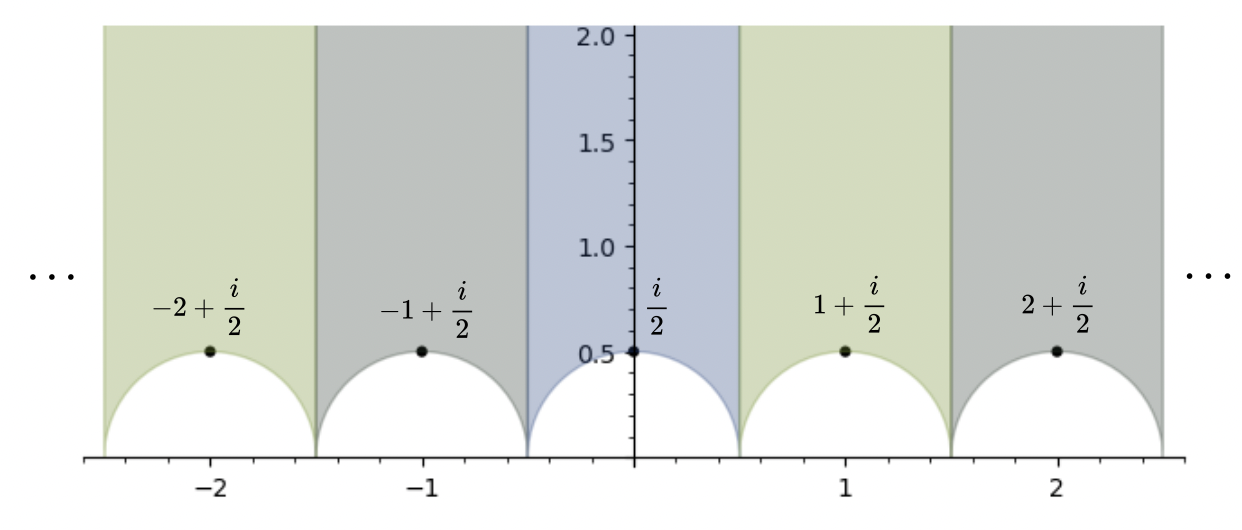}
\end{figure}

\begin{remark} In other examples considered in this section the associated Fano variety had a full exceptional collection.  This is not the case for a complete intersection $X$ of $3$ quadrics in $ \PP^6$,  since $h^{2,1}(X) = 14 \neq 0$ (see \cite[\S 12.2]{IskovskikhProkhorov}),  so the Hodge diamond of $X$ is not diagonal.  However,  the derived category of $X$ is known to have a non-trivial semiorthogonal decomposition $\Dscr^b(X) = \langle \Oscr_X,  \Dscr^b(\coh \Bscr ) \rangle$,  where $\Bscr$ is the even part of the sheaf of Clifford algebras (\cite[Theorem 6.1]{BondalOrlov}, see \cite{Kuznetsov08} for the proof).  \end{remark}
\begin{remark} The group $\Gscr$ can be finitely generated even if the K3 surface in question is an anticanonical section of a Fano threefold without a full exceptional collection.  For instance,  let $X$ be a general section of the Pl{\"u}cker embedding of $\Gr(2,6)$ by a subspace of codimension $5$.  Then $X$ is a Fano threefold of Picard rank $1$,  index $1$ and degree $14$ and a very general anticanonical section $Y \subset X$ is a K3 surface of degree $14$.  By Theorem \ref{thm:fingendeg} for such $Y$ the group $\Gscr$ is finitely
generated.  On the other hand,  $X$ has no full exceptional collection since its Hodge diamond is not diagonal ($h^{2,1} = 5$,  see \cite[\S 12.2]{IskovskikhProkhorov}). \end{remark}

\section{Exceptional collections on Fano threefolds}

\begin{theorem}[Polishchuk, {\cite[Theorem 1.2]{Polishchuk}}]\label{thm:AP} Let $X$ be a Fano threefold with Picard rank $1$ and very ample anticanonical class.  Let $E, E'$ be exceptional vector bundles on $X$ with the same restriction to a smooth anticanonical divisor $Y$.  Assume that $\Ext^1(E,E\otimes \omega_X^{-1}) = 0$. Then $E \cong E'$. 
\end{theorem}
\begin{remark} Theorem 1.2 in the actual text \cite{Polishchuk} assumes that $E$ and $E'$ have same class in $K_0(X)$ rather than the same restriction.  However, examining Polishchuk's proof one can see that the first step is to derive $E_Y = E'_Y$ from this condition and afterwards the fact that $E$ and $E'$ have the same class in $K_0(X)$ is not used.  \end{remark} 
\begin{remark}\label{rem:ext1zero} As observed in \cite{Polishchuk},  the condition $\Ext^1(E,E\otimes \omega_X^{-1}) = 0$ is satisfied in particular when $E$ is included in a full exceptional collection of vector bundles on a Fano threefold of Picard rank $1$,  
because on a variety $X$ with $\rk K_0(X) = \dim X + 1$ every full exceptional collection extends to a geometric helix (\cite[Proposition 3.3]{BondalPolishchuk}). \end{remark} 

\begin{lemma}\label{lm:restr} Let $X$ be a Fano threefold of Picard rank $1$ and $i: Y \hookrightarrow X$ a very general anticanonical divisor.  Let $E$ be an exceptional object and $F$ an exceptional vector bundle in $\Dscr^b(X)$.  Assume $T_{\LL i^* E} \cong T_{i^* F}$.  Then $E$ is a shifted vector bundle. 
\end{lemma} 

\begin{proof}
By Lemma \ref{lem:sphericalshift} we have $\LL i^* E \cong \LL i^* F[n]$.  By shifting $E$ we may assume without loss of generality assume that $n=0$.  Having done this it remains to show that $E$ has one nonzero cohomology sheaf in degree $0$, because exceptional sheaves on $X$ are vector bundles (\cite[Theorem 2.2]{Polishchuk}).  Consider the distinguished triangle 
\begin{equation}\label{eq:triangle_restiction}
E(-Y) \to E \to  i_*\LL i^* E \to 
\end{equation}

Suppose $\Hscr^l(E) \neq 0$ for $l \neq 0$.  We have
$\Hscr^l(i_*\LL i^* E) = 0$, hence the long exact sequence associated to
(\ref{eq:triangle_restiction}) implies that the cokernel of
${\Hscr^l(E(-Y)) = \Hscr^l(E)(-Y) \to \Hscr^l(E)}$ vanishes.  On the
other hand, this cokernel is isomorphic to $\Hscr^l(E)|_Y$.  Since $Y$ is an ample divisor this implies that $\Hscr^l(E)$ must have finite support.
Hence, it suffices to show that a non-zero cohomology sheaf of an exceptional object on $X$ cannot have zero-dimensional support.  Suppose
to the contrary that it does.  For every point of $p \in X$ there
exists a very general anticanonical divisor passing through $p$, so we can
pick another anticanonical divisor $i': Y' \hookrightarrow X$ such that $\Hscr^l(E)|_{Y'} \neq 0$. By Nakayama's lemma $\Hscr^l(E)(-Y')\r \Hscr^l(E)$ cannot be surjective.  Hence,  if $\Hscr^l(E)$ has zero-dimensional torsion,  then so does $\Hscr^l(i'_*\LL i'^* E)$.
But the later is impossible,  because a spherical object on any smooth projective surface has rigid cohomology sheaves (this is
well known, see e.g.  \cite[Proposition 3.5]{IshiiUehara}) and a rigid sheaf cannot have zero-dimensional torsion.\footnote{It is easy to see that a zero-dimensional sheaf $\Tscr$ on a surface cannot be rigid since $\chi(\Tscr,\Tscr) = 0$.  It then follows from Mukai's lemma (see \cite[Corollary 2.8]{Mukai87} or \cite[Lemma 2.2]{KuleshovOrlov} for the version which holds for any smooth projective surface) that a rigid sheaf cannot have zero-dimensional torsion.}
\end{proof}

\begin{lemma}\label{lm:freegrp_implies_transitive} Let $Y$ be a smooth projective variety and let $\Gscr=\langle T_S\mid S\in \Sph(Y)\rangle \subset \Aut \Dscr^b(Y)$ be the subgroup generated by all spherical twists.  Assume there are $(S_i)_{i\in I}\in \Sph(Y)$ is a (possibly infinite) set of spherical objects such that: 
\begin{enumerate}
\item \label{lm:freegrp_implies_transitive:it1} $\Gscr$ is freely generated by $(T_{S_i})_i$;
\item \label{lm:freegrp_implies_transitive:it2} for every $S\in \Sph(Y)$ there is some $H\in \Gscr$, $i\in I$ and $k\in \ZZ$ such that $S\cong H(S_i)[k]$.
\end{enumerate}
 Let
\[ 
\phi:= (T_{S^{(1)}}, \dots, T_{S^{(n)}}),\qquad \phi' := (T_{S^{(1')}}, \dots, T_{S^{(n')}})
\] be $n$-tuples of spherical twists with the same product 
\begin{equation} 
\label{eq:prodcondition}
T_{S^{(1)}} \cdots T_{S^{(n)}} = T_{S^{(1')}}\cdots T_{S^{(n')}}
\end{equation}
in $\Aut \Dscr^b(Y)$.
Then $\phi$ and $\phi'$ are in the same $B_n$-orbit under the Hurwitz action  (see \S\ref{subsec:hurwitz}) on $\Gscr^n$.
\end{lemma}

\begin{proof} Let $\Gscr_{ab} = \Gscr/[\Gscr,\Gscr]$ be the abelianisation of $\Gscr$. For $S\in \Sph(Y)$ we will denote by $\bar{S}$ the class of $T_S$ in $\Gscr_{ab}$.  
 Then by (\ref{lm:freegrp_implies_transitive:it1}) $\Gscr_{ab}$ is a free abelian group with a distinguished basis 
given by $(\bar{S}_i)_{i\in I}$.
By \eqref{lm:freegrp_implies_transitive:it2} and Lemma \ref{lm:twistconj} for every $S\in \Sph(Y)$ there exists $H \in \Gscr$ such that $T_S=HT_{S_i} H^{-1}$ and hence $\bar{S}=\bar{S}_i$ is an element of the distinguished basis.
We also deduce from \eqref{lm:freegrp_implies_transitive:it2} that if $\bar{S}=\bar{S}'$ then $T_{S}=JT_{S'}J^{-1}$ for some $J\in \Gscr$.

By \eqref{eq:prodcondition} one has 
\begin{align*}
\bar{S}^{(1)} + \cdots + \bar{S}^{(n)} = \bar{S}^{(1')} + \cdots + \bar{S}^{(n')} 
\end{align*} 

Since all $\bar{S}^{(i)}$ and $\bar{S}^{(i')}$ are in the
distinguished basis, we have $\bar{S}^{(i)} = \bar{S}^{(\tau(i)')}$
for some permutation $\tau$ of $\{1,\ldots,n\}$.  As explained above $\bar{S}^{(i)} = \bar{S}^{(\tau(i)')}$ 
implies 
$T_{S^{(i)}} = J_i T_{S^{(\tau(i)')}} J_i^{-1}$ for some $J_i\in \Gscr$ for every
$i \in \{1,\dots, n\}$.  It now follows from \eqref{lm:freegrp_implies_transitive:it1} and Theorem
\ref{thm:hurwitz_orbit} that $\phi$ and $\phi'$ are in the same Hurwitz orbit.\footnote{Note that the indexing set $I$ can be infinite,  but the whole Hurwitz orbit of $\phi$ lies in a power of a finitely generated free group,  so we are able to apply  Theorem \ref{thm:hurwitz_orbit} anyway.}
\end{proof}

Now we are ready to prove our second main result. 

\begin{theorem}\label{thm:main} Let $X$ be a threefold with a full exceptional collection of length four consisting of vector bundles.  Then the action of the semidirect product $B_4 \ltimes \ZZ^4$ by mutations and shifts on the set of full exceptional collections in $\Dscr^b(X)$ is transitive and free.
\end{theorem}

\begin{proof} 
Let $\Fscr = (F_1,F_2,F_3,F_4)$ be a
full exceptional collection of vector bundles on $X$
and let $\Escr = (E_1, E_2,E_3, E_4)$ be an arbitrary full exceptional collection in $\Dscr^b(X)$. Denote by $S_i := {F_i}|_Y,  S_i' := {E_i}|_Y$ the corresponding spherical objects in $\Dscr^b(Y)$, where $Y$ is a very general anticanonical divisor in $X$.  Proposition \ref{pr:compserre} implies that
 \begin{align*} T_{S_1}T_{S_2}T_{S_3}T_{S_4} = T_{S_1'}T_{S_2'}T_{S_3'}T_{S_4'}
 \end{align*}
 
 By Theorem \ref{thm:sphtwistfree} we are in the context of Lemma \ref{lm:freegrp_implies_transitive}.  Hence we conclude that there exists $\sigma \in B_4$ such that 
\begin{align*} 
\sigma \cdot \phi(\Escr) = \phi(\Fscr) = (T_{S_1}, T_{S_2}, T_{S_3}, T_{S_4})
\end{align*}
Applying the corresponding sequence of mutations $\sigma \in B_4$ to $\Escr$, we obtain an exceptional collection $\sigma \cdot \Escr = \Escr' = (E_1', \dots, E_4')$ such that $\phi(\Escr') = (T_{S_1}, T_{S_2}, T_{S_3}, T_{S_4})$.  From Lemma \ref{lm:restr} we deduce that each $E_i'$, $i=0,\dots, 3$ is a shift of a vector bundle.  Moreover,  $\Ext^1(F_i,F_i\otimes \omega_X^{-1}) = 0$ (and also $\Ext^1(E_i',E_i'\otimes \omega_X^{-1}) = 0$) by Remark \ref{rem:ext1zero},  hence applying Lemma \ref{lem:sphericalshift} and Theorem \ref{thm:AP} we obtain the desired transitivity on the set of full exceptional collections.

To prove the freeness it is sufficient to check that the stabiliser of $\Fscr$ is trivial up to shift under the $B_4$-action.  
Observe that the stabiliser up to shift of an exceptional collection $\Fscr$ under the mutation action is contained in the stabiliser of the corresponding tuple $\phi(\Fscr)$ under the Hurwitz action (note that $S\mapsto T_S$ kills the shift).  On the other hand,  by the transitivity proved above,  the full $B_4$-orbit of $\phi(\Fscr)$ under the Hurwitz action is contained in 
$\langle T_{S_1}, T_{S_2}, T_{S_3}, T_{S_4} \rangle^4 \subset \Gscr^4$.  Hence by Theorem \ref{thm:examples}\eqref{thm:examples:it1}
this action is in fact the Hurwitz action of $B_4$ on the power $F_4^4$ of the free group on $4$ generators.  It follows from Theorem \ref{thm:hurwitz_stab} that the stabiliser of $\phi(\Fscr)$ is indeed trivial.
  \end{proof} 

\begin{corollary}\label{cor:allsheaves} Let $X$ be a threefold with a full exceptional collection of length four consisting of vector bundles.  Then up to shifts every full exceptional collection in $\Dscr^b(X)$ is strong and consists of vector bundles. \end{corollary} 

\begin{proof} Since the property of being strong is preserved under mutations for any smooth projective variety $X$ with $\rk K_0(X) = \dim X+ 1$ (\cite{BondalPolishchuk},  Theorem 2.3),  every full exceptional collection is strong by Theorem \ref{thm:main}. By Proposition \ref{prop:sheavesremain} mutations of exceptional collections of shifted sheaves also consist of shifted sheaves.  On the other hand,  exceptional sheaves on $X$ are vector bundles by \cite[Theorem 2.2]{Polishchuk}.The conclusion again follows from the transitivity established in Theorem \ref{thm:main}.  \end{proof} 

\section{Appendix A: Description of the groups $\SO^+(q,\QQ)$ and $\SO^+(q,\ZZ)$}\label{sec:autN}

In the Appendix we give an independent proofs for the results of Dolgachev \cite{Dolgachev} and Kawatani \cite{Kawatani14} on precise descriptions of the groups $\Aut^+ \Nscr(Y)$ and $\Aut^+ \H^*(Y,\ZZ)$ for a K3 surface $Y$ of Picard rank $1$. 

Below we put
\[
\Lambda =\begin{pmatrix} \ZZ& \ZZ\\ \delta \ZZ & \ZZ\end{pmatrix} \subset M_2(\ZZ).
\]
The group $\Gamma_0(\delta)\subset \PSL_2^+(\QQ)$ introduced in \S\ref{sec:fuchsian}
is the image of $\Lambda^\ast \cap \{\det >0\}$ where for a ring
$R$ we let $R^\ast$ be its group of units.

\medskip

We now give a local characterization of the Atkin-Lehner elements also introduced in \S\ref{sec:fuchsian}.
For $p$ a prime we put
\[
\Lambda_p =\begin{pmatrix} \ZZ_p& \ZZ_p\\ \delta \ZZ_p & \ZZ_p\end{pmatrix} \subset M_2(\ZZ_p) \subset M_2(\QQ).
\]
where $\ZZ_p$ is the localization of $\ZZ$ at the prime ideal $(p)$. 

\medskip

\begin{lemma}
\label{lem:localal}
Let $x\in \GL_2^+(\QQ)$ and let $e$ be an exact divisor of $\delta$ (i.e.  $\gcd(e,\delta/e)=1$).  Then $x$ is an Atkin-Lehner element associated to
$e$ if and only if
\begin{enumerate}
\item For every prime divisor $p$ of $e$ we have
\[
x\in \begin{pmatrix} 0&-1\\ \delta &0\end{pmatrix}\Lambda^\ast_p.
\]
\item For other primes we have
\[
x\in \Lambda^\ast_p.
\]
\end{enumerate}
\end{lemma}
\begin{proof} Let us first prove the $\Leftarrow$ direction. We have
\[
\begin{pmatrix} 0&-1\\ \delta &0\end{pmatrix}\Lambda_p
=\begin{pmatrix}
\delta\ZZ_p & \ZZ_p\\
\delta\ZZ_p & \delta\ZZ_p
\end{pmatrix}
\]
and
\[
\left(\bigcap_{p\mid e} \delta \ZZ_p\right)\cap\left(\bigcap_{p\nmid e} \ZZ_p\right)
=e\ZZ
\]
so that the hypotheses imply
\[
x\in \begin{pmatrix}
e\ZZ & \ZZ\\
\delta \ZZ & e\ZZ
\end{pmatrix}.
\]
It remains to check that $\det x=e$, or equivalently 
\[
v_p(\det x)=
\begin{cases}
v_p(\delta )&\text{if $p\divides e$}\\
0&\text{otherwise}
\end{cases}
\]
This follows from the hypotheses.
The $\Rightarrow$-direction is similar.
\end{proof}
The following is the main result in this section.
\begin{proposition}[{\cite[Remark 7.2.2]{Dolgachev}}] \label{prop:SOisGamma+}
\label{prop:iso}
The isomorphism $\PSL_2(\RR)\cong \SO^+(q,\RR)$ given by $\overline{\phi}\mapsto \frac{\Phi}{\det\phi}$ (see \eqref{eq:isolie}) restricts to isomorphisms of groups
\begin{equation}
\label{eq:iso1}
\PGL_2^+(\QQ)\cong \SO^+(q,\QQ)
\end{equation}
\begin{equation}
\label{eq:iso2}
\Gamma^+_0(\delta)\cong \SO^+(q,\ZZ)
\end{equation}
\end{proposition}

We will use the symbol $\phi$ for
a $2\times 2$ matrix representing an element of $\PSL_2(\RR)$ and the symbol $\Phi$ for the corresponding $3\times 3$-matrix occurring in the formula \eqref{eq:phiformula}.
The $\subset$-inclusion in \eqref{eq:iso1} is obvious from \refeq{eq:phiformula} and the $\subset$-inclusion in \eqref{eq:iso2} is an immediate verification using the definition of $\Gamma^+_0(\delta)$ and \refeq{eq:phiformula}.

Hence our task is to prove the $\supset$-inclusions in \eqref{eq:iso1} and \eqref{eq:iso2}.

\medskip

Assume first $\Phi/\det\phi\in M_3(\QQ)$. In this case one quickly checks that 
all defined ratios among the entries of $\phi$ are in 
$\QQ$. Hence by replacing $\phi$ with a scalar multiple we may and we will assume
that $\phi\in M_2(\QQ)$.  From this we obtain \eqref{eq:iso1}. 

\medskip

It now remains to prove the $\supset$-inclusion in \eqref{eq:iso2}.
We do this locally for
each prime.
\begin{lemma}
\label{lem:localcase}
Assume $\phi \in M_2(\QQ)$ and $\Phi/\det\phi\in M_3(\ZZ_p)$ for some prime number~$p$. If $p\nmid \delta$ then, up to a power of $p$, one has $\phi\in \Lambda_p^\ast$.
If $p\divides \delta$ 
then, up to a power of $p$, one of the following alternatives holds
\begin{equation}
\label{eq:localcase}
\phi\in \Lambda_p^\ast \text{ or } \phi \in \begin{pmatrix} 0&-1\\ \delta & 0\end{pmatrix} \Lambda_p^\ast
\end{equation}
\end{lemma}
\begin{proof}
By multiplying $\phi$ with a general element of $\Lambda^\ast_p$ we may without loss of generality assume that all entries in $\phi$ are non-zero.
By further multiplying $\phi$ with a power of $p$ we may assume that $v_p(\det\phi)\in \{0,1\}$.
We consider the two cases separately.
\begin{enumerate}
\item $v_p(\det\phi)=0$.  In other words $\det\phi$ is a unit.
So we need to check when 
\[
\begin{pmatrix}
d^2 &2cd & c^2/\delta\\
bd& bc+ad& ac/\delta\\
b^2\delta &2ab\delta & a^2
\end{pmatrix}\in M_3(\ZZ_p)
\]
Looking at $\Phi_{11}$ and $\Phi_{33}$ this immediately gives us $a,d\in \ZZ_p$. 
Put  $k=v_p(\delta)$. If $k=0$ 
then looking at $\Phi_{13}$, $\Phi_{31}$ yields $\phi\in M_2(\ZZ_p)=\Lambda_p$. Since $\det\phi$ is a unit we obtain $\phi\in \Lambda_p^\ast$. Hence assume $k>0$.
By looking at $\Phi_{13}$ one obtains the condition $2v_p(c)\ge k$.

Since $ad-bc$ is a unit and $a,d\in \ZZ_p$ we
must have $v_p(b)+v_p(c)\ge 0$. Assume first $v_p(b)+v_p(c)>0$. 

Then $v_p(a)=v_p(d)=0$. Looking at $\Phi_{21}$ and $\Phi_{23}$ yields
$b\in \ZZ_p$, $c\in p^k\ZZ_p$. Hence in this case
\[
\phi\in \Lambda_p^\ast
\]
Now assume $v_p(b)+v_p(c)=0$. From $\Phi_{31}$ we get
$2v_p(b)+k\ge 0$. Then we obtain that $k$ is even and $v_p(c)=-v_p(b)=k/2$. We obtain
\[
\phi=\begin{pmatrix}
yp^{k/2}&v p^{-k/2}\\
w p^{k/2} & xp^{k/2}
\end{pmatrix}
\]
for $v,w\in \ZZ_p^\ast$ and $x,y\in \ZZ_p$ (the constraints on $\phi_{11}$  and $\phi_{22}$ come from $\Phi_{23}$ and $\Phi_{31}$). Hence we have
shown
\[
\phi\in p^{-k/2} \begin{pmatrix} 0&-1\\ \delta&0\end{pmatrix} 
\Lambda^\ast_p.
\]
\item $v_p(\det\phi)=1$. 
So we need to check when
\[
\begin{pmatrix}
d^2/p &2cd/p & c^2/(p\delta)\\
bd/p& (bc+ad)/p& ac/(p\delta)\\
b^2\delta/p &2ab\delta/p & a^2/p
\end{pmatrix}
\in M_3(\ZZ_p).
\]
Looking at $\Phi_{11}$ and $\Phi_{33}$ this immediately gives us $a,d\in p\ZZ_p$. Since ${v_p(ad-bc)=1}$, we get $v_p(b)+v_p(c)=1$. 

Looking at $\Phi_{13}$ and $\Phi_{31}$ we get 
\begin{align*}
2v_p(c)\ge k+1\\
2v_p(b)+k-1\ge 0
\end{align*}
We now get that $k=2m-1$ (in particular, $k\neq 0$ and hence $p\divides \delta$) and $v_p(c)=m$, $v_p(b)=1-m$.  So
\[
\phi=
\begin{pmatrix}
yp^m&vp^{-m+1}\\
wp^m&xp^m
\end{pmatrix}
\]
with $v,w$ units and $x,y\in \ZZ_p$ (as before,  the constraints on $\phi_{11}$ and
$\phi_{22}$ come from $\Phi_{23}$ and $\Phi_{21}$).
Hence
\[
\phi\in p^{-m+1} \begin{pmatrix}0&-1\\\delta &0 \end{pmatrix} \Lambda^\ast_p\qedhere
\]
\end{enumerate}
\end{proof}
The following corollary finishes the proof of the $\supset$-inclusion in \eqref{eq:iso2}. 
\begin{corollary}
If $\phi  \in \GL_2^+(\QQ)$ 
and $\Phi/\det \phi\in M_3(\ZZ)$ then $\phi$ represents an element of $\Gamma^+_0(\delta)$. 
\end{corollary}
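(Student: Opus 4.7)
The plan is to combine the local analysis provided by Lemma \ref{lem:localcase} with the local characterisation of Atkin--Lehner elements in Lemma \ref{lem:localal}. Since by Definition \ref{def:AL} the group $\Gamma^+_0(\delta)$ is precisely the image in $\PGL^+_2(\QQ)$ of the Atkin--Lehner elements, it will suffice to find a positive rational scalar $\lambda$ so that $\lambda\phi$ is an Atkin--Lehner element associated to some exact divisor $e$ of $\delta$.

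First I would apply Lemma \ref{lem:localcase} at every prime $p$ to obtain an integer $\epsilon_p \in \ZZ$ such that $p^{\epsilon_p}\phi$ lies either in $\Lambda_p^\ast$ or, when $p \mid \delta$, in the Fricke coset $\begin{pmatrix} 0 & -1 \\ \delta & 0 \end{pmatrix}\Lambda_p^\ast$. Because $\phi \in M_2(\QQ)$ has nonzero determinant, only finitely many primes divide the entries of $\phi$ or $\det\phi$; outside this finite set and outside the primes dividing $\delta$, one already has $\phi \in \Lambda_p^\ast$, so $\epsilon_p$ may be taken to be $0$. Hence $\lambda := \prod_p p^{\epsilon_p}$ is a well-defined positive rational number. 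Since $v_q(\lambda) = \epsilon_q$ for every prime $q$, the element $\lambda\phi$ differs from $p^{\epsilon_p}\phi$ at each prime $p$ by a $p$-adic unit scalar, so it still lies in the same canonical local form. Replacing $\phi$ by $\lambda\phi$, which does not change its class in $\PGL^+_2(\QQ)$, I may assume that at every prime $p$, $\phi$ itself lies in $\Lambda_p^\ast$ or, when $p \mid \delta$, possibly in the Fricke coset.

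Next, let $P$ denote the (finite) set of primes $p \mid \delta$ at which $\phi$ lies in the Fricke coset, and set $e := \prod_{p \in P} p^{v_p(\delta)}$. By construction $v_p(e) = v_p(\delta)$ for $p \in P$ and $v_p(e) = 0$ otherwise, whence $\gcd(e, \delta/e) = 1$, i.e.\ $e$ is an exact divisor of $\delta$. The local conditions on $\phi$ now match precisely those in Lemma \ref{lem:localal} for this $e$, so $\phi$ is an Atkin--Lehner element associated to $e$, and therefore represents an element of $\Gamma^+_0(\delta)$.

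The only nontrivial point is producing the global scalar $\lambda$: one must verify that $\epsilon_p = 0$ for all but finitely many primes $p$, which is forced by the finiteness of the prime factorisations of the entries of $\phi$ and of $\det\phi$. Everything else --- defining $e$, verifying its exactness, and matching the local forms of Lemma \ref{lem:localcase} with those of Lemma \ref{lem:localal} --- is straightforward bookkeeping.
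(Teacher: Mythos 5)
Your proposal is correct and follows essentially the same route as the paper: apply Lemma \ref{lem:localcase} at each prime, assemble the local powers of $p$ into a single global scalar (which the paper compresses into the phrase ``after multiplying $\phi$ by a scalar''), take $e$ to be the exact divisor of $\delta$ supported on the primes where the Fricke alternative holds, and conclude via the local characterisation of Atkin--Lehner elements in Lemma \ref{lem:localal}. Your explicit verification that $\epsilon_p=0$ for all but finitely many $p$ is a detail the paper leaves implicit, but the argument is the same.
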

\begin{proof} It follows from Lemma
\ref{lem:localcase} that after multiplying
$\phi$ by a scalar we may assume that $\phi\in \Lambda_p^\ast$ for $p\nmid \delta$ and that
\eqref{eq:localcase} holds for all primes satisfying $p\divides \delta$. Let $e$ be the 
exact divisor of $\delta$ whose prime divisors are those for which the second alternative
in \eqref{eq:localcase} holds. Then it follows from Lemma \ref{lem:localal} that
$\phi$ represents an Atkin-Lehner element associated to $e$.
\end{proof}
\begin{remark} In \cite[Lemma 1]{MR611307} the author states a similar result as Proposition \ref{prop:iso} for the form
$q'(r,l,s)=l^2-\delta rs$ if \emph{$\delta$ is square free}. 
Remarkably, for this modified form the result
does not extend to non-square free $\delta$. 
To see this, observe that in this case we are interested in those
$\phi$ for which
\[
\frac{1}{\det \phi}
\begin{pmatrix}
d^2 &2cd & c^2\delta\\
bd& bc+ad& ac\delta\\
b^2/\delta &2ab/\delta & a^2
\end{pmatrix}\in M_3(\ZZ)
\]
For $\delta=4$ one checks that one may take
\[
\phi=
\begin{pmatrix}
a&2b_1\\
c_1/2&d
\end{pmatrix}
\]
for $a,b_1,c_1,d\in\ZZ$, $ad-b_1c_1=1$. Hence $\SO^+(q',\ZZ)$ is conjugate to $\PSL_2(\ZZ)$
rather than $\Gamma^+_0(4)$.
\end{remark}
\begin{remark} \label{rem:pm1}
One checks that \eqref{eq:isom2} restricts to an isomorphism
\[
\Aut^+(\Nscr(Y))/\{\pm 1\} \xrightarrow{\cong} \SO^+(q,\ZZ)
\]
so that combining this with \eqref{eq:iso2} we obtain an isomorphism
\[
\Aut^+(\Nscr(Y))/\{\pm 1\}  \xrightarrow{\cong} \Gamma^+_0(\delta)
\]
\end{remark}
\section{Appendix B: Understanding $\Aut^+ \H^\ast(Y)$}\label{sec:autH}

To complete our understanding of $\Aut\Dscr^b(Y)$ via Theorem \ref{thm:mainBB} 
we need to understand $\Aut^+ \H^*(Y,\ZZ)$. We first discuss some facts which are specific to the case $d=2$.  Recall that in the latter case there is a (unique) nontrivial automorphism $\iota \in \Aut(Y)$,  which is the covering involution (Proposition \ref{prop:d2}).
The following result shows that the automorphism $\iota$ is quite remarkable. 
\begin{lemma}[{\cite[Theorem 8.1]{BarbacoviKikuta}}] \label{lem:iotaprops} 
Let $Y$ be a K3 surface with Picard rank one and degree two.
One has:
\begin{enumerate} 
\item $\iota$ acts trivially on $\Stab^\dagger(Y)$;
\item \label{lem:iotaprops:it2}  $\iota^\ast$ is central in $\Aut \Dscr^b(Y)$.
\item  \label{lem:iotaprops:it3} If $S\in \Dscr^b(Y)$ is a spherical object then $\iota^\ast(S)\cong S$.
\end{enumerate}
\end{lemma}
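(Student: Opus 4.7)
The plan is to prove the three parts in the order stated, each using the previous. For (1), I begin by noting that $\iota^*$ acts as the identity on $\Nscr(Y)$ by Proposition \ref{prop:d2}, and therefore trivially on $\Pscr_0^+(Y)\subset\Nscr(Y)_\CC$. One also checks that $\iota^*\in\Aut^+\H^\ast(Y,\ZZ)$: it acts by $+1$ on the $\Nscr$-part and by $-1$ on the $T(Y)$-part of a positive $4$-plane of the form $P_\Nscr\oplus P_T$, with total determinant $+1$, so it preserves the distinguished orientation class. By Theorem \ref{thm:mainBB}, $\iota^*$ therefore preserves $\Stab^\dagger(Y)$. Since the central charge of a stability condition factors through $\Nscr(Y)$, the action of $\iota^*$ on $\Stab^\dagger(Y)$ commutes with the Galois cover $\pi\colon\Stab^\dagger(Y)\to\Pscr_0^+(Y)$ of Theorem \ref{thm:BK3} and consequently realises $\iota^*$ as a deck transformation. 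By Theorem \ref{thm:mainBB} this deck group equals $\Aut^0\Dscr^b(Y)$, and by Theorem \ref{thm:kawatani} it is isomorphic to $\ZZ\times F$ for a free group $F$, hence torsion-free. As $(\iota^*)^2=\id$, the corresponding deck transformation has order dividing $2$ and must therefore be trivial.

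For (2), I first observe that the image of $\iota^*$ in $\Aut^+\H^\ast(Y,\ZZ)$ is central: every element of $\Aut^+\H^\ast(Y,\ZZ)$ preserves the rational orthogonal decomposition $\Nscr(Y)_\QQ\oplus T(Y)_\QQ$, and $\iota^*$ equals $+1$ on the first summand and $-1$ on the second, a formula preserved under conjugation by any such element. Consequently, for every $\Phi\in\Aut\Dscr^b(Y)$, the commutator $[\Phi,\iota^*]$ maps to the identity in $\Aut^+\H^\ast(Y,\ZZ)$ and hence lies in $\Aut^0\Dscr^b(Y)$. By (1) it also acts trivially on $\Stab^\dagger(Y)$, and since $\Aut^0\Dscr^b(Y)$ acts freely on $\Stab^\dagger(Y)$ as the deck group of $\pi$, the commutator must be the identity.

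For (3), let $S$ be a spherical object. Because $\iota^*$ is trivial on $\Nscr(Y)$, we have $v(\iota^*(S))=v(S)$. Combining (2) with Lemma \ref{lm:twistconj} gives $T_{\iota^*(S)}=\iota^*T_S(\iota^*)^{-1}=T_S$, and Lemma \ref{lem:sphericalshift} then produces $n\in\ZZ$ with $\iota^*(S)\cong S[n]$. Applying $\iota^*$ once more yields $S\cong(\iota^*)^2(S)\cong S[2n]$; but $\Hom^i(S,S)$ vanishes for $i\notin\{0,2\}$, so an isomorphism $S\cong S[2n]$ would require nonvanishing of both $\Hom(S,S[2n])$ and its dual $\Hom(S,S[-2n])$, forcing $n=0$.

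The principal obstacle is part (1), where I must combine the preservation of $\Stab^\dagger(Y)$ by $\iota^*$ with the torsion-freeness of $\Aut^0\Dscr^b(Y)$ granted by Kawatani's Theorem \ref{thm:kawatani}; once this is settled, parts (2) and (3) follow quickly from the exact sequence \eqref{eq:mainBB} and the freeness of the deck-group action on $\Stab^\dagger(Y)$.
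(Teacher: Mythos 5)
Your proposal is correct and follows essentially the same route as the paper: (1) via $\iota^*$ acting trivially on $\Nscr(Y)$, hence as a deck transformation of $\Stab^\dagger(Y)\to\Pscr_0^+(Y)$, killed by the torsion-freeness of $\Aut^0\Dscr^b(Y)$ from Theorem \ref{thm:kawatani}; (2) via the commutator lying in $\Aut^0\Dscr^b(Y)$ and acting trivially on $\Stab^\dagger(Y)$; (3) via $T_{\iota^*(S)}=T_S$ and Lemma \ref{lem:sphericalshift}. The only (immaterial) deviation is the last step of (3), where you rule out a nonzero shift using $(\iota^*)^2=\id$ and the Ext-vanishing for spherical objects, while the paper simply notes that pullback along an automorphism preserves the degrees of the cohomology sheaves.
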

For convenience, we give a short self-contained proof. 
\begin{proof}
\begin{enumerate}
\item
Since $\iota$ acts trivially on $\Nscr(Y)$, it acts by a deck transformation
for $\Stab^\dagger(Y)/\Pscr^+_0(Y)$.
Hence it must be an element of $\Aut^0\Dscr^b(Y)$. This is impossible since $\Aut^0\Dscr^b(Y)$ is torsion free by Theorem \ref{thm:kawatani}.
\item Let $F\in\Aut \Dscr^b(Y)$. Then $F\iota^\ast F^{-1}$ acts trivially on $\Stab^\dagger(Y)$ and so $c:=F\iota^\ast F^{-1}\iota^{\ast,-1}$
also acts trivially on $\Stab^\dagger(Y)$. On the other hand $c$ acts trivially on $\H^\ast(Y,\ZZ)$
so  $c\in \Aut^0\Dscr^b(Y)$. By Theorem \ref{thm:BK3} this implies $c=\Id$.
\item By Lemma \ref{lm:twistconj} and (2) we have $T_S=\iota^\ast T_S \iota^{\ast, -1} =T_{\iota^\ast(S)}$ and hence by Lemma \ref{lem:sphericalshift} $\iota^\ast S\cong S[l]$
for some $l$. Since $\iota$ is an automorphism, the only possibility is $l=0$. \qedhere
\end{enumerate}
\end{proof}

Proposition \ref{prop:d2} should be contrasted with the following fact.
\begin{lemma} 
\label{lem:d2} Let $Y$ be a K3 surface with Picard rank 1 and $d\neq 2$. Then there is no element $\iota$ in $\Aut^+ H^\ast(Y,\ZZ)$ which acts as the identity on $\Nscr(Y)$ and as $-1$
on $T(Y)$.
\end{lemma}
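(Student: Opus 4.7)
The plan is to argue via the gluing of discriminant forms inside the unimodular Mukai lattice $\H^\ast(Y,\ZZ)$. Since $\Aut^+ \H^\ast(Y,\ZZ) \subset \Aut \H^\ast(Y,\ZZ)$, it suffices to show that no integral isometry of $\H^\ast(Y,\ZZ)$ exists with the prescribed action on $\Nscr(Y)$ and $T(Y) = \Nscr(Y)^\perp$ when $d \neq 2$.

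First I compute the discriminant group of $\Nscr(Y)$. With respect to the standard basis $(1,0,0)$, $(0,H,0)$, $(0,0,1)$ of $\Nscr(Y)$, the Mukai pairing has Gram matrix
\[
\begin{pmatrix} 0 & 0 & -1 \\ 0 & d & 0 \\ -1 & 0 & 0 \end{pmatrix}
\]
of determinant $-d$. A direct calculation of the dual lattice yields $A_{\Nscr(Y)} := \Nscr(Y)^\ast/\Nscr(Y) \cong \ZZ/d\ZZ$, generated by the class of $\tfrac{1}{d}(0,H,0)$.

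Second, since $\H^\ast(Y,\ZZ)$ is unimodular and contains $\Nscr(Y) \oplus T(Y)$ as a sublattice of finite index, Nikulin's theory of discriminant forms produces an anti-isometry
\[
\gamma \colon A_{\Nscr(Y)} \xrightarrow{\,\sim\,} A_{T(Y)}
\]
whose graph inside $A_{\Nscr(Y)} \oplus A_{T(Y)}$ coincides with the finite subgroup $\H^\ast(Y,\ZZ)/(\Nscr(Y) \oplus T(Y))$. In particular, $A_{T(Y)}$ is also cyclic of order $d$. An isometry $(\alpha, \beta)$ of $\Nscr(Y) \oplus T(Y)$ then extends (uniquely) to an integral isometry of $\H^\ast(Y,\ZZ)$ if and only if the induced actions $\bar\alpha$ on $A_{\Nscr(Y)}$ and $\bar\beta$ on $A_{T(Y)}$ satisfy $\gamma \bar\alpha = \bar\beta \gamma$.

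Applying this criterion with $\alpha = \mathrm{id}$ and $\beta = -\mathrm{id}$, the compatibility specialises to $\gamma(x) = -\gamma(x)$ for every $x \in A_{\Nscr(Y)}$, i.e.\ $2 \cdot A_{T(Y)} = 0$. Since $A_{T(Y)}$ is cyclic of order $d$ and the degree of a K3 surface is always a positive even integer, this forces $d = 2$. The only delicate point is the correct bookkeeping in the discriminant form formalism (specifically, that $A_{\Nscr(Y)}$ is cyclic of order exactly $d$ and that the extension criterion takes the stated form); once these standard facts are in place the argument reduces to a one-line divisibility, giving the desired contradiction for $d \neq 2$.
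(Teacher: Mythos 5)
Your argument is correct, but it takes a genuinely different route from the paper. You work with the full Mukai lattice and invoke Nikulin's gluing theory: you compute $A_{\Nscr(Y)}\cong\ZZ/d\ZZ$ (correct, since $\Nscr(Y)\cong U\oplus\langle d\rangle$), use the anti-isometry $\gamma\colon A_{\Nscr(Y)}\to A_{T(Y)}$ coming from unimodularity of $\H^\ast(Y,\ZZ)$, and observe that an isometry restricting to $(\mathrm{id},-\mathrm{id})$ on $\Nscr(Y)\oplus T(Y)$ must satisfy $\gamma=-\gamma$, forcing $d\mid 2$. The paper instead stays inside $\H^2(Y,\ZZ)$: it notes that $\iota$ preserves the grading, so $-\iota$ restricted to $\H^2(Y,\ZZ)$ is the reflection $y\mapsto y-\frac{2\langle H,y\rangle}{\langle H,H\rangle}H$, whose integrality requires $d\mid 2\langle H,y\rangle$ for all $y$; unimodularity of $\H^2(Y,\ZZ)$ and primitivity of $H$ then give $d\mid 2$. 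The two proofs are really the same divisibility obstruction seen from two angles: the paper's version is elementary and self-contained (no appeal to discriminant-form machinery), while yours is the ``structural'' formulation and generalises immediately, e.g.\ to higher Picard rank, where the same criterion says $(\mathrm{id},-\mathrm{id})$ glues to an integral isometry if and only if $A_{T(Y)}$ is $2$-torsion. The standard facts you defer to (cyclicity of $A_{\Nscr(Y)}$ of order exactly $d$, and the graph/extension criterion for finite-index unimodular overlattices) are indeed standard and correctly stated, so there is no gap.
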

\begin{proof} Assume $\iota$ exists. It follows from the definition that $\iota$ preserves the grading on $H^\ast(Y,\ZZ)$. In particular it acts  on $H^2(Y,\ZZ)$ such
that $\iota(H)=H$ and $\iota \vert_{T(Y)} =-1$. It follows that $-\iota$ is a reflection on $H^2(Y,\ZZ)$ corresponding to $H$ and so it is given by the
standard formula
\[
(-\iota)(y)=y-\frac{2\langle H,y\rangle}{\langle H,H\rangle}H
\]
In particular we have $d \divides 2\langle H,y\rangle$ for all $y\in H^2(Y,\ZZ)$. There can be no $n\neq 1\in \NN$ which divides $\langle H,y\rangle $ for all $y$ since by unimodularity this would imply $n \divides H$,  which is impossible because $H$ is primitive.
Hence $d \divides 2$ (and therefore $d=2$).
\end{proof}
Recall the following
\begin{proposition}[{\cite[Corollary 3.3.5]{Huybrechts}}] \label{prop:TY}
Let $Y$ be  a K3 surface of odd Picard rank.
The only Hodge isometries of the transcendental lattice $T(Y)$ are $\pm \id$. 
\end{proposition}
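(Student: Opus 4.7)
Plan: Let $\phi$ be a Hodge isometry of $T = T(Y)$. The strategy is first to show every eigenvalue of $\phi$ is a root of unity, then to use the transcendentality of $T$ together with the odd-rank hypothesis to force $\phi = \pm \id$.

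Since $\phi$ preserves the Hodge decomposition, it preserves the orthogonal real splitting $T_\RR = P \oplus N$, where $P = (H^{2,0} \oplus H^{0,2}) \cap T_\RR$ is positive definite of real dimension $2$ and $N = H^{1,1}(Y,\RR) \cap T_\RR$ is negative definite of dimension $20 - \rho$. On each definite summand $\phi$ acts as an orthogonal transformation of a definite form, so its eigenvalues lie on the unit circle; combined with $\phi$ having integer characteristic polynomial, Kronecker's theorem forces all eigenvalues of $\phi$ to be roots of unity, and $\phi$ is semisimple.

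Let $\lambda$ be the eigenvalue of $\phi$ on the line $H^{2,0}$, say of order $n$, and set $W = \ker \Phi_n(\phi) \subset T_\QQ$. This is a rational $\phi$-invariant subspace whose complexification contains both $H^{2,0}$ and (by complex conjugation) $H^{0,2}$. The key computation is to show that its orthogonal complement $W^\perp$ satisfies $W^\perp_\CC \subset H^{1,1}(Y,\CC)$: for any $v \in W^\perp_\CC$ and any generator $\sigma$ of $H^{2,0}$, the conditions $\langle v, \sigma\rangle = \langle v, \bar\sigma\rangle = 0$ together with the Hodge--Riemann relations (which force $\langle H^{1,1}, H^{2,0}\rangle = \langle H^{1,1}, H^{0,2}\rangle = 0$ while the pairing $H^{2,0} \times H^{0,2} \to \CC$ is non-degenerate) eliminate the $(2,0)$- and $(0,2)$-components of $v$. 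Now the transcendentality of $T$ gives $T_\QQ \cap H^{1,1}(Y,\CC) = T_\QQ \cap \NS(Y)_\QQ = 0$ (by Lefschetz's $(1,1)$-theorem), so $W^\perp = 0$; by non-degeneracy of the Hodge form on $T$, this yields $W = T_\QQ$. Letting $m$ denote the common multiplicity of each primitive $n$-th root of unity as an eigenvalue of $\phi$, we conclude $22 - \rho = \varphi(n) \cdot m$. For $n \geq 3$ the totient $\varphi(n)$ is even, contradicting the assumption that $\rho$ is odd. Hence $\lambda \in \{\pm 1\}$.

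For the final step, assume $\lambda = 1$ (the case $\lambda = -1$ follows by replacing $\phi$ with $-\phi$). Consider the rational subspace $W' = \bigoplus_{\mu \neq 1} V_\mu \subset T_\CC$, the sum of all eigenspaces of $\phi$ for eigenvalues $\mu \neq 1$; since $\phi$ acts as the identity on both $H^{2,0}$ and $H^{0,2}$, each such $V_\mu$ intersects these trivially and so is contained in $H^{1,1}$. Hence $W' \cap T_\QQ \subset T_\QQ \cap H^{1,1}(Y,\CC) = 0$, giving $W' = 0$, and by semisimplicity $\phi = \id$. The main conceptual point throughout is the orthogonality computation that places the $\phi$-invariant rational subspaces $W^\perp$ and $W'$ inside $H^{1,1}$, after which transcendentality of $T$ combined with the parity of $\varphi(n)$ does all the work.
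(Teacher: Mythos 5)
The paper offers no proof of this proposition; it is simply quoted from Huybrechts (Corollary 3.3.5), so there is no internal argument to compare against. Your proof is correct and is essentially the standard one underlying that reference: eigenvalues are roots of unity by definiteness of the two Hodge summands plus Kronecker, the cyclotomic kernel $\ker\Phi_n(\phi)$ is all of $T_\QQ$, and then $\varphi(n)\mid 22-\rho$ forces $n\le 2$ when $\rho$ is odd, after which semisimplicity and the vanishing of $T_\QQ\cap H^{1,1}$ give $\phi=\pm\id$. The one genuinely self-contained touch is that instead of quoting the irreducibility of the transcendental Hodge structure you re-derive the needed special case on the spot, via the orthogonality computation showing $W^\perp_\CC\subset H^{1,1}$ and hence $W^\perp\subset T_\QQ\cap \NS(Y)_\QQ=0$; this is a clean and complete substitute. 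The only steps left implicit, both routine, are that the characteristic polynomial of $\phi$ is then a power of $\Phi_n$ (whence the common multiplicity $m$) and that $W'=\bigoplus_{\mu\neq1}V_\mu$ is defined over $\QQ$ (Galois-stability of the eigenvalue set, or $W'=\operatorname{im}(\phi-\id)$ by semisimplicity).
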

We can now make the following observation.
\begin{lemma}
\label{lem:HvsN}
Let $Y$ be a K3 surface with Picard rank $1$.
  Let $\Aut^+ \Nscr(Y)$ be the group of isometries of $\Nscr(Y)$
  preserving the orientation of the positive $2$-planes in $\Nscr(Y)_\RR$. Then the restriction map
\begin{equation}
\label{eq:restriction}
\Aut^+H^*(Y,\ZZ)\r \Aut^+\Nscr (Y)
\end{equation}
is well-defined. Moreover it is injective for $d>2$ and for $d=2$ its kernel is given by the action of the involution $\iota$ (see Lemma \ref{prop:d2})
\end{lemma}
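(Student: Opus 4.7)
I will split the argument into three steps: well-definedness of the restriction map, kernel analysis, and specialisation to $d>2$ and $d=2$.

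\emph{Well-definedness.} Any $\phi \in \Aut^+ \H^\ast(Y,\ZZ)$ is a Hodge isometry, so it preserves the intersection of the $(1,1)$-part of the Mukai Hodge structure with $\H^\ast(Y,\ZZ)$, which is precisely $\Nscr(Y) = \H^0 \oplus \Pic(Y) \oplus \H^4$. For the orientation condition I would use the $\phi$-stable orthogonal decomposition $\H^\ast(Y,\RR) = \Nscr(Y)_\RR \oplus T(Y)_\RR$. A positive $4$-plane in $\H^\ast(Y,\RR)$ can be chosen as an orthogonal sum of a positive $2$-plane in $\Nscr(Y)_\RR$ and the positive $2$-plane in $T(Y)_\RR$ spanned by $\{\Re\sigma, \Im\sigma\}$ for $\sigma$ a holomorphic $2$-form. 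Since any Hodge isometry preserves this canonical orientation on $T(Y)_\RR$, preservation of the orientation on positive $4$-planes in $\H^\ast(Y,\RR)$ is equivalent to preservation of the orientation on positive $2$-planes in $\Nscr(Y)_\RR$. This gives the well-defined map \eqref{eq:restriction}.

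\emph{Kernel analysis.} Let $\phi$ lie in the kernel of \eqref{eq:restriction}. Then $\phi|_{\Nscr(Y)} = \Id$ and $\phi$ preserves $T(Y) = \Nscr(Y)^\perp$. The restriction $\phi|_{T(Y)}$ is a Hodge isometry of $T(Y)$, so by Proposition \ref{prop:TY} it equals $\pm \Id$. If $\phi|_{T(Y)} = \Id$, then $\phi$ is trivial on the finite-index sublattice $\Nscr(Y) \oplus T(Y) \subset \H^\ast(Y,\ZZ)$, and hence trivial on all of $\H^\ast(Y,\ZZ)$ by uniqueness of the $\QQ$-linear extension. If $\phi|_{T(Y)} = -\Id$, then $\phi$ acts as the identity on $\Nscr(Y)$ and as $-1$ on $T(Y)$, which by Lemma \ref{lem:d2} is impossible whenever $d \neq 2$. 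This proves injectivity when $d > 2$.

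\emph{The case $d = 2$.} Proposition \ref{prop:d2} supplies the covering involution $\iota \in \Aut(Y)$, whose pullback $\iota^\ast$ acts as the identity on $\Nscr(Y)$ and as $-1$ on $T(Y)$. Being the pullback by a holomorphic automorphism, $\iota^\ast$ automatically lies in $\Aut^+ \H^\ast(Y,\ZZ)$ and is a non-trivial element of the kernel of \eqref{eq:restriction}. Combined with the preceding kernel analysis (which bounds the kernel by $\ZZ/2\ZZ$), this identifies the kernel as $\{\Id, \iota^\ast\}$, as required. The only genuinely delicate point in the whole argument is the orientation compatibility in the first step; the rest reduces quickly to Propositions \ref{prop:d2}, \ref{prop:TY} and Lemma \ref{lem:d2}.
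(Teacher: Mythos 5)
Your proof is correct and follows essentially the same route as the paper's: decompose $\H^\ast(Y,\RR)$ orthogonally into $\Nscr(Y)_\RR\oplus T(Y)_\RR$, control the action on $T(Y)$ (the paper quotes Proposition \ref{prop:TY} to see the action there is $\pm\Id$ and hence orientation-preserving on the positive $2$-plane, while you use the period plane directly for the orientation step and Proposition \ref{prop:TY} for the kernel), and finish with Lemma \ref{lem:d2} and Proposition \ref{prop:d2}. Your write-up is in fact slightly more explicit than the paper's, spelling out why a kernel element acting as $+\Id$ on $T(Y)$ must be the identity and why $\iota^\ast$ lies in $\Aut^+\H^\ast(Y,\ZZ)$, both of which the paper leaves implicit.
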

\begin{proof}
An element of $\Aut^+H^*(Y,\ZZ)$ acts on the transcendental lattice $T(Y)$ by $\pm \Id$, according to Proposition \ref{prop:TY}.
Since $\pm \id$ does not change the orientation of the positive 2-planes in $T(Y)_\RR$, the map \eqref{eq:restriction} is indeed well-defined.

If $\phi$ is a non-trivial element in the kernel of \eqref{eq:restriction} then it is the identity on $\Nscr(Y)$, and it acts as $-1$ on $T(Y)$. Invoking 
Proposition \ref{prop:d2} and Lemma \ref{lem:d2} finishes the proof.
\end{proof}
Our next aim is to characterise the image of \eqref{eq:restriction}. This is tackled by the following result.
 \begin{proposition}[{\cite[Theorem 7.1]{Dolgachev}}] \label{prop:imHN}
 The image of the map 
\[
\Aut^+ H^*(Y, \ZZ) \to \Aut^+ \Nscr(Y) \to \SO^+(q,\ZZ) \cong \Gamma_0^+(\delta)
\]
is equal to $F(\delta)$. 
\end{proposition}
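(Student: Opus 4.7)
The plan is to characterize which elements of $\Aut^+\Nscr(Y)/\{\pm 1\}\cong \Gamma_0^+(\delta)$ (cfr.\ Proposition \ref{prop:iso} and Remark \ref{rem:pm1}) arise by restriction from $\Aut^+ \H^*(Y,\ZZ)$. This is a lattice extension problem which I will resolve using Nikulin's theory of discriminant forms.

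First I will set up the extension criterion. The Mukai lattice $\H^*(Y,\ZZ)$ is even unimodular of signature $(4,20)$ and contains the orthogonal direct sum $\Nscr(Y)\oplus T(Y)$ as a primitive finite-index sublattice; the overlattice is encoded by a gluing isomorphism $A_\Nscr\xrightarrow{\sim} A_T$ between discriminant groups reversing discriminant quadratic forms. By the standard extension theorem of Nikulin, an isometry $\phi\in\Aut\Nscr(Y)$ extends to a Hodge isometry of $\H^*(Y,\ZZ)$ if and only if the induced map $\phi^\sharp\in\Aut A_\Nscr$ corresponds, under the gluing, to the discriminant action of some Hodge isometry of $T(Y)$. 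By Proposition \ref{prop:TY}, the Hodge isometries of $T(Y)$ are exactly $\pm\id$, which induce $\pm\id$ on $A_T$. Hence $\phi$ extends if and only if $\phi^\sharp\in\{\pm\id\}$. Moreover, since $-\id$ acts as a rotation of the $2$-dimensional positive subspace of $T(Y)_\RR$, it preserves orientation there, so the extension lies in $\Aut^+\H^*(Y,\ZZ)$ precisely when $\phi\in\Aut^+\Nscr(Y)$.

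Next I will compute $\phi^\sharp$ explicitly. In the standard basis the Gram matrix of $\Nscr(Y)$ is
\[
\begin{pmatrix} 0 & 0 & -1 \\ 0 & 2\delta & 0 \\ -1 & 0 & 0 \end{pmatrix},
\]
so $\Nscr(Y)\cong U\oplus\langle 2\delta\rangle$ and $A_\Nscr\cong\ZZ/2\delta\ZZ$ is generated by the class of $e_1^\ast:=\tfrac{1}{2\delta}(0,1,0)$. For an Atkin-Lehner element $\phi=\begin{pmatrix}ae & b\\ c\delta & de\end{pmatrix}$ associated to an exact divisor $e$ of $\delta$, the relation $\det\phi=e$ is equivalent to $aed-bc(\delta/e)=1$. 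Applying $\Phi/\det\phi$ from formula \eqref{eq:phiformula} to $e_1^\ast$, the middle coordinate becomes $(bc\delta+ade^2)/(2\delta e)$; using $bc\delta=e^2ad-e$, this simplifies to $(2aed-1)/(2\delta)$. Hence $\phi^\sharp$ is multiplication by $2aed-1$ on $\ZZ/2\delta\ZZ$.

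Finally I will determine when $2aed-1\equiv\pm 1\pmod{2\delta}$, equivalently $aed\equiv 0$ or $1\pmod\delta$. Since $\gcd(e,\delta/e)=1$, the Chinese Remainder Theorem gives $\ZZ/\delta\ZZ\cong\ZZ/e\ZZ\times\ZZ/(\delta/e)\ZZ$; the component of $aed$ modulo $e$ is clearly zero, and the relation $aed-bc(\delta/e)=1$ forces $aed\equiv 1\pmod{\delta/e}$. Consequently $aed\equiv 1\pmod\delta$ requires $e=1$ (yielding $\phi\in\Gamma_0(\delta)$ and $\phi^\sharp=+\id$), while $aed\equiv 0\pmod\delta$ requires $\delta/e=1$, i.e.\ $e=\delta$ (yielding $\phi\in s_\delta\Gamma_0(\delta)$ and $\phi^\sharp=-\id$). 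Since $F(\delta)=\Gamma_0(\delta)\sqcup s_\delta\Gamma_0(\delta)$, the image of $\Aut^+\H^*(Y,\ZZ)$ in $\Gamma_0^+(\delta)$ is precisely $F(\delta)$. The main technical step is the clean invocation of Nikulin's extension theorem together with the orientation bookkeeping; the final arithmetic is then a routine CRT computation.
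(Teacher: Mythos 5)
Your proposal is correct, and the arithmetic at its heart (the congruence forcing $e=1$ or $e=\delta$) is exactly the computation the paper performs; the difference is in how the extension problem is set up. The paper works entirely by hand inside $\H^2(Y,\ZZ)$: it writes $\Phi$ as a $3\times 3$ block matrix on $\H^0\oplus\Pic(Y)\oplus\H^4$, extends the off-diagonal entries via $\langle H,-\rangle$, and for the middle entry uses unimodularity of $\H^2(Y,\ZZ)$ to write any candidate extension as $x\mapsto x+\alpha\langle H,x\rangle H$ with $\alpha\in\ZZ$, which yields $2\delta\mid\beta\mp 1$ for $\beta=\tfrac{\delta}{e}bc+ead$. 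You instead package the whole question as a Nikulin gluing problem for the primitive orthogonal pair $\Nscr(Y)\oplus T(Y)$ inside the unimodular Mukai lattice, reduce to $\phi^\sharp=\pm\id$ on $A_{\Nscr}\cong\ZZ/2\delta\ZZ$ via Proposition \ref{prop:TY}, and compute that $\phi^\sharp$ is multiplication by $2aed-1$. Your route is more conceptual and makes the structure of the obstruction transparent (it lives in a cyclic group of order $2\delta$), at the cost of importing Nikulin's extension theorem as a black box; the paper's route is self-contained and elementary but more computational. Two small points you leave implicit: (i) to conclude that $\phi^\sharp$ is multiplication by $2aed-1$ you should either check that the first and third coordinates of $(\Phi/\det\phi)(e_1^\ast)$ are integral (they are: $cd$ and $ab$) or note that any automorphism of the cyclic group $A_{\Nscr}$ is multiplication by a unit, so the middle coordinate alone determines it; (ii) the extension must also be checked to be a Hodge isometry, which is immediate since it acts as $\pm\id$ on $T(Y)\supset$ the lattice carrying $\H^{2,0}$ — you implicitly use this, and your orientation bookkeeping matches the paper's Lemma \ref{lem:HvsN}.
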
 
\begin{proof}  
Recall that  $\Nscr(Y)\cong \H^0(Y,\ZZ)\oplus  \Pic(Y)\oplus \H^4(Y,\ZZ)$.
After identifying $\H^0(Y,\ZZ)\cong \ZZ\cong \H^4(Y,\ZZ)$, an element $\Phi\in \Aut^+(\Nscr(Y))$
can be identified with an element of 
\begin{equation}
\label{eq:smallmatrix}
\End(\ZZ\oplus  \Pic(Y)\oplus \ZZ)
=
\begin{pmatrix}
\ZZ & \Pic(Y)^\vee & \ZZ\\
\Pic(Y) & \End(\Pic(Y), \Pic(Y)) & \Pic(Y)\\
\ZZ & \Pic(Y)^\vee & \ZZ
\end{pmatrix}
\end{equation}
 Our task is to understand when such an element can be lifted to an element of
\begin{equation}
\label{eq:bigmatrix}
\End(\H^0(Y,\ZZ) \oplus \H^2(Y,\ZZ)\oplus \H^4(Y,\ZZ))=
\begin{pmatrix}
\ZZ & \H^2(Y,\ZZ)^\vee & \ZZ\\
\H^2(Y,\ZZ) & \End( \H^2(Y,\ZZ), \H^2(Y,\ZZ)) & \H^2(Y,\ZZ)\\
\ZZ & \H^2(Y,\ZZ)^\vee & \ZZ
\end{pmatrix}
\end{equation}
which acts as $\pm 1$ on $T(Y)$. Note that by Lemma \ref{lem:HvsN} such an extension is unique for $d>2$ and for $d=2$ there is a $\ZZ/2\ZZ$ ambiguity which is inconsequential for what follows.

The only troublesome part of \eqref{eq:bigmatrix} is the middle column, since filling its entries involves extending a map with source $\Pic(Y)$ 
to a map with source $\H^2(Y,\ZZ)$ and $\H^2(Y,\ZZ)$ is not the direct sum of $\Pic(Y)$ and $T(Y)$  (it cannot be since $\H^2(Y,\ZZ)$ is unimodular).
Now recall
\begin{enumerate}
\item The kernel of $\Aut^+\Nscr(Y)\r \SO^+(q,\ZZ)$ is $\{\pm 1\}$ by Remark \ref{rem:pm1}.
\item The identification $\SO^+(q,\ZZ)\cong \Gamma^+_0(\delta)$ is given by an explicit formula \eqref{eq:phiformula}.
\item By definition $\Gamma^+_0(\delta)$ is the collection of (images) of Atkin-Lehner elements in $\PSL_2(\RR)$ (see Definition \ref{def:AL}). 
\end{enumerate}
Hence our mission is to determine when an Atkin-Lehner element can be lifted to $\Aut^+ \H^\ast(Y,\ZZ)$.
Let 
\[
\begin{pmatrix}
ea & b \\ \delta c & ed
\end{pmatrix} \in \Gamma_0^+(\delta)
\]
be an arbitrary Atkin-Lehner element.  Then up to sign, and after identifying $\Pic(Y)=\ZZ H$,
the corresponding middle column of \eqref{eq:smallmatrix} is
\[
\begin{pmatrix}
(2 \delta cd) H^\vee \\ \left({\displaystyle\frac{\delta}{e}} bc + ead)(H^\vee \otimes H\right) \\ (2 \delta ab )H^\vee
\end{pmatrix}
\] 
where $H^\vee$ denotes the dual basis element to $H$.
Since $\langle H,  H \rangle = 2 \delta$,  
the map $(2\delta cd)H^\vee : \ZZ H\r \ZZ$
extends to $\H^2(Y,\ZZ)$ to a map which is zero on $T(Y)$ via $cd \langle H,  - \rangle$.  
Similarly,   $(2 \delta ab)H^\vee$ extends to $\H^2(Y,\ZZ)$ via $ab \langle H,  - \rangle$.  

It remains to consider the middle entries of \eqref{eq:smallmatrix} and \eqref{eq:bigmatrix}. We analyze when an element of $\beta (H^\vee\otimes H)\in \ZZ$
extends to map $f:\H^2(Y,\ZZ)\r \H^2(Y,\ZZ)$ which is $\pm \Id$ on $T(Y)$. We assume first that it extends as $+\Id$.
 Then $f$ has the form
$x \mapsto x + \alpha \langle H, x \rangle H$ where $\alpha\in \ZZ$ because of the unimodularity of $\H^2(Y,\ZZ)$.

Expressing $f(H)=\beta H$ yields
$1 + 2\delta \alpha = \beta$, so $(2\delta) | (\beta -
1)$. Applying this with $\beta = \frac{\delta}{e} bc + ead$ and recalling
that $ead - \frac{\delta}{e} bc = 1$ by the definition of an
Atkin-Lehner element, we see that this implies $e | bc$, but this
contradicts $ead - \frac{\delta}{e} bc = 1$ unless $e=1$.  Similarly,
$f$ extends to $\H^2(Y,\ZZ)$ as $-1$ on $T(Y)$ if and only if
$e = \delta$.
\end{proof} 
\begin{corollary} $\Aut^+(\H^\ast(Y,\ZZ))\r \Aut^+(\Nscr(Y))$ is surjective if and only if $\delta$ is a prime power.
\end{corollary}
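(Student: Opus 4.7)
The plan is to reduce surjectivity to comparing the Fricke group $F(\delta)$ with the Atkin-Lehner group $\Gamma^+_0(\delta)$, and then quote the cokernel computation recalled in \S\ref{sec:fuchsian}.

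First, by Remark \ref{rem:pm1} the map $\Aut^+\Nscr(Y)\r \Gamma^+_0(\delta)$ is surjective with kernel $\{\pm 1\}$. Observe that $-\Id\in \Aut^+\Nscr(Y)$ lifts to $-\Id\in \Aut^+\H^\ast(Y,\ZZ)$: this element is trivially a Hodge isometry, and because it acts with determinant $(-1)^4=1$ on any positive $4$-plane in $\H^\ast(Y,\RR)$, it preserves the orientation. Given this, a direct diagram chase along
\[
\Aut^+\H^\ast(Y,\ZZ)\r \Aut^+\Nscr(Y)\r \Gamma^+_0(\delta)
\]
shows that the first arrow is surjective if and only if the composite is surjective: any $g\in \Aut^+\Nscr(Y)$ can be corrected by $-\Id$ after lifting its image in $\Gamma^+_0(\delta)$.

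Next I would apply Proposition \ref{prop:imHN}, which identifies the image of this composite as $F(\delta)\subseteq \Gamma^+_0(\delta)$. Hence surjectivity of $\Aut^+\H^\ast(Y,\ZZ)\r \Aut^+\Nscr(Y)$ is equivalent to the equality $F(\delta)=\Gamma^+_0(\delta)$.

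Finally, from \S\ref{sec:fuchsian} we have
\[
F(\delta)/\Gamma_0(\delta)\cong \ZZ/2\ZZ, \qquad \Gamma^+_0(\delta)/\Gamma_0(\delta)\cong (\ZZ/2\ZZ)^c,
\]
where $c$ is the number of distinct prime divisors of $\delta$. The inclusion $F(\delta)\subseteq \Gamma^+_0(\delta)$ is therefore an equality exactly when $c\le 1$, i.e.\ when $\delta$ is a prime power (including $\delta=1$, for which $c=0$). This yields the claimed equivalence. There is no real obstacle in this proof; the only point meriting care is checking that $-\Id$ lies in $\Aut^+\H^\ast(Y,\ZZ)$, which is the elementary determinant observation above.
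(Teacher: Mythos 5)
Your proof is correct and follows the same route as the paper: reduce surjectivity to the equality $F(\delta)=\Gamma^+_0(\delta)$ via the quotients by $\{\pm 1\}$ (Remark \ref{rem:pm1} and Proposition \ref{prop:imHN}), then compare indices over $\Gamma_0(\delta)$. You merely spell out the one point the paper leaves implicit, namely that $-\Id$ lies in $\Aut^+\H^\ast(Y,\ZZ)$ so that passing to the $\pm 1$ quotients loses nothing; that check is correct.
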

\begin{proof} We have surjectivity if and only if $\Aut^+(\H^\ast(Y,\ZZ))/\{\pm 1\}\r \Aut^+(\Nscr(Y))/\{\pm 1\}$ is surjective.
I.e. if an only if $F(\delta)=\Gamma^+_0(\delta)$ which is equivalent to $\delta$ being a prime power.
\end{proof}
We now obtain a version of \eqref{eq:mainBB} in which everything is explicit
\begin{theorem}\label{thm:iotaseq} Let $N$ be the subgroup of $\Aut \Dscr^b(Y)$ generated by
\[
\begin{cases}
[1] &\text{if $d\neq 2$}\\
[1],\iota^\ast &\text{if $d\neq 2$}
\end{cases}
\]
(recall that by Lemma \ref{lem:iotaprops}\eqref{lem:iotaprops:it2} $\iota^\ast$ is central)
and let $\bar{N}$ be its image in $\Aut^+ \H^\ast(Y,\ZZ)$. I.e.
\[
\bar{N}=\begin{cases}
\{\pm 1\} &\text{if $d\neq 2$}\\
\{\pm 1, \pm \iota\} &\text{if $d\neq 2$}
\end{cases}
\]

There is a short exact sequence
\begin{equation}
\label{eq:togamma}
1\r \Aut^0\Dscr^b(Y)/[2] \r \Aut\Dscr^b(Y)/N\r F(\delta)\r 1
\end{equation}
\end{theorem}
\begin{proof}
From Theorem \ref{thm:mainBB} we obtain in fact an exact sequence
\[
1\r \Aut^0\Dscr^b(Y)/[2] \r \Aut\Dscr^b(Y)/N\r \Aut^+\H^\ast(Y,\ZZ)/\bar{N}\r 1
\]
Now we observe that there is an injection $\Aut^+ \H^\ast(Y,\ZZ) /\bar{N}\hookrightarrow \Aut^+ \Nscr(Y) /\{\pm1\}$ (Lemma \ref{lem:HvsN}) and an isomorphism
$\Aut^+ \Nscr(Y) /\{\pm 1\}\cong \Gamma^+_0(\delta)$ (Remark \ref{rem:pm1}). The image under the composition of $\Aut^+ H^\ast(Y,\ZZ)$ was identified
with $F(\delta)$ in Proposition \ref{prop:imHN}.
\end{proof}

\bibliography{bibs}
\bibliographystyle{amsplain}
\end{document}